\newcommand{\ud}{\mathrm{d}}
\newcommand{\dev}{\partial}
\newcommand{\R}{\mathbb{R}}
\newcommand{\Z}{\mathbb{Z}}
\newcommand{\A}{\mathcal{A}}
\newcommand{\M}{\mathcal{M}}
\newcommand{\X}{\mathfrak{X}}
\newcommand{\F}{\mathcal{F}}
\newcommand{\mdev}{\boldsymbol{\dev}}
\newcommand{\mlambda}{\boldsymbol{\lambda}}
\newcommand{\mmu}{\boldsymbol{\mu}}
\newcommand{\pc}{\circlearrowleft}
\renewcommand{\L}{\mathcal{L}}
\newcommand{\N}{\mathscr{N}}
\renewcommand{\vec}[1]{\mathbf{#1}}
\numberwithin{equation}{section}
\newtheorem{thm}{Theorem}
\newtheorem{lemma}{Lemma}
\theoremstyle{definition}
\newtheorem{deff}{Definition}
\title{On deformations of multidimensional Poisson brackets of hydrodynamic type}
\author{M.~Casati}
\affil{Scuola Internazionale Superiore di Studi Avanzati\\via Bonomea 265, 34136 Trieste (ITALY)}
\begin{document}
\maketitle
\begin{abstract}
 The theory of Poisson Vertex Algebras (PVAs) \cite{BdSK09} is a good framework to treat Hamiltonian partial differential equations. A PVA consists of a pair $(\A,\{\cdot_\lambda\cdot\})$ of a differential algebra $\A$ and a bilinear operation called the $\lambda$-bracket. We extend the definition to the class of algebras $\A$ endowed with $d\geq1$ commuting derivations. We call this structure a \emph{multidimensional PVA}: it is a suitable setting to study Hamiltonian PDEs with $d$ spatial dimensions. We apply this theory to the study of symmetries and deformations of the Poisson brackets of hydrodynamic type for $d=2$.
\end{abstract}
\paragraph*{Keywords} Hamiltonian Operator, Hydrodynamic Poisson Bracket, Poisson Vertex Algebra 
\paragraph*{MSC} 37K05 (primary), 37K25, 17B80
\section{Introduction}\label{sec:intro}
The main goal of this paper is to extend the formalism of Poisson Vertex Algebras (PVAs) in order to study Hamiltonian evolutionary PDEs with several spatial dimensions. Such PVAs provide efficient techniques to characterize Hamiltonian operators of evolutionary PDEs. In particular, we focus on multidimensional Poisson brackets of hydrodynamic type and study their symmetries and deformations, which correspond to the 1- and 2-cocycles in the Poisson--Lichnerowicz complex \cite{L77,dGMS05}. Let us first briefly present some basic facts in the theory of Poisson brackets of hydrodynamic type.
\subsection{Poisson brackets of hydrodynamic type and their deformations}
The notion of Poisson bracket of hydrodynamic type has been introduced in the early 1980s by Dubrovin and Novikov \cite{DN83} in order to characterize the Hamiltonian structure of a class of equations that describe systems such as ideal fluids with internal degrees of freedom. Poisson brackets (also called Poisson structures or Hamiltonian operators by different authors) of this form can be used to describe, for instance, Euler's equation \cite{N82}. The Hamiltonian structure is, in these cases, given in term of a first order differential operator
\begin{equation}\label{eq:hyPB_intro}
 P^{ij}(u(x))=g^{ij}(u(x))\frac{\ud}{\ud x}+b^{ij}_k(u)u^k_x
\end{equation}
for the maps $u^i(x)\in\M\colon S^1\to M$. $P$ is a well defined skewsymmetric operator if and only if the functions $g^{ij}(u)$ and $b^{ij}_k(u)$ are respectively the components  of a (pseudo)Riemannian contravariant metric and the contravariant Christoffel symbols of a compatible connection. Finally, $P$ is an Hamiltonian operator if and only if the connection has no torsion and the metric is flat. This means that there exists a coordinate system $\{u^i\}$ in which the Poisson structure is constant, the so-called \emph{Darboux coordinates}. For the 1-dimensional case, they are obviously the flat coordinates of the contravariant metric $g$.

Hydrodynamic Poisson brackets for maps $u(\vec{x})$, $\vec{x}=\{x^1,\ldots,x^d\}$ have been studied for several years \cite{DN84,M88}. They are defined by a family of $d$ maps $g^{ij\alpha}(u(\vec{x}))$ and $b^{ij\alpha}_k(u(\vec{x})$; in term of a differential operator, they have form
\begin{equation}\label{eq:mHYPB_intro}
P^{ij}(u(\vec{x}))=g^{ij\alpha}(u(\vec{x}))\frac{\ud}{\ud x^\alpha}+b^{ij\alpha}_k(u(\vec{x}))\left(\dev_\alpha u^k \right)
\end{equation}
In order for $P^{ij}$ to be a Hamiltonian operator, the functions $g^{ij\alpha}$ must be, for each $\alpha=1,\ldots d$, the components of a flat contravariant metric, while $b^{ij\alpha}_k$ the corresponding Christoffel symbols. These functions, moreover, must satisfy quite a large set of compatibility conditions, that we present in Theorem \ref{thm:Mokhov} \cite{M88}.
The main difference between the multidimensional brackets of hydrodynamic type and the monodimensional ones is that, despite each metric is flat, there do not necessarily exist Darboux coordinates for them: each metric can be constant, in general, for a different system of coordinates $\{u^i(\vec{x}z)\}_{i=1}^n$.

Under certain nondegeneracy assumptions there exist coordinate systems in which the multidimensional Poisson brackets of hydrodynamic type are at most linear \cite{DN84}. The Lie--Poisson bracket, namely the one associated to the Lie algebra of vector fields on a compact manifold \cite{N82}, is linear. It has the form
\begin{equation}\label{eq:LPhPB_intro}
 P_{ij}(p(\vec{x}))=p_i(\vec{x})\frac{\dev}{\dev x^j}+p_j(\vec{x})\frac{\dev}{\dev x^i}+\frac{\dev p_j(\vec{x})}{\dev x^i}
\end{equation}
for $p\colon M\to\Omega^1(M)$, $\dim M=d$. 

There exist Hamiltonian operators on the space $\M$ which are of order greater than one: one of the first examples to be discovered and probably the most celebrated one is the second Hamiltonian structure of KdV equation \cite{M78}
\begin{equation}\label{eq:Magri}
 P_2(u(x))=\frac{\ud^3}{\ud x^3}+4u\frac{\ud}{\ud x}+2u_x
\end{equation}
For finite-dimensional Poisson manifolds the celebrated Darboux theorem holds true, stating that for any Poisson bivector there locally exists a coordinate system in which it is constant and has a canonical form. The search for a canonical form of the Hamiltonian operators in the infinite dimensional manifold $M$ has been independently completed by several authors \cite{G01, dGMS05, DZ}. In \cite{DZ} and \cite{dGMS05}, this problem takes the name of \emph{triviality problem}, namely the problem of classificating all the deformations of a Poisson structures compatible with a chosed one of hydrodynamic type.

An Hamiltonian operator $P_0$ satisfies the Schouten relation $[P_0,P_0]=0$. We define a deformation $P_\epsilon$ of this operator as a formal infinite sum
\begin{equation*}
 P_\epsilon=P_0+\epsilon P_1+\epsilon^2 P_2+\cdots
\end{equation*}
such that the Schouten relation is satisfied:
\begin{equation*}
 [P_\epsilon,P_\epsilon]=0.
\end{equation*}
We say that such a deformation is trivial if there exists a general Miura type transform $\phi_\epsilon=\sum_{k=0}^\infty\epsilon^k\phi_k$ of the map space such that $\phi_{\epsilon*}P_0=P_\epsilon$ and $\phi_k$ is homogeneous of degree $k$. Getzler \cite{G01} proved that the infinite dimensional analogue of the Poisson--Lichnerowicz cohomology is trivial, hence in particular the second cohomology group (as directly proved also in \cite{dGMS05} and \cite{DZ}) vanishes. The vanishing of the second cohomology group implies that all the deformations of a given Poisson bracket of hydrodynamic type are trivial. Then, relying on Dubrovin and Novikov's result \cite{DN83}, for a certain coordinate system they must be trivial deformations of a \emph{constant} Poisson bracket. Conversely, there locally exists a system of coordinates in the space of maps for which the Poisson bracket is constant; such a system of coordinates can be regarded as the infinite dimensional analogue of the Darboux coordinates.
\paragraph*{Example}
For the case $n=1$, the constant Poisson structure of hydrodynamic type is the total derivative with respect to the spatial variable $x$. The second Hamiltonian structure of the KdV equation is easily got after a change of coordinates in the space $\M$
\begin{equation*}
 v=v(u)=u^2+iu'
\end{equation*}
where $\{u(x),u(y)\}=\delta'(x-y)$ and $u'(x)=u_x(x)$. Let us compute $\{v(x),v(y)\}$ with the help of the formula \eqref{eq:PBrackDens}
\begin{equation*}
\{f(u(x)),g(u(y))\}=\sum_{m,n\in\N}\frac{\dev f}{\dev u^{(m)}}\frac{\dev g}{\dev u^{(n)}}\dev_x^m\dev_y^n\{u(x),u(y)\} 
\end{equation*}
and recalling that $f(y)\delta^{(p)}(x-y)=\sum_{q=0}^pf^{(q)}(x)\delta^{(p-q)}(x-y)$ and $\dev_x\delta(x-y)=-\dev_y\delta(x-y)$. We get
\begin{equation*}
 \begin{split}
\{v(x),v(y)\}&=\{u^2(x)+iu'(x), u^2(y)+iu'\}\\
&=4u(x)u(y)\delta'(x-y)+2iu(y)\dev_x\delta'(x-y)+\\
&\quad+2iu(x)\dev_y\delta'(x-y)-\dev_x\dev_y\delta'(x-y)\\
&=\delta'''(x-y)+4(u^2(x)+iu'(x))\delta'(x-y)+\\
&\quad+2\dev_x(u^2(x)+iu'(x))\delta(x-y)\\
&=\left(\frac{\ud^3}{\ud x^3}+4v\frac{\ud}{\ud x}+2v'\right)\delta(x-y),
 \end{split}
\end{equation*}
namely the operator \eqref{eq:Magri} acting on the Dirac's delta.
\paragraph*{}
It must be stressed that this analogue of the Darboux theorem holds for the one-dimensional case. To our knowledge, there do not exist similar results even in dimension two. The research work by E.~Ferapontov and collaborators has produced a big outcome in the direction of classifying the integrable Hamiltonian equations of hydrodynamic type with $d$ spatial variables and their deformations (for instance, in \cite{FOS11,FNS12}); analogous results in the direction of deformations of the Poisson structure itself are not available. One of the main reason of this fact is that the required computations are very cumbersome, and their complexity increases dramatically with the order of the operators..
\subsection{Poisson Vertex Algebras}
The theory of Poisson Vertex Algebras \cite{BdSK09} provides a very effective framework to study Hamiltonian operators. The notion of a PVA, that can be seen as the semiclassical limit of Vertex Algebras \cite{K98}, has been introduced in order to deal with evolutionary Hamiltonian PDEs in which the unknown functions depend on one spatial variable. It provides a good framework for the study of integrability of such a class of equations, and also gives some insights into the study of nonlocal Poisson structures \cite{dSK13}. Let us first briefly introduce the notion of a (monodimensional) PVA.

A Poisson Vertex Algebra \cite{BdSK09} is a differential algebra $(\A,\dev)$ endowed with a bilinear operation $\A\times\A\to\R[\lambda]\otimes\A$ called a $\lambda$-bracket and satisfying the set of properties
\begin{enumerate}
\item $\{{f}_{\lambda}\dev g\}=(\lambda+\dev)\{f_{\lambda}g\}$
\item $\{\dev f_{\lambda}g\}=-\lambda\{f_{\lambda}g\}$
\item $\{f_{\lambda}gh\}=\{f_{\lambda}g\}h+\{f_{\lambda}h\}g$
\item $\{fg_{\lambda}h\}=\{f_{\lambda+\dev}h\}g+\{g_{\lambda+\dev}h\}f$
\item $\{g_{\lambda}f\}=-{}_{\to}\{f_{-\lambda-\dev}g\}$
\item $\{f_{\lambda}\{g_{\mu}h\}\}-\{g_{\mu}\{f_{\lambda}h\}\}=\{\{f_{\lambda}g\}_{\lambda+\mu}h\}$
\end{enumerate}
Let us explain the notation used in 4. and 5. Expand $\{f_{\lambda}g\}=\sum C_n\lambda^n$ with $C_n\in\A$. Then in each term of the RHS of equation 4 one has
\begin{equation*}
\{f_{\lambda+\dev}g\}h:=\sum_n C_n\left(\lambda+\dev\right)^nh 
\end{equation*}
Notice that using this convention $\{f_{\lambda+\dev}g\}=\{f_{\lambda+\dev}g\}1=\{f_{\lambda}g\}$. The RHS of the fifth equation is defined by
\begin{equation*}
{}_\to\{f_{-\lambda-\dev}g\}:=\sum(-\lambda-\dev)^nC_n
\end{equation*}

The main theorem, on which all the theory of PVA in the framework of Hamiltonian PDEs is based, is that from a $\lambda$-bracket of a PVA we can get the Poisson bracket between local functionals as
\begin{equation*}
 \left\{\int f,\int g\right\}=\int \{f_\lambda g\}\big|_{\lambda=0}.
\end{equation*}
Conversely, given a Poisson structure as a differential operator we can define a $\lambda$-bracket between the generators of a suitable differential algebra as the symbol of the differential operator; its extension to the full algebra is directly achieved by using the so called \emph{master formula}.

In the original paper and even in the more recent literature \cite{dSKV13} the theory of PVAs has been devoloped only for one dimensional Hamiltonian operators (in the original language, for a differential algebra with one derivation); since we want to deal with higher dimensional operators, we extend the definitions and the main theorems of \cite{BdSK09} introducing so-called \emph{multidimensional Poisson Vertex Algebras}, where the algebra $\A$ is endowed with $d$ commuting derivations. For a suitable $\A$, modelled on the algebra of differential polynomials of several variables, we show that the same axioms of a standard PVA, conveniently rephrased, can be used to characterize Poisson structures on this more general space of maps. In the paper, we apply the formalism of multidimensional PVAs to the $d$-dimensional Poisson brackets of hydrodynamic type. As an illustrative example, we obtain a new derivation of the set of necessary and sufficient conditions for a homogeneous differential operator of order 1 to be an Hamiltonian structure described by Mokhov in the late '80s \cite{M88}. Moreover, we apply the technique of multidimensional PVAs to the study of symmetries and deformations of such structures.

\begin{deff}[Multidimensional PVAs]
 A $d$-dimensional PVA is a differential algebra $\A$ endowed with $d$ commuting derivation $\dev_\alpha$, $\alpha=1,\ldots,d$ and with a bilinear operation $\{\cdot_{\mlambda}\cdot\}\colon\A\times\A\to\R[\lambda_1,\ldots,\lambda_d]\otimes\A$ called a $\lambda$-bracket of rank $d$. The $\lambda$-bracket of a multidimensional PVA satisfies the following set of properties
\begin{enumerate}
 \item $\{\dev_\alpha f_{\mlambda}g\}=-\lambda_\alpha\{f_{\mlambda}g\}$
 \item $\{f_{\mlambda}\dev_\alpha g\}=\left(\lambda_\alpha+\dev_\alpha\right)\{f_{\mlambda}g\}$
\item $\{f_{\mlambda}gh\}=\{f_{\mlambda}g\}h+\{f_{\mlambda}h\}g$
\item $\{fg_{\mlambda}h\}=\{f_{\mlambda+\mdev}h\}g+\{f_{\mlambda+\mdev}g\}h$
\item $\{g_{\mlambda}f\}=-{}_\to\{f_{-\mlambda-\mdev}g\}$
\item $\{f_{\mlambda}\{g_{\mmu}h\}\}-\{g_{\mmu}\{f_{\mlambda}h\}\}=\{\{f_{\mlambda}g\}_{\mlambda+\mmu}h\}$
\end{enumerate}
\end{deff}
The $\lambda$-bracket between two elements of $\A$ can be expanded as
\begin{equation*}
\{f_{\mlambda}g\}=\sum_{n_1,\ldots,n_d\in\Z_{\geq 0}} C_{n_1\ldots n_d}\lambda_1^{n_1}\cdots\lambda_d^{n_d}:=\sum_{N\in \Z^d_{\geq 0}}C_N\mlambda^N
\end{equation*}
with $C_N\in\A$. According to this decomposition, the RHS of the fourth property expands to
\begin{equation*}
\{f_{\mlambda+\mdev}g\}h=\sum_{N\in \Z^d_{\geq 0}}C_N(\mlambda+\mdev)^Nh:=\sum_{n_1,\ldots, n_d\in\Z_{\geq 0}}C_{n_1\ldots n_d}\left(\prod_{\alpha=1}^d(\lambda_\alpha+\dev_\alpha)^{n_\alpha}\right)h. 
\end{equation*}
The RHS of the fifth property is given by
\begin{equation*}
 {}_\to\{f_{-\mlambda-\mdev}g\}:=\sum_{N\in\Z^d_{\geq0}}\left(-\mlambda-\mdev\right)^NC_N=\sum_{n_1,\ldots,n_d\in\Z_{\geq 0}}\prod_{\alpha=1}^d(-\lambda_\alpha-\dev_\alpha)^{n_\alpha}C_{n_1\ldots n_d}.
\end{equation*}
For each PVA with $\A$ an algebra of differential polynomials, we can define a Lie bracket among the local functionals with density $\A$ (Theorem \ref{thm:mainPVA}). We have
\begin{equation}
 \left\{\int f,\int g\right\}:=\int\{f_{\mlambda}g\}\vert_{\mlambda=\boldsymbol{0}}.
\end{equation}
Such a Lie bracket is exactly what is called a Poisson structure. We prove that there is a one-to-one correspondence between Poisson structures on local functionals and $\lambda$-brackets on their densities.

Arnold described the relation between the Lie algebra of vector fields on a manifold and the Euler's equations of motion for incompressible fluids \cite{Ar66}, together with their Hamiltonian formulation. Novikov explicitly introduced a Poisson bracket for the system, which is the Lie--Poisson bracket of the algebra of vector fields \eqref{eq:LPhPB_intro} \cite{N82}. In Section \ref{sec:mHYPB} we consider the first order deformations of the Lie--Poisson structure \eqref{eq:LPhPB_intro}, whose equivalent $\lambda$-bracket is
\begin{equation*}
 \{{p_i}_{\mlambda}{p_j}\}=-\left(p_i\lambda_j+p_j\lambda_i+\dev_ip_j\right).
\end{equation*}
We give the set of necessary and sufficient conditions for a generic second order differential operator to be a deformation of the Lie--Poisson hydrodynamic bracket (Lemma \ref{thm:1stOrderDefoLP}).

It has been proved by Mokhov \cite{M08} that the Lie--Poisson hydrodynamic bracket is the normal form of the nondegenerate Poisson brackets of hydrodynamic type for $d=n=2$; Ferapontov and collaborators have completed the classification for the degenerate brackets in \cite{FOS11} and have recently provided the full classification for $d=2$ and $n\leq 4$ \cite{FLS13}. The main result in this paper is stated in Theorem \ref{thm:Trivia}. Any second order homogeneous differential operator compatible with an arbitrary Poisson bracket of hydrodynamic type for $d=n=2$ is a trivial deformation of the bracket itself.
\section{Multidimensional Poisson Vertex Algebras}\label{sec:PVA}
In this section we want to extend the notion of a Poisson Vertex Algebra \cite{BdSK09} in the spirit of the extension of the more general structure of a Lie conformal algebra into the category of Lie pseudoalgebras \cite{BdAK01}. Our ultimate aim is to identify Poisson brackets between local functionals on a space of maps with some $\lambda$-brackets, in the same way one can achieve this result for Poisson brackets on loop spaces.
\subsection{Formal map space}\label{ssec:mapspace}
Let $M$ be a $n$-dimensional smooth manifold. We want to descrive a class of Poisson brackets on the space \begin{equation*}
\M=\mathrm{Maps}(\Sigma\to M)
\end{equation*}
where $\Sigma$ is a compact $d$-dimensional smooth manifold. In order to avoid the problems arising from the integration, let us fix $\Sigma$ to be $(S^1)^d=T^d$. Anyway, all the definitions and the theorems of this section are expressed and work at the formal level. It means that it is not important whether we are considering functions on the $d$-torus, or rapidly decaying functions on $\R^d$; in a way, we are just working in the general space in which integration by parts is allowed producing no boundary terms.

We describe such a space of maps according to the theory of formal variational calculus \cite{GD75}. Our expositon is tightly related to the one of \cite{DZ}. Let us define the formal map space $\M$ in terms of ring of functions on it.

Let $U\subset M$ be a chart on $M$ with coordinates $(u^1,\ldots,u^n)$ and denote $\A=\A(U)$ the space of polynomials in the independent variables $u^i_I$ for $i=1,\ldots,n$ and $I\in\Z_+^d$ a multiindex (i.e., $I_\alpha=1,2,\ldots$ with $\alpha=1,\ldots,d$)
\begin{equation}\label{eq:difpoly}
 f(x,u;u_I):=\sum_{m\geq 0}f_{i_1I_1;\ldots;i_mI_m}(x,u)u^{i_1}_{I_1}\ldots u^{i_m}_{I_m}
\end{equation}
with coefficients $f_{i_1I_1;\ldots;i_mI_m}(\vec{x},u)$ smooth functions on $\Sigma\times M$. Such an expression is called a \emph{differential polynomial}. The space $\A$, endowed with a family of operators
\begin{align*}
\dev_\alpha\colon\A&\to\A\\
\;f&\mapsto\frac{\dev f}{\dev x^\alpha}+u^i_{E_\alpha}\frac{\dev f}{\dev u^i}+u^i_{I+E_\alpha}\frac{\dev f}{\dev u^i_I}
\end{align*}
($\alpha=1,\ldots,d$ and $E_\alpha=(0,0,\ldots,\underbrace{1}_\alpha,0,\ldots,0)$) satisfying the following commutation properties
\begin{subequations}\label{eq:derComm}
\begin{align}
 [\dev_\alpha,\dev_\beta]&=0&\forall\, \alpha,\beta\\\label{eq:commDeriv}
\left[\frac{\dev}{\dev u^i_I},\dev_\alpha\right]&=\frac{\dev}{\dev u^i_{I-E\alpha}}&(=0 \text{ if }I_\alpha=0)\\
\left[\frac{\dev}{\dev u^i_I},\frac{\dev}{\dev u^j_J}\right]&=0&\forall\, (i,j,I,J)
\end{align}
\end{subequations}
form what in \cite{BdSK09} is called an \emph{algebra of differential polynomials}.

Since we are interested in local (in the sense of \cite{DZ}) structures on the space of maps, we do not have to take into account the explicit dependence on the points in $\Sigma$. This justifies the following definitions, where we will restrict ourselves to consider the space $\hat{\A}\subset\A$ of differential polynomials $f$ that do not depend explicitly on $x^\alpha$. The `total derivatives' have thus the form
\begin{equation}\label{eq:totDer}
 \dev_\alpha=\sum_{\substack{i=1,\ldots,n\\I\in\Z^d_{\geq 0}}}u^i_{I+E_\alpha}\frac{\dev}{\dev u^i_I}
\end{equation}
and satisfy the same commutation relations as in \eqref{eq:derComm}.

Because of the lacking of dependence on the variables on $\Sigma$, we are allowed to identify the space of local functionals $\hat{\F}$ whose densities do not depend explicitly on $x$ with the quotient $\hat{\A}/\sum_\alpha\dev_\alpha\hat{\A}$. The quotient operation is denoted $\int$.

Let us consider the space $\X(\hat{\A})$ of vector fields on the formal space of maps. These are formal infinite sums
\begin{equation}\label{eq:Vfield}
 \xi=\sum_{I\in\Z^d_{\geq 0}}\xi^i_I(u_J)\frac{\dev}{\dev u^i_I}
\end{equation}
with $\xi^i_I\in\hat{\A}$.

The derivative of a local functional $\int f\equiv\overline{f}\in\hat{\F}$ along a vector field $\xi$ reads
\begin{equation}
 \xi\overline{f}=\int\sum_{I\in\Z^d_{\geq 0}}\xi^i_I(u_J)\frac{\dev f}{\dev u^i_I}\ud\vec{x}
\end{equation}
while the Lie bracket between two of such vector fields is obtained in a straightforward way by composition of derivations $[\xi,\eta]f=\xi(\eta f)-\eta(\xi f)$.
The total derivatives $\dev_\alpha$ can be regarded as vector fields with $\xi^i_I=u^i_{I+E_\alpha}$.

An evolutionary vector field is a derivation of $\hat{\A}$ which commutes with all the derivations $\dev_\alpha$. A simple computation shows that the condition imposes $\xi^i_I=\dev_\alpha\xi^i_{I-E_\alpha}$. Applying this relation reculsively we get that an evolutionary vector field has form
\begin{equation}\label{eq:evVF_PVA}
 \xi=\sum_{\substack{i=1,\ldots,n\\I\in\Z^d_{\geq 0}}}(\mdev^I X^i(u_J))\frac{\dev}{\dev u^i_I}.
\end{equation}
We will adopt a multi-index notation and denote
\begin{equation*}
 \prod_{\alpha=1}^d(\dev_\alpha)^{I_\alpha}=:\mdev^I.
\end{equation*}
for $I\in \Z^d_{\geq 0}$. Analogue conventions will be adopted also for more general operators or expressions, always meaning that they must be regarded as ``term by term'' powers.

$\hat{\A}^n$, as a collection of $n$ elements of $\hat{\A}$, can be regarded as a vector. We can introduce a symmetric bilinear pairing $\hat{\A}^n\times\hat{\A}^n\to\hat{\F}$ given by
\begin{equation}\label{eq:pair}
 (A,B)\mapsto\int\sum_{i=1}^n A_i\cdot B_i
\end{equation}
and use it to identify $\hat{\A}^n$ with its dual space, namely the one-forms. The \emph{variational derivative}, in this setting, is a map $\hat{\F}\to {\hat{\A}^n}{}^*$. We can write $\delta \overline{f}=\frac{\delta \overline{f}}{\delta u^i}\delta u^i$, and each component is 
\begin{equation}\label{eq:VarDer}
 \frac{\delta \overline{f}}{\delta u^i}:=\sum_I\left(-\mdev \right)^I\frac{\dev f}{\dev u^i_I}.
\end{equation}
It is worthy noticing that we are giving as definition a formula which can actually be regarded as a proposition following by the rigorous construction of a \emph{variational bicomplex}, which can be found for instance in \cite{A92}. 
\begin{lemma}
On elements of $\hat{\A}$, the variational derivative of a total derivative vanishes. Moreover, for any evolutionary vector field $\xi\in\X(\A)$  \eqref{eq:evVF_PVA} and for any $f\in\hat{\A}$, we have
\begin{equation}
 \int \xi f=\int X^i\frac{\delta f}{\delta u^i}.
\end{equation}
\end{lemma}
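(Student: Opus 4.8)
The statement contains two independent assertions. I would prove the first by a direct algebraic computation and the second by integration by parts, the latter being available because $\int \dev_\alpha g = 0$ for every $g \in \hat{\A}$: the Leibniz rule then gives $\int (\dev_\alpha g)\,h = -\int g\,(\dev_\alpha h)$, and iterating over a multi-index $I$ yields the basic identity $\int (\mdev^I g)\,h = \int g\,(-\mdev)^I h$, which I would record at the outset.

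For the first assertion I would compute $\frac{\delta \overline{\dev_\alpha h}}{\delta u^i}$ from the definition \eqref{eq:VarDer}, using the commutation relation \eqref{eq:commDeriv} in the form $\frac{\dev}{\dev u^i_I}(\dev_\alpha h) = \dev_\alpha \frac{\dev h}{\dev u^i_I} + \frac{\dev h}{\dev u^i_{I-E_\alpha}}$. Inserting this into $\sum_I (-\mdev)^I \frac{\dev}{\dev u^i_I}(\dev_\alpha h)$ splits the sum into two. In the first, the operator $\dev_\alpha$ is absorbed into the prefactor via $(-\mdev)^I \dev_\alpha = -(-\mdev)^{I+E_\alpha}$, and reindexing by $J = I + E_\alpha$ gives $-\sum_{J_\alpha \geq 1}(-\mdev)^J \frac{\dev h}{\dev u^i_{J-E_\alpha}}$. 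In the second, only terms with $I_\alpha \geq 1$ survive, since the convention annihilates the rest, producing $+\sum_{I_\alpha \geq 1}(-\mdev)^I \frac{\dev h}{\dev u^i_{I-E_\alpha}}$. These two sums are identical up to sign and cancel, so the variational derivative vanishes.

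For the second assertion I would expand the evolutionary vector field \eqref{eq:evVF_PVA} to write $\int \xi f = \int \sum_{i,I}(\mdev^I X^i)\frac{\dev f}{\dev u^i_I}$. Applying the integration-by-parts identity to each summand moves $\mdev^I$ off $X^i$ and onto $\frac{\dev f}{\dev u^i_I}$, turning the expression into $\int \sum_i X^i \sum_I (-\mdev)^I \frac{\dev f}{\dev u^i_I}$; the inner sum is precisely the component $\frac{\delta f}{\delta u^i}$ of the variational derivative \eqref{eq:VarDer}, which is the claimed identity.

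The Leibniz expansions and the recognition of the variational derivative are immediate. The only step that demands care is the cancellation in the first assertion: one must correctly track the sign arising from pulling $\dev_\alpha$ through $(-\mdev)^I$, align the two opposite index shifts $I \mapsto I \pm E_\alpha$, and check that the terms suppressed by the convention at $I_\alpha = 0$ are exactly those which would otherwise obstruct the match. This index bookkeeping is where I expect the only genuine subtlety to lie.
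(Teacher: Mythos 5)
Your proposal is correct and follows essentially the same route as the paper: the first assertion via the commutation rule \eqref{eq:commDeriv}, a split into two sums, and cancellation after the index shift $I\mapsto I\pm E_\alpha$ (the paper reindexes the second sum where you reindex the first, which is equivalent), and the second assertion by integrating the expansion \eqref{eq:evVF_PVA} by parts and recognizing \eqref{eq:VarDer}. Your version merely spells out the sign bookkeeping and the $I_\alpha=0$ convention that the paper leaves implicit.
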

\begin{proof}
Applying the commutation rule \eqref{eq:commDeriv} to the definition we get 
\begin{equation}
  \frac{\delta}{\delta u^i}\dev_\alpha f=\sum_I\left((-1)^{|I|}\mdev^{I+E_\alpha}\frac{\dev f}{\dev u^i_I}+(-1)^{|I|}\mdev^I\frac{\dev f}{\dev u^i_{I-E_\alpha}}\right).
\end{equation}
Now it is sufficient to impose $I'=I-E_\alpha$ in the second term to get the result
\begin{equation}
\sum_{I,I'}(-1)^{|I|}\mdev^{I+E_\alpha}\frac{\dev f}{\dev u^i_I}-(-1)^{|I'|}\mdev^{I'+E_\alpha}\frac{\dev f}{\dev u^i_{I'}}=0.
\end{equation}
In order to prove the second part of the lemma it is enough to integrate by parts the definition \eqref{eq:evVF_PVA}.
\end{proof}
\subsection{Poisson bivector and Poisson bracket}\label{ssec:Poisson}
A bivector, in general, is an element of $\X(\hat{\A})^{\wedge 2}$. In components, bivectors are written as infinite sums of expression of the form
\begin{equation}\label{eq:biVfield}
\alpha=\frac{1}{2}\alpha^{i_1;i_2}_{I_1;I_2}\left(u(x_1),u(x_2);\ldots\right)\frac{\dev}{\dev u^{i_1}_{I_1}(x_1)}\wedge\frac{\dev}{\dev u^{i_2}_{I_2}(x_2)}
\end{equation}
antisymmetric with respect to simultaneous exchange of $i_1,I_1,x_1\leftrightarrow i_2,I_2,x_2$.
The Lie bracket between vector fields and the wedge product allow us to define the \emph{Schouten-Nijenhuis} bracket between $k$-vectors. It is defined by extending the Lie bracket of vector fields with respect to the product, imposing the Leibniz rule $[\alpha,\beta\wedge \gamma]=[\alpha,\beta]\wedge\gamma+(-1)^{(k-1)l}\beta\wedge[\alpha,\gamma]$ if $\alpha\in\X^k$ and $\beta\in\X^l$. It turns out that the Schouten-Nijenhuis bracket is the unique extension of Lie bracket of vector fields which turns the graded algebra of $k$-vector fields into a Gerstenhaber algebra. The formulas for the Schouten-Nijenhuis bracket between generic $k$- and $l$- vectors are in general quite involved. We give the formula for the bracket of a bivector with a vector fields, which is equivalent to the Lie derivative. Given a vector field $\xi=\xi^i_I(x,u(x),\ldots)\dev/\dev u^i_I$ and $\beta$ a bivector of form \eqref{eq:biVfield} we get
\begin{equation}\label{eq:LderKV}
 \L_\xi\beta^{i_1,i_2}_{I_1,I_2}=\sum_{l,L}\xi^{l}_{L}(u_l)\frac{\dev}{\dev u^{l}_{L}(x_l)}\beta^{i_1,i_2}_{I_1I_2}-\frac{\dev\xi^{i_1}_{I_1}}{\dev u^{l}_{L}(x_l)}\beta^{l,i_2}_{L,I_2}-\frac{\dev\xi^{i_2}_{I_2}}{\dev u^{l}_{L}(x_l)}\beta^{i_1,l}_{I_1,L}.
\end{equation}
This formula is useful to define \emph{translational invariant} $k$-vectors, which satisfy
\begin{equation}\label{eq:TransInvKV}
\L_{\dev_\alpha}\beta=0
\end{equation}
for all $\alpha=1,\ldots,d$. The set of conditions \eqref{eq:TransInvKV} imposes relations among the components of the bivector analogue to the ones we have already given for evolutionary vector fields. Every translational invariant bivector $\beta$ has coefficients
\begin{equation}\label{eq:TransInvKVcompo}
 \beta^{i_1,i_2}_{I_1I_2}(u_J(x_1),u_K(x_2))=\left(\mdev_{x_1}\right)^{I_1}\left(\mdev_{x_2}\right)^{I_2}B^{i_1i_2}(u_J(x_1),u_K(x_2))
\end{equation}
where the differential polynomials $B^{i_1i_2}$ are antisymmetric in the simultaneous exchange of indices.
We call such differential polynomials the \emph{components} of the translational invariant bivector $\beta$.

A \emph{local bivector} is a translational invariant bivector such that its dependence on $x_1,x_2$ is given by a distribution with the support on the diagonal $x_1=x_2$, i.e.
\begin{equation}\label{eq:Biv}
 B^{ij}=\sum_{P\in Z^d_{\geq0}}C^{ij}_{P}(u_J(x))\mdev _{x}^{P}\delta(x-y).
\end{equation}

The delta functions and their derivatives are defined by the usual formulae
\begin{gather*}
 \int f(y)\delta(x-y)\ud\vec{y}=f(x)\\
\int f(y)\mdev _x^I\delta(x-y)\ud\vec{y}=\int f(y)(-\mdev _y)^I\delta(x-y)\ud\vec{y}=\mdev ^If(x)\\
\begin{split}\int f(x_1,\ldots,x_k)\mdev _{x_1}^{I_2}\delta(x_1-x_2)\ldots\mdev _{x_1}^{I_k}\delta(x_1-x_k)\ud \vec{x}_2\ldots\ud\vec{x}_k=\\=\mdev _{x_2}^{I_2}\ldots\mdev _{x_k}^{I_k}f(x_1,\ldots,x_k)\big\vert_{x_1=x_2=\cdots=x_k}.\end{split}
\end{gather*}
Where we do not specify the variables on which the derivatives $\mdev $ act, it is meant that they act on the first ones.
The value of a local bivector on two one-forms $\phi=\phi_{i}(x,\ldots)\delta u^i$ and $\psi=\psi_{i}(y,\ldots)\delta u^i$ is
\begin{equation}
 \int\phi_i B^{ij}(x,\ldots)\mdev \psi_j\ud\vec{x}
\end{equation}
which easily follows from simply pairing the one-forms with the bivectors and integrating in such a way that the derivatives of Dirac's delta act on the second one form. The antisymmetry of the bivector, namely $(B\phi,\psi)=-(B\psi,\phi)$ imposes on the components
\begin{equation}\label{eq:antisBiv}
 B^{ji}_S=\sum_{T\in\Z^d_{\geq 0}}(-1)^{|T|+1}\binom{T}{S}\mdev ^{T-S}B^{ij}_T
\end{equation}
where we denote $|T|=\sum_\alpha T_\alpha$ and use the binomial coefficient for multi-indices
\begin{equation}\label{eq:binomMulti}
 \binom{A}{B}=\binom{a_1}{b_1}\cdots\binom{a_d}{b_d}
\end{equation}
and
\begin{equation}\label{eq:binom}
 \binom{a}{b}=\begin{cases}
               \frac{a!}{b!(a-b)!}&0\leq b\leq a\\
0 &\text{otherwise.}
              \end{cases}
\end{equation}
We will occasionally use also the multinomial coefficients
\begin{equation*}
 \binom{A}{B_1,\ldots, B_n},\qquad B_n=A-\textstyle{\sum_{i=1}^{n-1}}B_i
\end{equation*}
which definition is analogue to the one for binomial coefficients with multi-indices, given the usual multinomial coefficient
\begin{equation}\label{eq:multin}
 \binom{a}{b_1\ldots b_n}=\frac{a!}{b_1!b_2!\ldots b_n!}.
\end{equation}
From the componentwise expression \eqref{eq:Biv} we see that each component can be interpreted as a differential operator acting on the Dirac's delta
\begin{equation}
 B^{ij}(x,u(x),u(x)_I;\frac{\ud}{\ud x})\delta(x-y)
\end{equation}
with
\begin{equation*}
 B^{ij}(x,u(x),u(x)_I;\frac{\ud}{\ud x})=\sum B^{ij}_S\mdev ^S.
\end{equation*}

A \emph{local Poisson structure} is a local bivector $P\in\X^{\wedge 2}(\M)$ satisfying the Schouten relation $[P,P]=0$.

Given a Poisson bivector it is possible to define a bilinear operation (that we will call a bracket) on the space of local densities $f\in\A$. It can be used to define a bracket on the space $\hat{\F}$ of local functionals, which is usually called the \emph{Poisson bracket} of functionals. This name is somehow confusing since the Poisson bracket of functionals is not the bracket on a Poisson algebra; indeed, it fails to be a derivation, because of the lack of a product in the space of functionals.

Given a Poisson structure $P$, we first define the bracket on $\hat{\A}$ on the basis elements $u^i$; we will often refer to them as the \emph{generators} of $\hat{\A}$. We have
\begin{equation}\label{eq:PBrackGen}
 \{u^i(x),u^j(y)\}=\sum_S P^{ij}_S\left(u(x),u_I(x)\right)\mdev ^S\delta(x-y).
\end{equation}
 This definition extends to two generic densities $f,g\in\hat{\A}$ according to the formula
\begin{equation}\label{eq:PBrackDens}
 \{f(x),g(y)\}=\sum_{L,M}\frac{\dev f}{\dev u^i_L(x)}\frac{\dev g}{\dev u^j_M(y)}\mdev _x^L\mdev _y^M\{u^i(x),u^j(y)\}.
\end{equation}
Such a bracket satisfies by definition the Leibniz rule, i.e. $\{f,gh\}=\{f,g\}h+g\{f,h\}$ and it is obviously bilinear. An important remark is that such a bracket does not satisfy neither the usual skewsymmetry property nor the Jacobi identity, thus it is not a Lie bracket and, \emph{a fortiori}, not even a Poisson bracket. The reason why the two important properties do not hold is quite natural: we defined a Poisson bivector to be skewsymmetric in the sense \eqref{eq:antisBiv}, which means that the skewsymmetry makes sense only after the integration, i.e. on $\hat{\F}$. On $\hat{\F}$ Leibniz property does not hold, but we can give a genuine Lie bracket.
\begin{deff}[Poisson bracket]
 A Poisson bracket $\{,\}$ in $\hat{\F}=\hat{\A}/\ud\hat{\A}$ is a bilinear operation
\begin{align*}
 \{\cdot,\cdot\}\colon\hat{\F}\times\hat{\F}&\to\hat{\F}\\
\qquad\left(\int f,\int g\right)&\mapsto\left\{\int f,\int g\right\}
\end{align*}
satisfying the following two fundamental properties:
\begin{enumerate}
 \item Skewsymmetry: $\{\int f,\int g\}=-\{\int g,\int f\}$
\item Jacobi identity: $\{\int f,\{\int g,\int h\}\}-\{\int g,\{\int f,\int h\}\}=\{\{\int f,\int g\},\int h\}$
\end{enumerate}
Applying the skewsymmetry property, Jacobi identity can also be written as the vanishing of the expression $\{\{\int f,\int g\},\int h\}+\mathrm{cycl.}(f,g,h)=0$ which is the usual way to write it.

Given a Poisson bivector $P$ of form \eqref{eq:Biv} satisfying $[P,P]=0$, the Poisson bracket of two local functionals $\int f(u(x),u_I(x))\ud \vec{x}$ and $\int g(u(y),u_I(y))\ud \vec{y}$ is given by
\begin{equation}\label{eq:PBrackFunct}
\begin{split}
 \left\{\int f,\int g\right\}&=\int\int \left\{f(x),g(y)\right\}\ud \vec{x}\ud\vec{y}\\
&=\int\int\sum_{L,M}\frac{\dev f}{\dev u^i_L(x)}\frac{\dev g}{\dev u^j_M(y)}\mdev _x^L\mdev _y^M\{u^i(x),u^j(y)\}\ud\vec{x}\ud\vec{y}\\
&=\int\int\frac{\delta f}{\delta u^i(x)}\frac{\delta g}{\delta u^j(y)}\{u^i(x),u^j(y)\}\ud\vec{x}\ud\vec{y}\\
&=\sum_{S\in\Z^d_{\geq 0}}\int\frac{\delta f}{\delta u^i(x)} P^{ij}_S(u(x),u_I(x))\mdev ^S\frac{\delta g}{\delta u^j(x)}\ud\vec{x}
\end{split}
\end{equation}
where the second equality is given by \eqref{eq:PBrackDens}, the third one is obtained by integrating by parts and transferring the total derivatives $\mdev $ on the partial derivatives of $f$ and $g$ respectively and the fourth one by performing the integration with respect to $\vec{y}$ for the Dirac's delta.
\end{deff}
We do not prove here that the Schouten condition for $P$ is the crucial requirement for the bracket to be Lie, namely to satisfy Jacobi identity. Although it is possible to get this result with the Dirac's delta formalism -- or even by regarding the Poisson bracket of densities as a distribution and evaluating it on test functions -- we are going to shift our point of view and consider Barakat, De Sole and Kac's approach to Hamiltonian operators on a space of maps in terms of Poisson Vertex Algebras \cite{BdSK09}.

\subsection{Poisson Vertex Algebras}\label{ssec:mPVA}
Let $\hat{\A}$ be a differential algebra with $d$ commuting derivations. Usually, we consider the algebra of differential polynomials or an extension thereof.

\begin{deff}[$\lambda$-bracket]
 A $\lambda$-bracket (of rank $d$) on $\hat{\A}$ is a $\R$-linear map
\begin{align*}
 \{\cdot_{\mlambda}\cdot\}\colon\hat{\A}\times\hat{\A}&\to\R[\lambda_1,\ldots,\lambda_d]\otimes\hat{\A}\\
(f,g)&\mapsto\{f_{\mlambda} g\}
\end{align*}
which is \emph{sesquilinear}, namely
\begin{subequations}\label{eq:sesquilDef}
\begin{align}\label{eq:lsesqui}
 \{\dev_\alpha f_{\mlambda} g\}&=-\lambda_\alpha\{f_{\mlambda} g\}\\ \label{eq:rsesqui}
\{f_{\mlambda}\dev_\alpha g\}&=\left(\dev_\alpha+\lambda_\alpha\right)\{f_{\mlambda} g\}
\end{align}
\end{subequations}
and obeys, respectively, the \emph{right} and \emph{left Leibniz rule}
\begin{subequations}\label{eq:rlLeibDef}
 \begin{align}\label{eq:rLeib}
\{f_{\mlambda} gh\}&=\{f_{\mlambda} g\}h+\{f_{\mlambda} h\}g\\ \label{eq:lLeib}
\{fg_{\mlambda} h\}&=\{f_{\mlambda+\mdev}h\}g+\{g_{\mlambda+\mdev}h\}f
 \end{align}
\end{subequations}
\end{deff}
By definition, the $\lambda$-bracket of two elements in $\hat{\A}$ is a polynomial in $\lambda_1,\ldots,\lambda_d$ (we will often refer to the collection of $\lambda_\alpha$ as $\mlambda$) with coefficients in $\hat{\A}$. In general, we can write $\{f_{\mlambda} g\}=A(f,g)_{i_1,\ldots,i_d}\lambda_1^{i_1}\ldots\lambda_d^{i_d}$ which, using the usual multiindex notation, is equivalent to writing $A(f,g)_I\mlambda^I$. When, as in \eqref{eq:lLeib}, we write $\{f_{\mlambda+\mdev}g\}$ it means that the $\lambda$ product is $A(f,g)_I\left(\mlambda+\mdev\right)^I$, with the derivation acting on the right (if nothing is written on the right, it is equivalent to the derivatives acting on 1 and thus the only term not vanishing is $\mlambda^I$).
\begin{deff}[Multidimensional Poisson Vertex Algebra]
 A ($d$-dimensional) \emph{Poisson Vertex Algebra} is a differential algebra $\hat{\A}$ endowed with a $\lambda$-bracket of rank $d$ which is \emph{skewsymmetric}
\begin{equation}\label{eq:LambdaSkew}
 \{g_{\mlambda}f\}=-{}_{\to}\{f_{-\mlambda-\mdev}g\}
\end{equation}
and satisfy the \emph{PVA-Jacobi identity}
\begin{equation}\label{eq:LambdaJac}
 \{f_{\mlambda}\{g_{\mmu}h\}\}-\{g_{\mmu}\{f_{\mlambda}h\}\}=\{\{f_{\mlambda}g\}_{\mlambda+\mmu}h\}.
\end{equation}
\end{deff}
The notation used in \eqref{eq:LambdaSkew} means that the differential operators $(-\mlambda-\mdev)$ must be regarded as acting on the coefficient of the bracket, too; namely ${}_{\to}\{f_{-\mlambda-\mdev}g\}=(-\mlambda-\mdev)^IA(f,g)_I$.
\begin{thm}[Master formula]\label{thm:master} Let $\hat{\A}$ be the algebra of differential polynomials (or an extension thereof) as defined in Section \ref{ssec:mapspace}. Given two elements $(f,g)\in\hat{\A}$, their $\lambda$-bracket can be expressed in terms of the $\lambda$-bracket between the so-called \emph{generators} of $\hat{\A}$, $\{u^i\}_{i=1,\ldots,n}$. We have
 \begin{equation}\label{eq:MasterFormula}
  \{f_{\mlambda}g\}=\sum_{\substack{i,j=1\ldots,n\\M,N\in\Z^d_{\geq 0}}}\frac{\dev g}{\dev u^j_N}(\mlambda+\mdev)^N\{u^i_{\mlambda+\mdev}u^j\}(-\mlambda-\mdev)^M\frac{\dev f}{\dev u^i_M}.
 \end{equation}
In particular, the skewsymmetry and the PVA-Jacobi property hold if and only if the same properties for the generators hold.
\end{thm}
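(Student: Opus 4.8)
The plan is to prove \eqref{eq:MasterFormula} by reducing each of the two entries of the $\lambda$-bracket to the generators $u^i$, using that every element of $\hat{\A}$ is a differential polynomial in the variables $u^i_I=\mdev^I u^i$. First I would treat the right entry. The right Leibniz rule \eqref{eq:rLeib} states precisely that $\{f_{\mlambda}\cdot\}$ is a derivation of the commutative algebra $\hat{\A}$ valued in $\hat{\A}[\mlambda]$; since $\hat{\A}$ is generated by the $u^j_N$ (with smooth functions of the $u^j$ as coefficients), the chain rule for derivations gives $\{f_{\mlambda}g\}=\sum_{j,N}\{f_{\mlambda}u^j_N\}\frac{\dev g}{\dev u^j_N}$, the sum running over all $N\in\Z^d_{\geq0}$. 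Applying right sesquilinearity \eqref{eq:rsesqui} repeatedly to $\{f_{\mlambda}u^j_N\}=\{f_{\mlambda}\mdev^N u^j\}=(\mlambda+\mdev)^N\{f_{\mlambda}u^j\}$ then yields the right reduction
\begin{equation*}
\{f_{\mlambda}g\}=\sum_{j,N}\frac{\dev g}{\dev u^j_N}(\mlambda+\mdev)^N\{f_{\mlambda}u^j\},
\end{equation*}
in which $(\mlambda+\mdev)^N$ acts only on the polynomial $\{f_{\mlambda}u^j\}$.

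A symmetric argument handles the left entry. The left Leibniz rule \eqref{eq:lLeib} makes $f\mapsto\{f_{\mlambda}g\}$ a derivation as well, now in the twisted sense that the shift $\mlambda\to\mlambda+\mdev$ acts on the complementary factor; together with left sesquilinearity \eqref{eq:lsesqui}, which gives $\{\mdev^M u^i{}_{\mlambda}g\}=(-\mlambda)^M\{u^i_{\mlambda}g\}$, this produces the left reduction
\begin{equation*}
\{f_{\mlambda}g\}=\sum_{i,M}\{u^i_{\mlambda+\mdev}g\}(-\mlambda-\mdev)^M\frac{\dev f}{\dev u^i_M}.
\end{equation*}
Composing the two, i.e. substituting the left reduction (with $g=u^j$) into the factor $\{f_{\mlambda}u^j\}$ of the right reduction, gives exactly \eqref{eq:MasterFormula}. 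The whole computation runs parallel to the one-dimensional case of \cite{BdSK09}, with scalar indices replaced by multi-indices and the multi-index binomials \eqref{eq:binomMulti} governing the expansions of the shifted operators. The main obstacle is purely bookkeeping: one must track which factor each shifted operator $(\mlambda+\mdev)^N$ or $(-\mlambda-\mdev)^M$ acts on --- in particular that in \eqref{eq:MasterFormula} the derivatives inside $\{u^i_{\mlambda+\mdev}u^j\}$ are understood to act through onto $(-\mlambda-\mdev)^M\frac{\dev f}{\dev u^i_M}$ --- and check that the nested substitutions $\mlambda\to\mlambda+\mdev$ compose correctly in the combination step.

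For the final clause, the ``only if'' direction is immediate, the generators being elements of $\hat{\A}$. For ``if'', note that \eqref{eq:MasterFormula} is built solely from sesquilinearity and the two Leibniz rules, so these axioms are automatic and only skewsymmetry \eqref{eq:LambdaSkew} and the PVA-Jacobi identity \eqref{eq:LambdaJac} remain to be verified. I would show that the skewsymmetrizer $\{g_{\mlambda}f\}+{}_{\to}\{f_{-\mlambda-\mdev}g\}$ and the Jacobiator $\{f_{\mlambda}\{g_{\mmu}h\}\}-\{g_{\mmu}\{f_{\mlambda}h\}\}-\{\{f_{\mlambda}g\}_{\mlambda+\mmu}h\}$ each obey Leibniz- and sesquilinearity-type rules in every argument --- a consequence of the axioms already in force --- so that each is a derivation in each slot and is therefore determined by its values on the generators. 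Vanishing on the $u^i$ then forces vanishing on all of $\hat{\A}$. Verifying that the Jacobiator genuinely respects this multiderivation structure is the most delicate point here, though it is again a routine, if lengthy, check mirroring \cite{BdSK09}.
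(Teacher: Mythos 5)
Your proposal is correct and takes essentially the same approach as the paper's own (sketched) proof: the right reduction from the right Leibniz rule plus right sesquilinearity, the left reduction from the left Leibniz rule plus left sesquilinearity, their composition into \eqref{eq:MasterFormula}, and the reduction of skewsymmetry and PVA-Jacobi to the generators via the fact that the relevant expressions are determined by their values on the $u^i$, all mirroring Theorem 1.15 of \cite{BdSK09}. If anything, your write-up is more explicit than the paper's sketch, which leaves the composition step and the final clause to the reader.
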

We give here only a sketch of the proof of the theorem. The complete -- rather cumbersome -- proof extends to the $d$-dimensional case the Theorem 1.15 of \cite{BdSK09} and follows the same ideas, without major technical issues. Our aim is to prove that the master formula provides the unique bilinear opeation satisfying the properties of a PVA for any two elements of $\hat{\A}$. From sesquilinearity of the bracket between two generators we have that $\{u^i_M {}_{\mlambda}u^j_N\}=(\mlambda+\mdev)^N(-\mlambda)^M\{u^i_{\mlambda}u^j\}$. Moreover, from the right Leibniz property \eqref{eq:rLeib} follows that $\{f_{\mlambda}\cdot\}$ is a derivation of $\hat{\A}$, thus it acts on $g$ only by its derivatives $\frac{\dev g}{\dev u^j_N}$. We get
\begin{equation}
 \{f_{\mlambda}g\}=\sum\{f_{\mlambda}u^j_N\}\frac{\dev g}{\dev u^j_N}.
\end{equation}
Applying the sesquilinearity \eqref{eq:rsesqui} we thus obtain
\begin{equation}\label{eq:MasterRight}
 \{f_{\mlambda}g\}=\sum\frac{\dev g}{\dev u^j_N}(\mlambda+\mdev)^N\{f_{\mlambda}u^j\}
\end{equation}
where the partial derivatives pf $g$ have been put on the left to denote that the total derivatives in the parenthesis act only on the $\lambda$-bracket itself, according to the right Leibniz rule \eqref{eq:rLeib}.

The way in which the derivatives of the first function $f$ enter into the master formula, conversely, is dictated by the left Leibniz rule and the sesquilinearity for the first entry of the bracket. We have
\begin{equation}\label{eq:MasterLeft}
 \{f_{\mlambda}g\}=\sum\{u^i_{\mlambda+\mdev}g\}(-\mlambda-\mdev)^M\frac{\dev f}{\dev u^i_M}.
\end{equation}
Note that in \eqref{eq:MasterLeft} the total derivatives act also on the partial derivatives of $f$, as imposed by the left Leibniz rule \eqref{eq:lLeib}. One can then prove that the skewsymmetry and the PVA-Jacobi identity for the brackets between the generators are the only conditions needed for the corresponding properties between generic elements of $\hat{\A}$. 

In Section \ref{ssec:Poisson} we have noticed that there is a remarkable difference between the bracket defined by the same Poisson bivector in the space of local densities and the one among local functionals. In short, while the former is not a Lie bracket but it is a derivation, the latter -- despite being an actual Lie bracket -- fails at being the bracket of a Poisson algebra. The main discovery which establishes a relation between the theory of Hamiltonian PDEs and Poisson Vertex Algebras has originally been proved in \cite{BdSK09} for a PVA of rank 1, namely that the Poisson bracket (strictly speaking, the bracket defined by a Poisson bivector) among local densities is related to a $\lambda$-bracket by the relation
\begin{equation}\label{eq:PoissonLambda}
 \{f,g\}=\{f_{\mlambda}g\}\big\vert_{\mlambda=0}\qquad f,g\in\hat{\A}.
\end{equation}
Its extension to the more general case we are dealing with is straightforward. This fact is summarized by the following
\begin{thm}\label{thm:mainPVA}
 Let $\hat{\A}$ be an algebra of differential polynomials with a $\lambda$-bracket and consider the bracket on $\hat{\A}$ defined in \eqref{eq:PoissonLambda}. Then
\begin{enumerate}[(a)]
\item The bracket \eqref{eq:PoissonLambda} induces a well-defined bracket on the quotient space $\hat{\F}$;
\item If the $\lambda$-bracket satisfies the axioms of a PVA, then the induced bracket on $\hat{\F}$ is a Lie bracket.
\end{enumerate}
\end{thm}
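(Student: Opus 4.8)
The plan is to verify the two claims separately, deriving everything from the structural properties \eqref{eq:sesquilDef}, \eqref{eq:LambdaSkew} and \eqref{eq:LambdaJac} of the $\lambda$-bracket together with the single elementary fact that a total derivative integrates to zero, i.e. $\int\dev_\alpha a=0$ for every $a\in\hat{\A}$, which is precisely the statement that $\hat{\F}=\hat{\A}/\sum_\alpha\dev_\alpha\hat{\A}$.

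For part (a) I would show that $\int\{f_{\mlambda}g\}\vert_{\mlambda=0}$ is unchanged if either argument is modified by a total derivative. Replacing $g$ by $g+\dev_\alpha b$ and using the right sesquilinearity \eqref{eq:rsesqui}, the extra contribution is $\int(\dev_\alpha+\lambda_\alpha)\{f_{\mlambda}b\}\vert_{\mlambda=0}=\int\dev_\alpha\bigl(\{f_{\mlambda}b\}\vert_{\mlambda=0}\bigr)$, a total derivative, hence zero. Replacing $f$ by $f+\dev_\alpha a$ and using the left sesquilinearity \eqref{eq:lsesqui}, the extra contribution is $\int(-\lambda_\alpha\{a_{\mlambda}g\})\vert_{\mlambda=0}$, which vanishes at $\mlambda=0$. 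Thus the bracket descends to $\hat{\F}$.

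For the skewsymmetry half of part (b) I would set $\mlambda=0$ in the PVA-skewsymmetry \eqref{eq:LambdaSkew}. Writing $\{f_{\mlambda}g\}=\sum_N C_N\mlambda^N$, its right-hand side evaluates to $-\sum_N(-\mdev)^N C_N$; under $\int$ every term with $|N|\geq 1$ is a total derivative and drops out, leaving only $-\int C_0=-\int\{f_{\mlambda}g\}\vert_{\mlambda=0}$. This is exactly $\{\int g,\int f\}=-\{\int f,\int g\}$.

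The Jacobi identity is the main point, and the key observation is that the functional bracket detects only the $\mlambda=0$ component of a $\lambda$-bracket. Since $\mmu$ enters $\{g_{\mmu}h\}$ only through scalar coefficients, $\R$-bilinearity gives $\{f_{\mlambda}\{g_{\mmu}h\}\}\vert_{\mmu=0}=\{f_{\mlambda}(\{g_{\mmu}h\}\vert_{\mmu=0})\}$, and similarly for the other term, so that (using part (a) to pick the density $\{g_{\mmu}h\}\vert_{\mmu=0}$ as a representative of the inner functional) integrating the left-hand side of \eqref{eq:LambdaJac} and setting $\mlambda=\mmu=0$ reproduces exactly $\{\int f,\{\int g,\int h\}\}-\{\int g,\{\int f,\int h\}\}$. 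On the right-hand side, setting $\mlambda=\mmu=0$ in $\{\{f_{\mlambda}g\}_{\mlambda+\mmu}h\}$ annihilates every term carrying a positive power of the spectral parameters, leaving only the contribution of the $\mlambda$-independent part $\{f_{\mlambda}g\}\vert_{\mlambda=0}$, which reproduces $\{\{\int f,\int g\},\int h\}$. Integrating \eqref{eq:LambdaJac} and evaluating at the origin therefore yields precisely the functional Jacobi identity. The obstacle I expect here is bookkeeping rather than conceptual: one must handle the convention for the iterated bracket $\{\{f_{\mlambda}g\}_{\mlambda+\mmu}h\}$ carefully enough to confirm that after the double evaluation at zero only the constant coefficient $\{f_{\mlambda}g\}\vert_{\mlambda=0}$ survives, and to check that reducing the inner bracket to a density before bracketing again agrees with the simultaneous evaluation of \eqref{eq:LambdaJac} at $\mlambda=\mmu=0$.
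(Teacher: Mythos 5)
Your proposal is correct and follows essentially the same route as the paper: part (a) via the two sesquilinearity axioms, skewsymmetry by evaluating \eqref{eq:LambdaSkew} at $\mlambda=0$ and discarding total derivatives under $\int$ (the paper phrases this with the exponential notation $e^{\mdev\frac{\ud}{\ud\mlambda}}$, you with the explicit coefficient expansion $\sum_N C_N\mlambda^N$ --- the same computation), and Jacobi by setting $\mlambda=\mmu=0$ in \eqref{eq:LambdaJac}. Your treatment of the Jacobi step is in fact more detailed than the paper's one-line remark, correctly spelling out why only the constant coefficient of $\{f_{\mlambda}g\}$ survives in $\{\{f_{\mlambda}g\}_{\mlambda+\mmu}h\}$ and why part (a) licenses using $\{g_{\mmu}h\}\vert_{\mmu=0}$ as the representative density of the inner functional.
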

\begin{proof}
Part (a). From the property of sesquilinearity we have that, for any $\alpha=1,\ldots,d$,
\begin{align}
 \{f+\mdev^{E_\alpha} h,g\}&=\left(\{f_{\mlambda}g\}-\mlambda^{E_\alpha}\{h_{\mlambda}g\}\right)\big|_{\mlambda=0}=\{f,g\}\\\notag
\{f,g+\mdev^{E_\alpha}h\}&=\left(\{f_{\mlambda}g\}+(\mlambda+\mdev)^{E_\alpha}\{f_{\mlambda}h\}\right)\big|_{\mlambda=0}\\
&=\{f,g\}+\mdev^{E_\alpha}\{f,h\}\sim\{f,g\}.
\end{align}
Part (b). The Jacobi property for the bracket follows immediately by setting $\mlambda=\mmu=0$ in PVA-Jacobi, while the skewsymmetry is a consequence of the skewsymmetry for the $\lambda$-bracket. First, we introduce a notation widely used in \cite{BdSK09}, namely
\begin{equation}\label{eq:expoform}
 \left(e^{\mdev\frac{\ud}{\ud\mlambda}}u\right)f(\mlambda)=f(\mlambda+\mdev)u.
\end{equation}
In words, we use the convention that the $\mdev$ in the exponent acts only on what is inside the parentheses. This notation is justified by the Taylor expansion of the exponential, which turns out to be equivalent to the RHS; the most important part is to always keep track of the terms on which the derivations are acting on.

We have
\begin{equation}
\begin{split}
\{g,f\}&=\{g_{\mlambda}f\}\big|_{\mlambda=0}\\
&=-_\to\{f_{-\mlambda-\mdev}g\}\big|_{\mlambda=0}\qquad\text{(skewsymmetry)}\\
&=-\left(e^{\mdev\frac{\ud}{\ud\mlambda}}\{f_{-\mlambda}g\}\right)\big|_{\mlambda=0}\qquad\text{using \eqref{eq:expoform}}\\
&=-\left(1+\mdev\frac{\ud}{\ud\mlambda}+\cdots\right)\{f_{-\mlambda}g\}\big|_{\mlambda=0}\\
&\sim-\{f,g\}.
\end{split}
\end{equation}
\end{proof}
Conversely, given a Poisson bracket among local densities, the corresponding $\lambda$-bracket is its formal Fourier transform. The aim of this paragraph is to show that the Fourier transform of the bracket of local densities is indeed a $\lambda$-bracket, which satisfies the PVA axioms if and only if the bracket is defined by a local Poisson bivector. This result is very important because working with the $\lambda$-brackets we do not deal with differential operators on a quotient space, but with simple differential polynomials.
\begin{deff}[Formal Fourier transform]
Given a $\hat{\A}$ valued formal distribution $D(\vec{x},\vec{y})$ (with $\vec{x},\vec{y}\in M$, $\dim M=d$), its \emph{formal Fourier transform} is the linear map
\begin{equation*}
 D(\vec{x},\vec{y})\mapsto\int\ud \vec{x} e^{\mlambda\cdot(\vec{x}-\vec{y})}D(\vec{x},\vec{y})=:\mathrm{F}D(\vec{y},\mlambda)
\end{equation*}
with values in $\hat{\A}[\lambda_1,\ldots,\lambda_d]$. It is equivalent to the one introduced, in a different context, by Kac and De Sole in \cite{dSK06}. The symbol of the integral $\int\ud\vec{x}$ must be regarded as the quotient operator with respect to $\sum_\alpha\dev_{x^\alpha}$.
\end{deff}
\begin{lemma}\label{thm:Symbol}
Let us consider a differential operator acting on a Dirac's delta
\begin{equation*}
 P(u(\vec{x}),\mdev_x)\delta(\vec{x}-\vec{y})=\sum_S P(u(\vec{x}))_S\mdev_x^S\delta(\vec{x}-\vec{y}).
\end{equation*}
Its formal Fourier transform is the symbol of the operator itself, namely
\begin{equation}\label{eq:Symbol}
\sum_S P(u(\vec{x}))_S\mlambda^S.
\end{equation}
\end{lemma}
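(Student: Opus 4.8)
The plan is to reduce the statement to a single-monomial computation by linearity and then evaluate the formal Fourier transform of $P_S(u(\vec{x}))\mdev_x^S\delta(\vec{x}-\vec{y})$ directly from the definition. Since $\mathrm{F}$ is $\R$-linear and, for a fixed differential operator, the sum over $S$ is finite, it suffices to establish for every multi-index $S\in\Z^d_{\geq0}$ the identity
\[
\int\ud\vec{x}\, e^{\mlambda\cdot(\vec{x}-\vec{y})}\,P_S(u(\vec{x}))\,\mdev_x^S\delta(\vec{x}-\vec{y})=P_S(u(\vec{y}))\,\mlambda^S ,
\]
where $P_S$ denotes the coefficient of $\mdev_x^S$ in the operator $P(u(\vec{x}),\mdev_x)$; summing over $S$ then reproduces \eqref{eq:Symbol}.

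For the single monomial I would move the $x$-derivatives off the Dirac delta. Recall that $\int\ud\vec{x}$ is the quotient with respect to $\sum_\alpha\dev_{x^\alpha}$, so integrating a total derivative gives zero and integration by parts produces no boundary contributions. The cleanest route is the recursion obtained by integrating $\dev_{x^\alpha}\bigl(e^{\mlambda\cdot(\vec{x}-\vec{y})}\mdev_x^{S-E_\alpha}\delta(\vec{x}-\vec{y})\bigr)$, which vanishes under $\int\ud\vec{x}$; each step trades one derivative on the delta for a factor $\lambda_\alpha$ arising from $\dev_{x^\alpha}e^{\mlambda\cdot(\vec{x}-\vec{y})}$ (up to the sign fixed by the orientation of the exponent in the definition of $\mathrm{F}$). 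Iterating $|S|$ times and then using the defining property $\int\ud\vec{x}\,g(\vec{x})\delta(\vec{x}-\vec{y})=g(\vec{y})$ to localize at $\vec{x}=\vec{y}$, where the exponential equals $1$, produces the factor $\mlambda^S$ together with the coefficient $P_S$ evaluated at $\vec{y}$. This is exactly the $d$-dimensional analogue of the rank-one identity $\mathrm{F}(\dev_x^n\delta)=\lambda^n$ underlying the correspondence of \cite{BdSK09}.

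The step I expect to be the real obstacle is the bookkeeping of derivatives when $P_S$ is a genuine differential polynomial rather than a constant. Acting with $\dev_{x^\alpha}$ on the product $e^{\mlambda\cdot(\vec{x}-\vec{y})}P_S(u(\vec{x}))$ splits, by the Leibniz rule, into a term hitting the exponential (the desired $\lambda_\alpha$) and a term hitting the coefficient through the chain rule, i.e.\ the total derivative $\dev_\alpha P_S$. One must keep the two kinds of derivative separate — the genuine $x$-derivatives acting on the delta and the exponential, versus the total derivatives acting on the coefficients — and verify, using the multi-index binomial identities \eqref{eq:binomMulti}--\eqref{eq:binom}, that after localization at $\vec{x}=\vec{y}$ and re-summation over $S$ the coefficient-derivative contributions reorganize into the stated symbol; equivalently, one arranges the computation so that $\mdev_x$ acts solely on the delta throughout and the coefficients are simply read off. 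Apart from this combinatorial and sign accounting, governed by $\mdev_x\delta(\vec{x}-\vec{y})=-\mdev_y\delta(\vec{x}-\vec{y})$ and the orientation of the exponent, the argument introduces no conceptual difficulty beyond the one-dimensional case.
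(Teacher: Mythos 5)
Your reduction to a single monomial and the use of linearity are fine, but the monomial identity you then assert is false, and the step you yourself flag as ``the real obstacle'' is precisely where it breaks --- it is not a bookkeeping nuisance that can be absorbed, it changes the answer. With the coefficients placed at $\vec{x}$, each of your integration-by-parts steps produces, besides the factor $\lambda_\alpha$ from the exponential, the Leibniz term $\dev_\alpha P_S$ acting on the coefficient, and iterating the recursion gives
\begin{equation*}
\int\ud\vec{x}\; e^{\mlambda\cdot(\vec{x}-\vec{y})}\,P_S(u(\vec{x}))\,\mdev_x^S\delta(\vec{x}-\vec{y})
=\left(-\mlambda-\mdev\right)^S P_S\Big|_{u(\vec{y})},
\end{equation*}
not $P_S(u(\vec{y}))\,\mlambda^S$. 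These corrections do not ``reorganize into the stated symbol'': already in $d=1$ with the operator $u(x)\dev_x\delta(x-y)$ one finds $\int e^{\lambda(x-y)}u(x)\dev_x\delta(x-y)\,\ud x=-\lambda u(y)-u'(y)$, which differs from the symbol $\lambda u$ even in its leading sign (note that even for constant coefficients your orientation gives $\mathrm{F}(\dev_x^n\delta)=(-\lambda)^n$, not $\lambda^n$). What your route actually computes is an expression of the type ${}_\to\{\cdot_{-\mlambda-\mdev}\cdot\}$ appearing in the skewsymmetry axiom \eqref{eq:LambdaSkew}, and the gap between it and the symbol is exactly the content of the antisymmetry resummation \eqref{eq:antisBiv}; no multi-index binomial identity will make it vanish.

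The lemma is true --- and the paper proves it --- for the operator written with coefficients at the \emph{second} point and with $\dev_y$-derivatives on the delta, $P_S(u(\vec{y}))\,\mdev_y^S\delta(\vec{x}-\vec{y})$, which is also the form in which it is applied immediately afterwards in \eqref{eq:LambdaGen}, $\{u^i(\vec{x}),u^j(\vec{y})\}=P^{ji}(u(\vec{y}))_S\mdev_y^S\delta(\vec{x}-\vec{y})$; the $u(\vec{x})$ appearing in the printed statement is loose typesetting, as the paper's own proof makes clear. In that form the computation is genuinely trivial and your ``obstacle'' never arises: one trades $\mdev_y^S\delta(\vec{x}-\vec{y})$ for $(-\mdev_x)^S\delta(\vec{x}-\vec{y})$, integrates by parts in $\vec{x}$ --- the coefficients depend only on $\vec{y}$, so they are inert and the derivatives hit nothing but the exponential, producing exactly $\mlambda^S$ --- and then localizes with the delta, which sets $\vec{x}=\vec{y}$ and kills the exponential. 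The two normal forms $P_S(u(\vec{x}))\mdev_x^S\delta$ and $\tilde{P}_S(u(\vec{y}))\mdev_y^S\delta$ of one and the same distribution have \emph{different} coefficient families, related by binomial resummations with signs; your proof commits to the first family while claiming the answer attached to the second. To repair it, either start from the $\vec{y}$-form of the operator, or keep the $\vec{x}$-form and prove the corrected identity with right-hand side $(-\mlambda-\mdev)^S P_S$.
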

\begin{proof}
Expanding the multiindex notation and keeping the sum implicit we have
\begin{equation*}
\begin{split}
\mathrm{F}P(u(\vec{y}),\mlambda)&=\int e^{\mlambda\cdot(\vec{x}-\vec{y})}P_{s_1\ldots s_d}(u(\vec{y}))\dev^{s_1}_{y^1}\ldots\dev^{s_d}_{y^d}\delta(\vec{x}-\vec{y})\ud\vec{x}\\
&=\int e^{\mlambda\cdot(\vec{x}-\vec{y})}P_{s_1\ldots s_d}(u(\vec{y}))(-\dev^{s_1}_{x^1})\ldots(-\dev^{s_d}_{x^d})\delta(\vec{x}-\vec{y})\ud\vec{x}\\
\intertext{integrating by parts}
&=\int\dev^{s_1}_{x^1}\ldots\dev^{s_d}_{x^d}e^{\mlambda\cdot(\vec{x}-\vec{y})}P_{s_1\ldots s_d}(u(\vec{y}))\delta(\vec{x}-\vec{y})\ud\vec{x}\\
&=\lambda_1^{s_1}\cdots\lambda_d^{s_d}P_{s_1\ldots s_d}(u(\vec{y}))\\
\intertext{which, using the usual multiindex notation, is}
&=P_S(u(\vec{y}))\mlambda^S.
\end{split}
\end{equation*}
\end{proof}
In order to prove our claim that the Fourier transform of a Poisson bracket of densities is a $\lambda$-bracket, we proceed as follows: first, we will prove that the skewsymmetry and the Jacobi property of the bracket among the generators, i.e. the coordinate functions, imply the skewsymmetyry \eqref{eq:LambdaSkew} and the PVA-Jacobi identity \eqref{eq:LambdaJac} for $\lambda$-bracket. Then we will compute the Fourier transform of the Poisson bracket between two generic densities and we will prove that it is expressed in terms of the Fourier transform of the bracket of generators by the master formula. Hence, the Fourier transform of the Poisson bracket is a $\lambda$-bracket.

The Poisson bracket of two coordinate functions $u^i(\vec{x})$ and $u^j(\vec{y})$ is given by
\begin{equation*}
 \{u^i(\vec{x}),u^j(\vec{y})\}=P^{ji}(u(\vec{y}))_S\mdev_y^S\delta(\vec{x}-\vec{y})
\end{equation*}
where $P^{ji}_S\mdev^S$ are the components of the Poisson bivector defining the bracket. From the lemma \ref{thm:Symbol}, its Fourier transform is
\begin{equation}\label{eq:LambdaGen}
 \{u^i_{\mlambda}u^j\}(\vec{y})=P^{ji}(u(\vec{y}))_S\mlambda^S.
\end{equation}
\begin{lemma}
The Lie bracket \eqref{eq:LambdaGen} is skewsymmetric in the sense of \eqref{eq:LambdaSkew}.
\end{lemma}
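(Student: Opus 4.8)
The plan is to check the skewsymmetry relation \eqref{eq:LambdaSkew} directly on the generators and to reduce it to the antisymmetry condition \eqref{eq:antisBiv} that the bivector components already satisfy by construction. Setting $g=u^j$ and $f=u^i$ in \eqref{eq:LambdaSkew}, the left-hand side is $\{u^j_{\mlambda}u^i\}=P^{ij}_S\mlambda^S$ by \eqref{eq:LambdaGen}, while the right-hand side is read off from the convention spelled out just after \eqref{eq:LambdaSkew}, giving ${}_{\to}\{u^i_{-\mlambda-\mdev}u^j\}=\sum_S(-\mlambda-\mdev)^SP^{ji}_S$. Thus the lemma amounts to the single polynomial identity
\begin{equation*}
 P^{ij}_S\mlambda^S=-\sum_{S}(-\mlambda-\mdev)^SP^{ji}_S,
\end{equation*}
and the task is purely to manipulate the right-hand side into the left one.

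First I would expand the operator $(-\mlambda-\mdev)^S$ by the multi-index binomial theorem. Since $\mlambda$ and $\mdev$ commute and $\mdev$ acts only on the coefficient $P^{ji}_S$, one obtains
\begin{equation*}
 (-\mlambda-\mdev)^SP^{ji}_S=\sum_{R\in\Z^d_{\geq0}}\binom{S}{R}(-1)^{|S|}\mlambda^{S-R}\mdev^RP^{ji}_S,
\end{equation*}
after combining the signs $(-1)^{|S-R|}(-1)^{|R|}=(-1)^{|S|}$, the binomial coefficient enforcing $R\leq S$ automatically via \eqref{eq:binom}. Multiplying by $-1$ and extracting the coefficient of a fixed power $\mlambda^Q$, i.e. imposing $S-R=Q$, the identity to be proved becomes
\begin{equation*}
 P^{ij}_Q=\sum_{R\in\Z^d_{\geq0}}\binom{Q+R}{R}(-1)^{|Q+R|+1}\mdev^RP^{ji}_{Q+R}.
\end{equation*}

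The last step is the change of summation variable $T=Q+R$. Using $\binom{Q+R}{R}=\binom{T}{T-Q}=\binom{T}{Q}$ and $|Q+R|=|T|$, the right-hand side turns into $\sum_{T}(-1)^{|T|+1}\binom{T}{Q}\mdev^{T-Q}P^{ji}_T$, which is precisely the antisymmetry condition \eqref{eq:antisBiv} with the roles of $i$ and $j$ interchanged (and $S$ renamed $Q$). Since the components $P^{ij}_S$ of a skewsymmetric bivector satisfy \eqref{eq:antisBiv} by definition, the identity holds and the lemma follows.

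I expect the only delicate point to be the sign and binomial bookkeeping in the multi-index setting together with the reindexing $T=Q+R$; no analytic input beyond \eqref{eq:antisBiv} is required, and once the two expressions are written with a common summation variable they coincide term by term.
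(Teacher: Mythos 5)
Your proof is correct. After the binomial expansion and the reindexing $T=Q+R$, the identity you arrive at,
\begin{equation*}
P^{ij}_Q=\sum_{T\in\Z^d_{\geq0}}(-1)^{|T|+1}\binom{T}{Q}\mdev^{T-Q}P^{ji}_T,
\end{equation*}
is indeed \eqref{eq:antisBiv} with the roles of $i$ and $j$ interchanged, and since \eqref{eq:antisBiv} holds for every pair of indices this closes the argument; your sign bookkeeping $(-1)^{|S-R|}(-1)^{|R|}=(-1)^{|S|}$ and the identity $\binom{Q+R}{R}=\binom{T}{T-Q}=\binom{T}{Q}$ are valid componentwise for multi-indices, with the convention \eqref{eq:binom} automatically enforcing $Q\leq T$. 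Your route differs from the paper's in formalism rather than in the key input: the paper proves the lemma by writing $\{u^i_{\mlambda}u^j\}$ as the formal Fourier transform $\int e^{\mlambda\cdot(\vec{x}-\vec{y})}\{u^i(\vec{x}),u^j(\vec{y})\}\,\ud\vec{x}$, substituting \eqref{eq:antisBiv} for $P^{ji}_S(u(\vec{y}))$ inside the integral, integrating by parts and finally integrating out the Dirac delta, whereas you work directly in $\R[\lambda_1,\ldots,\lambda_d]\otimes\hat{\A}$ with the symbol \eqref{eq:LambdaGen} and match coefficients of $\mlambda^Q$. The combinatorial core (binomial expansion of $(-\mlambda-\mdev)^S$, sign tracking, reindexing) is identical in both; what your version buys is self-containedness, since no delta calculus or integration by parts is needed once \eqref{eq:LambdaGen} is taken as the formula for the bracket, while the paper's version keeps the computation inside the Fourier-transform framework it is building — the lemma there is a step toward Theorem \ref{thm:FourPVA}, so manipulating the distributional form of the bracket is the natural choice in that context.
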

\begin{proof}
From the form of the Poisson brackets of generators we have that
\begin{align*}
\{u^i(\vec{x}),u^j(\vec{y})\}&=P^{ji}_S(u(\vec{y}))\mdev^S_y\delta(\vec{x}-\vec{y})\\
\{u^j(\vec{y}),u^i(\vec{x})\}&=P^{ij}_S(u(\vec{x}))\mdev^S_x\delta(\vec{y}-\vec{x}).
\end{align*}
We recall the skewsymmetry relation of the Poisson bivector \eqref{eq:antisBiv}, which gives
\begin{equation}\label{eq:antisBiv2}
 P^{ji}_S(\vec{y})=-\sum_T(-1)^{|T|}\binom{T}{S}\mdev^{T-S}P^{ij}_S(\vec{x})
\end{equation}
and apply it within the Fourier transform. We get
\begin{equation*}
\begin{split}
\{u^i_{\mlambda}u^j\}&=\int e^{\mlambda\cdot(\vec{x}-\vec{y})}P^{ji}_S(u(\vec{y}))\mdev^S_y\delta(\vec{x}-\vec{y})\ud \vec{x}\\
&=-\int e^{\mlambda\cdot(\vec{x}-\vec{y})}(-1)^{|T|}\binom{T}{S}\mdev^{T-S}(P^{ij}_S(u(\vec{x})))(-\mdev^S_x)\delta(\vec{x}-\vec{y})\ud \vec{x}\\
&=-\int(-1)^{|T|}\binom{T}{S}\mdev_x^S\left[e^{\mlambda\cdot(\vec{x}-\vec{y})}\mdev^{T-S}_xP^{ij}_T(u(\vec{x}))\right]\delta(\vec{x}-\vec{y})\ud\vec{x}\\
&=-\int(-1)^{|T|}\binom{T}{S}\binom{S}{L}\mlambda^L\mdev^{S-L+T-S}P^{ij}_T(u(\vec{x}))\delta(\vec{x}-\vec{y})\ud\vec{x}\\
&=-(-\mlambda-\mdev)^T P^{ij}_T(u(\vec{y}))\\
&=-_{\to}\{u^j_{-\mlambda-\mdev}u^i\}. 
\end{split}
\end{equation*}
\end{proof}
\begin{lemma}\label{lemma:PVAJacobi}
The Lie bracket \eqref{eq:LambdaGen} satisfies the PVA-Jacobi identity, namely
\begin{equation*}
\{u^i_{\mlambda}\{u^j_{\mmu}u^k\}\}-\{u^j_{\mmu}\{u^i_{\mlambda}u^k\}\}=\{\{u^i_{\mlambda}u^j\}{}_{\mmu+\mlambda}u^k\}.\end{equation*}
\end{lemma}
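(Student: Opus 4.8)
The plan is to deduce the PVA-Jacobi identity for the generators directly from the Schouten condition $[P,P]=0$ satisfied by the defining bivector, in the same spirit in which the previous lemma obtained $\lambda$-skewsymmetry from the antisymmetry relation \eqref{eq:antisBiv2}. The crucial observation is that in each of the three terms of the identity one takes the $\lambda$-bracket of a \emph{generator} against a \emph{differential polynomial}: either $\{u^j_{\mmu}u^k\}=P^{kj}_S\mmu^S$ on the left-hand side, or $\{u^i_{\mlambda}u^j\}=P^{ji}_S\mlambda^S$ on the right-hand side. Hence the master formula (Theorem \ref{thm:master}) applies and expresses everything through the generators' brackets \eqref{eq:LambdaGen} and the derivatives $\dev P^{ij}_S/\dev u^l_N$.

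First I would expand the left-hand side. Since $\{u^j_{\mmu}u^k\}=P^{kj}_S\mmu^S$ has scalar coefficients $\mmu^S$, the right Leibniz rule \eqref{eq:rLeib} together with the right master formula \eqref{eq:MasterRight} gives
\[
 \{u^i_{\mlambda}\{u^j_{\mmu}u^k\}\}=\sum_{l,S,N,T}\mmu^S\,\frac{\dev P^{kj}_S}{\dev u^l_N}\,(\mlambda+\mdev)^N\bigl(P^{li}_T\mlambda^T\bigr),
\]
and the second term is obtained by the exchange $(\mlambda,i)\leftrightarrow(\mmu,j)$. For the right-hand side the first argument $\{u^i_{\mlambda}u^j\}=P^{ji}_S\mlambda^S$ is itself a differential polynomial, so the left Leibniz rule \eqref{eq:lLeib} and the left master formula \eqref{eq:MasterLeft}, with the outer spectral parameter set to $\mlambda+\mmu$, produce a sum of the schematic form
\[
 \{\{u^i_{\mlambda}u^j\}_{\mlambda+\mmu}u^k\}=\sum_{l,S,M,T}P^{kl}_T\,(\mlambda+\mmu+\mdev)^T(-\mlambda-\mmu-\mdev)^M\Bigl(\mlambda^S\frac{\dev P^{ji}_S}{\dev u^l_M}\Bigr).
\]
Each of the three expressions is thus a bilinear combination of the components $P$ with a single derivative $\dev P/\dev u^l_N$, dressed with shift operators in $\mlambda$, $\mmu$ and $\mlambda+\mmu$.

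The heart of the proof is the last step: collecting the three contributions and showing that, monomial by monomial in $\mlambda$ and $\mmu$, their difference is exactly the symbol---in the sense of the formal Fourier transform of Lemma \ref{thm:Symbol}---of the corresponding component of the trivector $[P,P]$. Once this identification is made, the hypothesis $[P,P]=0$ forces the combination to vanish and establishes PVA-Jacobi. I expect the main obstacle to be precisely this matching: it requires expanding the shift operators $(\mlambda+\mdev)^N$ and $(-\mlambda-\mmu-\mdev)^M$, redistributing the total derivatives onto the $P$'s, and repeatedly applying the multi-index binomial identities \eqref{eq:binomMulti} so that the assembled coefficients line up with the explicit form of the Schouten bracket of two local bivectors. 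This bookkeeping is cumbersome but routine; the entire conceptual content of the lemma lies in the reduction to $[P,P]=0$.
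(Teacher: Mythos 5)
Your master-formula expansions of the three terms are correct --- they coincide with the intermediate expressions the paper itself arrives at (the right master formula \eqref{eq:MasterRight} for the two terms on the left-hand side, the left master formula \eqref{eq:MasterLeft} with spectral parameter $\mlambda+\mmu$ for the term on the right-hand side). The gap is exactly at what you call the heart of the proof. You propose to show that the assembled combination is, monomial by monomial, ``the symbol of the corresponding component of the trivector $[P,P]$'', and you dismiss this as cumbersome but routine bookkeeping. Within this paper's framework that step cannot even be started: the Schouten--Nijenhuis bracket is introduced only through its Gerstenhaber-algebra characterization and the Lie-derivative formula \eqref{eq:LderKV}; no component formula for $[P,P]$ of two local multidimensional bivectors is ever given (the paper explicitly remarks that such formulas ``are in general quite involved''); Lemma \ref{thm:Symbol} defines the symbol only for bivectors, not for trivectors, whose components carry two delta functions and whose symbol would live in two sets of indeterminates; and the paper expressly declines to prove that the Schouten condition $[P,P]=0$ is equivalent to the Jacobi property of the induced bracket. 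So the object you want to match against is not available, and producing it --- together with the proof that the PVA-Jacobi combination equals it --- is not bookkeeping: it is the entire content of the lemma, deferred rather than proved. Depending on how one chooses to define $[P,P]$ for local bivectors, your identity is either a substantial theorem (if $[P,P]$ is defined via the Gerstenhaber structure, as here) or a tautology that disconnects you from the paper's definition of a Poisson structure.

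The paper's route avoids this entirely. It takes as input the Jacobi property of the \emph{distributional} bracket among generators,
\begin{equation*}
\{u^i(x),\{u^j(y),u^k(z)\}\}-\{u^j(y),\{u^i(x),u^k(z)\}\}=\{\{u^i(x),u^j(y)\},u^k(z)\},
\end{equation*}
where the dependence of the three arguments on the distinct variables $x,y,z$ is crucial, and applies to each of the three terms the double formal Fourier transform with kernel $e^{\mlambda\cdot(x-z)}e^{\mmu\cdot(y-z)}$. The computation in Appendix \ref{app:proofPVAJacobi} --- delta-function calculus, integration by parts, and the multi-index binomial identities \eqref{eq:binomMulti} --- shows that these three transforms are exactly $\{u^i_{\mlambda}\{u^j_{\mmu}u^k\}\}$, $\{u^j_{\mmu}\{u^i_{\mlambda}u^k\}\}$ and $\{\{u^i_{\mlambda}u^j\}_{\mlambda+\mmu}u^k\}$, i.e.\ precisely your three expansions; the identity then follows by linearity of the transform. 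If you want to salvage your plan, replace ``match with the symbol of $[P,P]$'' by ``match with the double Fourier transform of the distributional Jacobi identity'' and then actually carry out that delta-function computation: that is the step that bears all the weight, and it is the one your proposal leaves blank.
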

The proof of the lemma is a lenghty computation of the double formal Fourier transform with respect to $e^{\mlambda\cdot(\vec{x}-\vec{y})}e^{\mmu\cdot(\vec{y}-\vec{z})}$ for the three terms of the Jacobi identity, where the dependency of the coordinate functions $u^i(\vec{x})$, $u^j(\vec{y})$ and $u^k(\vec{z})$ on different independent variables plays a crucial role. The detailed account of the computations is left to Appendix \ref{app:proofPVAJacobi}.

To conclude this discussion, we want to show that taking the Fourier transform of the Poisson bracket between two densities gives a formula which coincides with the master formula for a $\lambda$-bracket. The computation is rather lengthy but in a sense straightforward. We want to compute
\begin{equation*}
 \int e^{\mlambda\cdot(\vec{x}-\vec{y})}\{f(\vec{x}),g(\vec{y})\}\ud \vec{x}.
\end{equation*}
For convenience, we drop the boldface typesetting to denote that the variables $x,y,z$ are coordinates in $\R$. We expand the Poisson bracket and get
\begin{equation*}
 \int e^{\mlambda\cdot(x-y)}\frac{\dev f(x)}{\dev u^i_M}\frac{\dev g(y)}{\dev u^j_N}\mdev_x^M\mdev_y^N\left(P^{ji}_S(y)\mdev_y^S\delta(x-y)\right)\ud x.
\end{equation*}
The derivatives respect to $x$ do not act on the coefficients $P^{ji}_S$ because they depend on functions of $y$ by definition. Inside the bracket, moreover, we can trade the derivatives respect to $y$ with the ones respect to $x$ exploiting the properties of Dirac's delta, thus obtaining
\begin{equation*}
 (-1)^{|M|}\int e^{\mlambda\cdot(x-y)}\frac{\dev f(x)}{\dev u^i_M}\frac{\dev g(y)}{\dev u^j_N}\mdev_y^N\left(P^{ji}_S(y)(-\mdev_x)^{M+S}\delta(x-y)\right)\ud x.
\end{equation*}
Then we perform the derivatives $\mdev_y^N$ and use the same trick
\begin{equation*}
 (-1)^{|M|}\binom{N}{T}\int e^{\mlambda\cdot(x-y)}\frac{\dev f(x)}{\dev u^i_M}\frac{\dev g(y)}{\dev u^j_N}\mdev_y^T(P^{ji}_S(y)(-\mdev_x)^{N-T+M+S}\delta(x-y)\ud x. 
\end{equation*}
We integrate by parts and let the $\mdev_x$ act properly. Then, we can finally integrate the Dirac's delta and get
\begin{equation*}
 (-1)^{|M|}\frac{\dev g}{\dev u^j_N}\binom{N}{T}\binom{N-T+M+S}{R}\mlambda^{N-T+M+S-R}\mdev^T(P^{ji}_S)\mdev^R\frac{\dev f}{\dev u^i_M}.
\end{equation*}
By applying the Newton's binomial,
\begin{multline*}
(-1)^{|M|}\frac{\dev g}{\dev u^j_N}\binom{N}{T}\mlambda^{N-T}\mdev^T(P^{ji}_S)(\mlambda+\mdev)^{M+S}\frac{\dev f}{\dev u^i_M} \\
=(-1)^{|M|}\frac{\dev g}{\dev u^j_N}(\mlambda+\mdev)^N(P^{ji}_S(\mlambda+\mdev)^{M+S}\frac{\dev f}{\dev u^i_M})\\
=\frac{\dev g}{\dev u^j_N}(\mlambda+\mdev)^N(P^{ji}_S(\mlambda+\mdev)^S(-\mlambda-\mdev)^M\frac{\dev f}{\dev u^i_M}).
\end{multline*}
Recalling the form of the $\lambda$-bracket between the generators, the last expression is
\begin{equation*}
 \frac{\dev g}{\dev u^j_N}(\mlambda+\mdev)^N\{u^i_{\mlambda+\mdev}u^j\}(-\mlambda-\mdev)^M\frac{\dev f}{\dev u^i_M}),
\end{equation*}
namely the master formula.

We have thus proved the following theorem
\begin{thm}\label{thm:FourPVA}
 Given a local Poisson bivector $P$ on the space of maps $\mathrm{Map}(\Sigma,M)\cong\hat{\A}$, the Fourier transform of the bracket induced by the bivector is the $\lambda$-bracket of a Poisson Vertex Algebra on $\hat{\A}$.
\begin{equation}\label{eq:lambdaTransf}
 \{f_{\mlambda}g\}(\vec{y}):=\int_\Sigma e^{\mlambda\cdot(\vec{x}-\vec{y})}\{f(\vec{x}),g(\vec{y})\}\ud\vec{x}.
\end{equation}
\end{thm}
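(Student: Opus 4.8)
The plan is to assemble the theorem from the structural results already in place rather than to verify the PVA axioms directly for arbitrary densities. The key observation is that the bracket \eqref{eq:lambdaTransf} is completely pinned down by its values on the generators $u^i$ together with the master formula, so it suffices to check the axioms on generators and then invoke Theorem \ref{thm:master}.

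First I would verify that \eqref{eq:lambdaTransf} defines an honest $\lambda$-bracket, i.e.\ an $\R$-bilinear map into $\hat{\A}[\lambda_1,\ldots,\lambda_d]$ obeying sesquilinearity \eqref{eq:sesquilDef} and the two Leibniz rules \eqref{eq:rlLeibDef}. Bilinearity and the polynomial dependence on $\mlambda$ are immediate from the construction, while the Leibniz rules are inherited from the corresponding property of the bracket of densities recorded after \eqref{eq:PBrackDens}; the sesquilinearity relations follow from commuting the Fourier kernel $e^{\mlambda\cdot(\vec{x}-\vec{y})}$ past the derivations $\dev_\alpha$ under the integral sign and integrating by parts, which produces exactly the factors $-\lambda_\alpha$ and $(\dev_\alpha+\lambda_\alpha)$. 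These are routine once the behaviour of the kernel under $\dev_\alpha$ is recorded.

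Next I would reduce the two genuine PVA axioms to statements about generators. The bracket of the coordinate functions is computed by Lemma \ref{thm:Symbol}: the Fourier transform of the operator $P^{ji}_S\mdev^S$ acting on $\delta(\vec{x}-\vec{y})$ is its symbol, giving \eqref{eq:LambdaGen}. Skewsymmetry \eqref{eq:LambdaSkew} of this bracket is exactly the content of the lemma proved from the bivector antisymmetry \eqref{eq:antisBiv}, and the PVA-Jacobi identity \eqref{eq:LambdaJac} on generators is Lemma \ref{lemma:PVAJacobi}, which encodes the Schouten relation $[P,P]=0$. It then remains to confirm that the Fourier transform of the Poisson bracket \eqref{eq:PBrackDens} between two generic densities $f,g$ reproduces the master formula \eqref{eq:MasterFormula} with these generator brackets inserted; this is the explicit computation sketched at the end of the section, in which the derivatives of $f$ and $g$ are pulled outside, the delta derivatives are redistributed by integration by parts, and the powers of $\mlambda$ and $\mdev$ are reassembled by Newton's binomial. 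Once this identification holds, Theorem \ref{thm:master} asserts that skewsymmetry and PVA-Jacobi for all of $\hat{\A}$ follow from their validity on generators, which completes the proof.

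The main obstacle is the bookkeeping in the two computations on generators, above all in Lemma \ref{lemma:PVAJacobi}. There the double Fourier transform with kernel $e^{\mlambda\cdot(\vec{x}-\vec{y})}e^{\mmu\cdot(\vec{y}-\vec{z})}$ must be evaluated on each of the three terms of the Jacobi identity, and the crux is that the three coordinate functions depend on three distinct variables $\vec{x},\vec{y},\vec{z}$; keeping precise track of which derivation acts on which argument, and how each factor of $\mdev$ converts into a factor of $\mlambda$ or $\mmu$ upon integration by parts, is the delicate point. By contrast, the matching with the master formula, though lengthy, introduces no new difficulty beyond the same careful accounting of the Dirac delta identities. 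I would relegate this verification to the appendix, as the paper does, and present the theorem itself as the packaging of these lemmas through the master formula.
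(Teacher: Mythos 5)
Your proposal is correct and follows essentially the same route as the paper: compute the generator brackets as symbols via Lemma \ref{thm:Symbol}, establish skewsymmetry and the PVA-Jacobi identity on generators (the latter being Lemma \ref{lemma:PVAJacobi}, proved in the appendix), show the Fourier transform of the density bracket \eqref{eq:PBrackDens} reproduces the master formula, and conclude via Theorem \ref{thm:master}. Your preliminary step of directly checking sesquilinearity and the Leibniz rules for \eqref{eq:lambdaTransf} is harmless but redundant, since these properties come for free once the identification with the master formula is established.
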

\subsection{Cohomology of Poisson Vertex Algebras}\label{ssec:PVACohom}
In Section \ref{ssec:mPVA} we have proved the correspondence between local Poisson bivectors and PVAs. It is well known \cite{L77} that from the Schouten relation $[P,P]=0$ it follows that one can define a linear differential $\ud_P=[P,\cdot]$ which is a coboundary operator, $\ud_P^2=0$.

From the properties of the Schouten bracket it follows that $\ud_P\colon\Lambda^k(M)\to\Lambda^{k+1}(M)$, both for the finite and the infinite dimensional setting (for the latter, see for instance \cite{DZ}). One can define the cochain complex
\begin{equation*}
 0\to \Lambda^0(M)\xrightarrow{\ud_P}\Lambda^1(M)\xrightarrow{\ud_P}\Lambda^2(M)\xrightarrow{\ud_P}\cdots
\end{equation*}
and its cohomology, which is called the Poisson--Lichnerowicz cohomology of $M$.

It is quite natural to repeat the construction in the context of Poisson Vertex Algebras. All the details are exposed by De Sole and Kac for one-dimensional PVAs \cite{dSK13-2}, but the definitions we are interested in are basically the same.
\begin{deff}[Variational complex of a PVA]
Given an algebra of differential polynomials $\hat{\A}$, let us consider the free commutative superalgebra $\widetilde{\Omega}^\bullet(\hat{\A})$ over $\hat{\A}$ with odd generators $\delta u_M^i$, $i=1,\ldots,n$, $M\in\Z^d_{\geq0}$. We define a grading on $\widetilde{\Omega}^\bullet(\hat{\A})$, imposing $\deg f=0$ for $f\in\hat{\A}$ and $\deg\delta u^i_M=1$, so that we can decompose $\widetilde\Omega^\bullet=\bigoplus_{k\in Z_{\geq 0}}\widetilde\Omega^k$. An odd derivation $\delta\colon\widetilde{\Omega}^k\to\widetilde{\Omega}^{k+1}$ is defined by
\begin{align*}
 \delta f&:=\sum_{i,M}\frac{\dev f}{\dev u^i_M}\delta u^i_{M}&f&\in\hat{\A}\\
\delta(\delta u^i_M)&=0
\end{align*}
Since $\delta^2=0$, we get a complex $(\widetilde{\Omega},\delta)$ which is called the \emph{basic variational complex}. The total derivatives $\dev_\alpha$ defined in \eqref{eq:totDer} can be extended to $\widetilde{\Omega}$ by $\dev_\alpha\delta u^i_M=\delta u^i_{M+E_\alpha}$. One can easily prove that $[\dev_\alpha, \delta]=0$ for any $\alpha=1,\ldots,d$. In such a way, we can define the reduced complex $(\Omega,\delta)$, that is called the \emph{variational complex}.
\begin{equation}\label{eq:varComplex}
 \Omega=\oplus_{k\geq 0}\Omega^k,\qquad\Omega^k=\widetilde{\Omega}^k/\sum_\alpha\dev_\alpha\widetilde{\Omega}^k
\end{equation}
and $\delta$ is the induced differential between the quotient spaces.
\end{deff}
The space $\Omega^0(\hat{\A})$ is immediately identified with $\hat{\F}$ the space of local functionals. The derivative $\delta$ is the operator of the variational derivative, since we can use $\delta u^i_M=\mdev^M\delta u^i$ and the quotient map to get exactly \eqref{eq:VarDer}.
$\Omega^1(\hat{\A})$ is the space of evolutionary vector fields and the space $\Omega^2(\hat{\A})$ is isomorphic to the space of local bivectors (see the proof in \cite{BdSK09}, which holds for all $\Omega^k$ and local $k$-vectors).

Let us consider an element $X\in\Omega^{k-1}(\hat{\A})$. According to the aforementioned result, we regard $\Omega^k$ as the space of local $k$-vector fields. Given a local bivector whose symbol is $\sum P_S\mlambda^S$, we extend to the multidimensional PVA the definition of \cite{dSK13-2} for the PVA differential and we get
\begin{multline*}
(\ud_P X)\big(F^0,\dots,F^k\big)=\sum_{i=0}^k (-1)^{k+i}
\int F^i\cdot P_S\mdev^S\frac{\delta}{\delta u} X(F^0,\stackrel{i}{\check{\dots}},F^k)+\\
+ \sum_{0\leq i<j\leq k} (-1)^{k+i+j} X\Big(
\frac\delta{\delta u} \int \big(F^i\cdot P_S\mdev^S F^j\big),
F^0,\stackrel{i}{\check{\dots}}\,\stackrel{j}{\check{\dots}},F^k
\Big)
\end{multline*}
The differential $\ud_P\colon\Omega^{k-1}\to\Omega^k$ is in one-to-one correspondence with $[P,\cdot]$ the ordinary coboundary operator of the Poisson--Lichnerowicz cohomology, and it squares to 0 thanks to the PVA-Jacobi identity.

Exactly as in the classical case, one can then define the cohomology of the complex: the first cohomology group $H^1(\hat{\A},\ud_P)$ is identified with the symmetries of the Poisson bivector and the second cohomology group $H^2(\hat{\A},\ud_P)$ is identified with the Poisson structures compatible with the structure $P$. In the language of the $\lambda$-brackets, we say that $H^2(\hat{\A},\ud_{\{\cdot_{\mlambda}\cdot\}_0})$ is the space of the $\lambda$-brackets which are compatibles with $\{\cdot_{\mlambda}\cdot\}_0$.

We introduce a grading on $\hat{\A}$. We simply define
\begin{align*}
 \deg u^i_I&=|I|:=\sum_\alpha I_\alpha&\deg f(u_i)&=0.
\end{align*}
This means that $\deg\dev_\alpha f(u^i;u^i_I)=\deg f(u^i;u^i_I)+1$. Moreover, we assign to $\mlambda^I$ the degree $|I|$. Any $\lambda$-bracket can be decomposed, according to this grading, into homogenous parts. A rigorous way to obtain such a decomposition is to introduce a formal indeterminate $\epsilon$ of degree $-1$ and rescale the bracket in such a way that $\sum_{k=-1}^\infty\epsilon^{k+1}\{u^i_{\mlambda}u^j\}^{[k]}$ is of degree 0; the reason why $\{u^i_{\mlambda}u^j\}^{[k]}$ is in fact of degree $k+1$ is that we want to keep consistency with the notation used in \cite{DZ}.

We can use this grading to decompose each cohomology group
\begin{equation*}
 H^k(\hat{\A},\ud_{\{\cdot_{\mlambda}\cdot\}_0})=\bigoplus_n H^k_{[n]}(\hat{\A},\ud_{\{\cdot_{\mlambda}\cdot\}_0}).
\end{equation*}
We will call each $H^k_{[n]}$ the $k$-th cohomology group of $n$-th order.

\section{Multidimensional Poisson brackets of hydrodynamic type}\label{sec:mHYPB}
In this section we apply the formalism we have discussed in the previous one to the so-called multidimensional Poisson brackets of hydrodynamic type introduced by Dubrovin and Novikov in \cite{DN84}. They are brackets on a space of maps $\Sigma\to M$ defined by a bivector whose components are differential operators of the first order linear with respect to the first derivatives of the maps; in terms of $\lambda$-bracket among the generators of $\hat{\A}$, they have the form
\begin{equation}\label{eq:DNbiv}
 \{u^i_{\mlambda}u^j\}=\sum_{\alpha=1}^d g^{ij\alpha}(u)\lambda_\alpha+\sum_{\stackrel{\alpha=1\ldots d}{k=1\ldots n}}b^{ij\alpha}_k(u)\dev_\alpha u^k.
\end{equation}
Since the order of derivatives we will deal with is not very high, it is easier to switch back to a single-index notation, namely $\mlambda^{E_\alpha}=\lambda_\alpha$ and (for instance) $u_{E_\alpha+E_\beta}=\dev_{\alpha\beta}u$. Dubrovin and Novikov in \cite{DN84} have found, generalizing the result for the one-dimensional case they discovered in \cite{DN83}, a set of necessary conditions for a differential operator of type \eqref{eq:DNbiv} to define a Poisson bracket in the space of local functionals, provided $g^\alpha$ were nondegenerate. Some years later Mokhov \cite{M88} proved the complete set of axioms the collection of functions $(g^{ij\alpha},b^{ij\alpha}_k)$ must fulfil. They are summarized in the following theorem
\begin{thm}[\cite{M88}]\label{thm:Mokhov}
 Let $P$ be a differential operator whose symbol is \eqref{eq:DNbiv}. The bracket among local functionals of density $f,g$ defined by
 \begin{equation*}
  \left\{\int f,\int g\right\}:=\int\frac{\delta f}{\delta u^i}P^{ij}\frac{\delta g}{\delta u^j}
 \end{equation*}
is a Poisson bracket -- equivalently, \eqref{eq:DNbiv} is the $\lambda$-bracket of a Poisson Vertex Algebra -- if and only if
\begin{subequations}\label{eq:Mokhov}
\begin{gather}\label{eq:M1}
g^{ij\alpha}=g^{ji\alpha}\\\label{eq:M2}
\frac{\dev g^{ij\alpha}}{\dev u^k}=b^{ij\alpha}_k+b^{ji\alpha}_k\\\label{eq:M3}
\sum_{(\alpha,\beta)}\left(g^{ai\alpha}b^{jk\beta}_a-g^{aj\beta}b^{ik\alpha}_a\right)=0\\ \label{eq:M4}
\sum_{(i,j,k)}\left(g^{ai\alpha}b^{jk\beta}_a-g^{aj\beta}b^{ik\alpha}_a\right)=0\\ \label{eq:M5}
\sum_{(\alpha,\beta)}\left[g^{ai\alpha}\left(\frac{\dev b^{jk\beta}_a}{\dev u^r}-\frac{\dev b^{jk\beta}_r}{\dev u^a}\right)+b^{ij\alpha}_ab^{ak\beta}_r-b^{ik\alpha}_ab^{aj\beta}_r\right]=0\\ \label{eq:M6}
g^{ai\beta}\frac{\dev b^{jk\alpha}_r}{\dev u^a}-b^{ij\beta}_ab^{ak\alpha}_r-b^{ik\beta}_ab^{ja\alpha}_r=g^{aj\alpha}\frac{\dev b^{ik\beta}_r}{\dev u^a}-b^{ja\alpha}_ab^{ak\beta}_r-b^{jk\alpha}_ab^{ia\beta}_r\\ \notag
\frac{\dev}{\dev u^s}\left[g^{ai\alpha}\left(\frac{\dev b^{jk\beta}_a}{\dev u^r}-\frac{\dev b^{jk\beta}_r}{\dev u^a}\right)+b^{ij\alpha}_ab^{ak\beta}_r-b^{ik\alpha}_ab^{aj\beta}_r\right]\\ \label{eq:M7}
+\frac{\dev}{\dev u^r}\left[g^{ai\beta}\left(\frac{\dev b^{jk\alpha}_a}{\dev u^s}-\frac{\dev b^{jk\alpha}_s}{\dev u^a}\right)+b^{ij\beta}_ab^{ak\alpha}_s-b^{ik\beta}_ab^{aj\alpha}_s\right]\\\notag
+\sum_{(i,j,k)}\left[b^{ai\beta}_r\left(\frac{\dev b^{jk\alpha}_s}{\dev u^a}-\frac{\dev b^{jk\alpha}_a}{\dev u^s}\right)\right]+\sum_{(i,j,k)}\left[b^{ai\alpha}_s\left(\frac{\dev b^{jk\beta}_r}{\dev u^a}-\frac{\dev b^{jk\beta}_a}{\dev u^r}\right)\right]=0
\end{gather}\end{subequations}
The notation $\sum_{(a_1,a_2,\ldots)}$ used for instance in \eqref{eq:M3} means the cyclic summation over the indices. Conditions \eqref{eq:M1} -- \eqref{eq:M2} are equivalent to the skewsymmetry of the bracket, while the other ones are equivalent to the validity of the Jacobi identity.
\end{thm}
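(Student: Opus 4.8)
The plan is to reduce everything to the generators by the Master Formula. By Theorem~\ref{thm:master}, the $\lambda$-bracket \eqref{eq:DNbiv} extends uniquely to a sesquilinear, Leibniz bilinear operation on all of $\hat{\A}$, and the skewsymmetry \eqref{eq:LambdaSkew} and the PVA-Jacobi identity \eqref{eq:LambdaJac} hold on $\hat{\A}$ if and only if they hold on the generators $u^1,\dots,u^n$. So it suffices to impose those two axioms on \eqref{eq:DNbiv}. I would organize the bookkeeping by the grading of Section~\ref{ssec:PVACohom}: the bracket \eqref{eq:DNbiv} is homogeneous of degree~$1$, hence skewsymmetry is an identity of degree~$1$ and PVA-Jacobi an identity of degree~$2$; matching the finitely many monomials in $\mlambda$, $\mmu$ and in the jet variables $\dev_\alpha u^k$, $\dev_\alpha\dev_\beta u^k$ of the appropriate degree produces the closed system \eqref{eq:M1}--\eqref{eq:M7}.

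First I would dispatch the skewsymmetry, which is the short half. Writing \eqref{eq:DNbiv} as $\{u^i_{\mlambda}u^j\}=g^{ij\alpha}\lambda_\alpha+b^{ij\alpha}_k\dev_\alpha u^k$, the only nonconstant coefficient is $g^{ij\alpha}$, on which the operator $(-\lambda_\alpha-\dev_\alpha)$ acts to give $-\lambda_\alpha g^{ij\alpha}-\tfrac{\dev g^{ij\alpha}}{\dev u^k}\dev_\alpha u^k$; thus ${}_{\to}\{u^i_{-\mlambda-\mdev}u^j\}$ is computed explicitly. Comparing $-{}_{\to}\{u^i_{-\mlambda-\mdev}u^j\}$ with $\{u^j_{\mlambda}u^i\}=g^{ji\alpha}\lambda_\alpha+b^{ji\alpha}_k\dev_\alpha u^k$, the coefficient of $\lambda_\alpha$ yields \eqref{eq:M1} and the coefficient of $\dev_\alpha u^k$ yields \eqref{eq:M2}. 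This confirms that \eqref{eq:M1}--\eqref{eq:M2} are exactly the skewsymmetry conditions.

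The substance is the PVA-Jacobi identity $\{u^i_{\mlambda}\{u^j_{\mmu}u^k\}\}-\{u^j_{\mmu}\{u^i_{\mlambda}u^k\}\}=\{\{u^i_{\mlambda}u^j\}_{\mlambda+\mmu}u^k\}$. I would compute each of the three triple brackets by repeatedly applying the right Leibniz rule \eqref{eq:rLeib} and right sesquilinearity \eqref{eq:rsesqui} to the two left-hand terms, and the left Leibniz rule \eqref{eq:lLeib} and left sesquilinearity \eqref{eq:lsesqui} to the right-hand term (where $\{u^i_{\mlambda}u^j\}$ occupies the first slot). Every step reduces to \eqref{eq:DNbiv} through the two rules $\{u^i_{\mlambda}f(u)\}=\tfrac{\dev f}{\dev u^l}\{u^i_{\mlambda}u^l\}$ and $\{u^i_{\mlambda}\dev_\beta u^a\}=(\lambda_\beta+\dev_\beta)\{u^i_{\mlambda}u^a\}$, with the right-hand side handled symmetrically by the operators $(\mlambda+\mmu+\mdev)$ acting on the first argument. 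After bringing all three expressions to a common normal form — all total derivatives $\mdev$ moved to the right of the structure functions, all powers of $\mlambda,\mmu$ collected in front — I would equate coefficients monomial by monomial, using \eqref{eq:M1}--\eqref{eq:M2} to trade the symmetric combination $b^{ij\alpha}_k+b^{ji\alpha}_k$ for $\tfrac{\dev g^{ij\alpha}}{\dev u^k}$ wherever it shortens the algebra. The coefficients organize into three tiers: the purely polynomial coefficients of $\lambda_\alpha\mu_\beta$, which carry no derivatives of the $b$'s, give the algebraic relations \eqref{eq:M3} and \eqref{eq:M4}, the cyclic and pairwise symmetrizations there reflecting the $(i,\mlambda)\leftrightarrow(j,\mmu)$ antisymmetry of the two left-hand terms; the coefficients of the mixed monomials $\lambda_\alpha\dev_\beta u^r$ and $\mu_\beta\dev_\alpha u^r$ produce \eqref{eq:M5} and \eqref{eq:M6}, which couple single derivatives $\tfrac{\dev b}{\dev u}$ with quadratic products $bb$; and the coefficients of the degree-two jet monomials $\dev_\alpha\dev_\beta u^r$ and $\dev_\alpha u^r\,\dev_\beta u^s$ give the remaining relation \eqref{eq:M7}, in which the second derivatives arising from the right-hand term become, after \eqref{eq:M2}, the $\tfrac{\dev}{\dev u^s}$ and $\tfrac{\dev}{\dev u^r}$ applied to the expressions of \eqref{eq:M5}.

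The main obstacle is precisely this last bookkeeping. The left-hand side is manifestly a polynomial in the two independent indeterminates $\mlambda$ and $\mmu$ separately, whereas the right-hand term $\{\{u^i_{\mlambda}u^j\}_{\mlambda+\mmu}u^k\}$ is produced as a polynomial in the combined variable $\mlambda+\mmu$ with the total derivative entangled through \eqref{eq:lLeib}; re-expanding $(\mlambda+\mmu+\mdev)^N$ by the multi-index binomial \eqref{eq:binomMulti} and separating what each $\mdev$ differentiates — a structure function versus a jet coordinate $\dev_\beta u^a$ — is where the sign and combinatorial errors are most likely to creep in. To control this I would use the exponential notation \eqref{eq:expoform} to track which factor every derivation hits, and I would verify the homogeneity of each resulting relation as a consistency check. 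Finally, establishing sufficiency — that \eqref{eq:M3}--\eqref{eq:M7} are not merely necessary — reduces to checking that the listed relations exhaust the coefficients of \emph{all} degree-two monomials, so that their vanishing forces the full Jacobi identity; this completes the equivalence.
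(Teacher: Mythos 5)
Your proposal is correct and is essentially the paper's own proof: impose the two PVA axioms directly on the generators (as justified by Theorem~\ref{thm:master}), read \eqref{eq:M1} and \eqref{eq:M2} off the coefficients of $\lambda_\alpha$ and of $\dev_\alpha u^k$ in the skewsymmetry condition, then expand the PVA-Jacobi identity among three generators via the master formula and set to zero the coefficients of all degree-two monomials to obtain \eqref{eq:M3}--\eqref{eq:M7}, sufficiency following because that list of monomials is exhaustive. The one detail where your bookkeeping differs from the paper's (and from what the expansion actually yields) is the matching of monomials to conditions: it is $\lambda_\alpha\lambda_\beta$ for \eqref{eq:M3}, $\lambda_\alpha\mu_\beta$ for \eqref{eq:M4}, $u^r_{\alpha\beta}$ for \eqref{eq:M5}, $u^r_\alpha\lambda_\beta$ for \eqref{eq:M6} and $u^r_\alpha u^s_\beta$ for \eqref{eq:M7} --- not $\lambda_\alpha\mu_\beta$ producing both \eqref{eq:M3} and \eqref{eq:M4}, nor $u^r_{\alpha\beta}$ folded into \eqref{eq:M7}, since the $g\,\dev^2 b$ terms of \eqref{eq:M7} can only arise accompanied by $\dev_\alpha u^r\,\dev_\beta u^s$ monomials while the $(\alpha,\beta)$-symmetrized \eqref{eq:M5} pairs with the symmetric monomial $u^r_{\alpha\beta}$ --- but this slip is harmless, as carrying out the computation you describe would fix the correspondence automatically.
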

\begin{proof}
We explicitly impose the skewsymmetry condition \eqref{eq:LambdaSkew} and the PVA-Jacobi identity \eqref{eq:LambdaJac} for the bracket \eqref{eq:DNbiv} among three generators of $\hat{\A}$. The vanishing of the first degree terms in $\lambda_\alpha$ for \eqref{eq:LambdaSkew} are the conditions \eqref{eq:M1}, while the vanishing of the coefficients of $u^k_\alpha$ are \eqref{eq:M2}. We then use the master formula to compute \eqref{eq:LambdaJac}. It gives a degree 2 differential polynomial in the $\lambda$'s and the $\mu$'s. The remaining conditions are the vanishing of the coefficients for, respectively, $\lambda_\alpha\lambda_\beta$, $\lambda_\alpha\mu_\beta$ (the coefficients for $\mu\leftrightarrow\lambda$ are equivalent, provided the skewsymmetry), $u^r_{\alpha\beta}$, $u^r_\alpha\lambda_\beta$, and $u^r_\alpha u^s_\beta$.
\end{proof}
\subsection{Symmetries of the Poisson brackets of hydrodynamic type}\label{ssec:symHYPB}
\begin{deff}A \emph{$n$-th order symmetry} of a PVA $(\hat{\A},\{\cdot_{\mlambda}\cdot\})$ is an evolutionary vector field $\xi\in\mathrm{Der}(\hat{\A})$, $[\xi,\dev_\alpha]=0$ $\forall \alpha$, with the following properties:
\begin{enumerate}
\item The components $X^i$ of the vector field (see \eqref{eq:evVF_PVA}) are homogeneous differential polynomials of order $n$; 
\item $\xi\left(\{f_{\mlambda}g\}\right)=\{\xi(f)_{\mlambda}g\}+\{f_{\mlambda}\xi(g)\}$
\end{enumerate}
\end{deff}
\begin{deff}\label{def:HamVF}A \emph{Hamiltonian vector field} in the context of PVA \cite{BdSK09} is an evolutionary vector field $\xi_H$ whose components $X^i$ are
\begin{equation}
 X^i(u_J)=\xi_H(u^i)=\{H_{\mlambda}u^i\}\vert_{\mlambda=0}
\end{equation}
for $H\in\hat{\A}$.
\end{deff}
In terms of PVA cohomology, a symmetry is a cocycle in $\Omega^1(\hat{\A},\{\cdot_{\mlambda}\cdot\})$ and a Hamiltonian vector field is a coboundary in the same space.

The fact that a Hamiltonian vector field, in the terms we defined it, is a symmetry of the $\lambda$-bracket is easily obtained from the PVA-Jacobi \eqref{eq:LambdaJac} identity after setting $\mlambda=0$
\begin{equation}
\begin{split}
\{H_{\mlambda}\{f_{\mmu}g\}\}\vert_{\mlambda=0}&=\left(\{f_{\mmu}\{H_{\mlambda}g\}\}+\{\{H_{\mlambda}f\}_{\mlambda+\mmu}g\}\right)\vert_{\mlambda=0}\\
\xi_H(\{f_{\mmu}g\})&=\{f_{\mmu}\xi_H(g)\}+\{\xi_H(f)_{\mmu}g\}
\end{split}
\end{equation}
On the other hand, the classification of the symmetries of the $\lambda$-bracket allows us to characterize the first PVA-cohomology group and, in the terms of section \ref{ssec:PVACohom}, the Poisson-Lichnerowicz cohomology of the associated Poisson bracket.

In the rest of this section, we investigate the first order symmetries of the Poisson brackets of hydrodynamic type for $d=n=2$. We will denote the generators of the algebra of differential polynomials $\hat{\A}$ as $(p_1\equiv p, p_2\equiv q)$. Ferapontov and collaborators provide in \cite{FOS11} a classification, based on Mokhov's results \cite{M08}, of all the undeformed Poisson structures on such a space up to Miura transformations and linear change of the independent variables. They are, in terms of $\lambda$-brackets,
\begin{align}\label{eq:defP1}
 \{{p_i}_{\mlambda}{p_j}\}_1&=\delta_{ij}\lambda_i\\ \label{eq:defP2}
 \{{p_i}_{\mlambda}{p_j}\}_2&=\delta_{i+j,3}\lambda_1+\delta_{ij}\delta_{j2}\lambda_2\\ \label{eq:LPPVA}
\left\{{p_i}_{\mlambda}{p_j}\right\}_{LP}&=-\left(p_i\lambda_j+p_j\lambda_i+\dev_ip_j\right).
\end{align}
The essential difference, under the point of view of the classification, is that the two strucures $\{\cdot_{\mlambda}\cdot\}_1$ and $\{\cdot_{\mlambda}\cdot\}_2$ are constant, while $\{\cdot_{\mlambda}\cdot\}_{LP}$ is not. Mokhov observed that the brackets that are essentially nonconstant can always be written in a coordinate system for which they are of form $\{\cdot_{\mlambda}\cdot\}_{LP}$,a  structure that has been introduced by Novikov in \cite{N82} as a Lie--Poisson bracket of hydrodynamic type.

\paragraph{Lie--Poisson brackets of hydrodynamic type}
We consider the Lie algebra $\mathfrak{g}=\X(\Sigma)$ of the vector fields on a manifold, let us say a $d$-dimensional torus. It has been known for long time that this algebra is tightly related to the Euler's equation for ideal fluids (\cite{Ar66}). In some coordinates such vector fields can be written as $X(\vec{x})=\sum X^i(\vec{x})\dev_i$, $i=1\ldots d$; the components of their commutator are $[X,Y]^i(\vec{x})=\sum X^j(\vec{x})\dev_j Y^i(\vec{x})-Y^j(\vec{x})\dev_jX^i(\vec{x})$. This implies that the structure functions of $\mathfrak{g}$ must have the form $C^i_{jk}(\vec{x},\vec{y},\vec{z})=\delta^i_j\delta(\vec{z}-\vec{x})\dev_k\delta(\vec{y}-\vec{z})-\delta^i_k\delta(\vec{y}-\vec{x})\dev_j\delta(\vec{z}-\vec{y})$. It is well known that, given a Lie algebra $\mathfrak{g}$, it is always possible to endow its dual space $\mathfrak{g}^*$ with a Poisson bracket called the \emph{Lie--Poisson bracket}. In this setting, the coordinates on $\mathfrak{g}^*$ are a set of functions $p_i(\vec{x})$ such that
\begin{equation*}
 \int p_i(\vec{x})v^i(\vec{x})\ud\vec{x}
\end{equation*}
behaves as a scalar under change of variables. Here, $v^i(\vec{x})$ are the components of a vector field. This means that $p_i(\vec{x})$ are densities of 1-forms. The Lie--Poisson bracket is linear in the coordinates and defined by the structure functions as
\begin{equation}\label{eq:LPbra}
 \{p_j(\vec{y}),p_k(\vec{z})\}=\int C^i_{jk}(\vec{x},\vec{y},\vec{z})p_i(\vec{x})\ud\vec{x}.
\end{equation}
In the same fashion as the general case we dealt with in the previous section, we define the $\lambda$-bracket for the generators of the differential polynomial algebra $C^{\infty}(p_i)[p_{iI}]$. Regarding the $p$'s and their derivatives in \eqref{eq:LPbra} as independent variables, in the spirit of jet bundles and as we defined in Section \ref{ssec:mapspace}, we drop their dependence on the points of $\Sigma$. For $d=2$, we get \eqref{eq:LPPVA} as the result. Notice that the form of the Lie--Poisson $\lambda$-bracket would be the same for any $d=n$.
\paragraph*{}
Let us consider each of the normal forms for the $\lambda$-bracket of hydrodynamic type. We will compute the action of an evolutionary vector field on the brackets between two generators in order to characterize the conditions it must satisfy in order to be a symmetry of the brackets themselves. We restrict ourselves to consider only first order vector fields, whose components can be written as
\begin{equation}\label{eq:VField1Ord}
 \xi(p_i)=X_i(p,p_I)=A^{ab}(p)\dev_ap_b
\end{equation}
where each index runs from 1 to $d=n=2$ and we follow the Einstein convention for the sum over repeated indices.

The conditions for $\xi$ to be a symmetry can be directly computed and are summarized in the following lemmas.
\begin{lemma}\label{thm:SymP1}
An evolutionary vector field of form \eqref{eq:VField1Ord} is a first order symmetry of the bracket \eqref{eq:defP1} if and only if the following conditions hold:
\begin{subequations}\label{eq:1stOrderSymP1}
\begin{align}\label{eq:1stOrderSymP1a}
A^{ab}_j\delta^b_i+A^{ba}_j\delta^a_i-A^{ab}_i\delta^b_j-A^{ba}_i\delta^a_j&=0\\\label{eq:1stOrderSymP1b}
\frac{\dev A^{bl}_i}{\dev p_j}\delta^a_j-\frac{\dev A^{aj}_i}{\dev p_l}\delta^b_j-\frac{\dev A^{bj}_i}{\dev p_l}\delta^a_j+\frac{\dev A^{bl}_j}{\dev p_i}\delta^a_i&=0\\\label{eq:1stOrderSymP1c}
\frac{\dev^2A^{al}_i}{\dev p_j\dev p_m}\delta^b_j+\frac{\dev^2A^{bm}_i}{\dev p_j\dev p_l}\delta^a_j-\frac{\dev^2A^{aj}_i}{\dev p_l\dev p_m}\delta^b_j-\frac{\dev^2A^{bj}_i}{\dev p_l\dev p_m}\delta^a_j&=0\\\label{eq:1stOrderSymP1d}
\frac{\dev A^{al}_i}{\dev p_j}\delta^b_j+\frac{\dev A^{bl}_i}{\dev p_j}\delta^a_j-\frac{\dev A^{aj}_i}{\dev p_l}\delta^b_j-\frac{\dev A^{bj}_i}{\dev p_l}\delta^a_j&=0
\end{align}
\end{subequations}
In particular, for the case $d=n=2$, the solutions of \eqref{eq:1stOrderSymP1} for $A^{ab}_i(p,q)$ are
\begin{equation}\label{eq:1stOrderSymP1EXPL}
 \begin{split}
   A^{11}_1=\frac{\dev^2 K}{\dev p^2}\qquad\qquad&A^{11}_2=0\\
A^{12}_1=\frac{\dev^2 K}{\dev p\dev q}\qquad\qquad&A^{12}_2=c_1\\
A^{21}_1=c_2\qquad\qquad&A^{21}_2=\frac{\dev^2 K}{\dev p\dev q}\\
A^{22}_1=0\qquad\qquad&A^{22}_2=\frac{\dev^2 K}{\dev q^2}
 \end{split}
\end{equation}
where $c_1$ and $c_2$ are constants and $K=K(p,q)$ is a generic function of the coordinates.
\end{lemma}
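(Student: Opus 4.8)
The plan is to exploit the fact that the undeformed bracket \eqref{eq:defP1} has constant coefficients. Since $\{{p_i}_{\mlambda}{p_j}\}_1=\delta_{ij}\lambda_i$ does not depend on the jet variables, any evolutionary field $\xi$ annihilates it, so property 2 of the definition of a symmetry collapses to the single requirement $\{\xi(p_i)_{\mlambda}p_j\}_1+\{{p_i}_{\mlambda}\xi(p_j)\}_1=0$ for every pair $i,j$. Substituting $\xi(p_i)=A^{ab}_i(p)\,\dev_a p_b$, I would expand the two brackets either with the master formula (Theorem \ref{thm:master}) or, equivalently, by direct application of the left and right Leibniz rules \eqref{eq:rlLeibDef} together with sesquilinearity \eqref{eq:sesquilDef} and the skewsymmetry \eqref{eq:LambdaSkew}, using that the only nonvanishing generator brackets are $\{{p_1}_{\mlambda}{p_1}\}_1=\lambda_1$ and $\{{p_2}_{\mlambda}{p_2}\}_1=\lambda_2$.

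Both brackets are homogeneous of degree $2$ in the grading $\deg u^i_I=|I|$, $\deg\lambda_\alpha=1$; hence their sum is a differential polynomial built out of exactly four types of monomials: $\lambda_c\lambda_d$, $\lambda_c(\dev_d p_e)$, $\dev_{cd}p_e$ and $(\dev_c p_e)(\dev_d p_f)$. Their coefficients carry, respectively, the undifferentiated $A^{ab}_i$, their first derivatives $\dev A^{ab}_i/\dev p_k$ (appearing twice, since the $(\mlambda+\mdev)$ operators distribute a single total derivative in two inequivalent ways), and their second derivatives. Imposing that each coefficient vanish — after symmetrising the coefficients of $\lambda_c\lambda_d$ and of $(\dev_c p_e)(\dev_d p_f)$, which multiply symmetric monomials — yields exactly \eqref{eq:1stOrderSymP1a} from the $\lambda_c\lambda_d$ part, \eqref{eq:1stOrderSymP1b} from the $\lambda_c(\dev_d p_e)$ part, \eqref{eq:1stOrderSymP1d} from the $\dev_{cd}p_e$ part, and \eqref{eq:1stOrderSymP1c} from the $(\dev_c p_e)(\dev_d p_f)$ part. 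This establishes the `if and only if'.

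To obtain the explicit solution for $d=n=2$ I would first solve the purely algebraic system \eqref{eq:1stOrderSymP1a}. Testing it on the index choices $i\neq j$ immediately forces $A^{11}_2=A^{22}_1=0$ together with the single identification $A^{12}_1=A^{21}_2$, leaving five unknown functions. The first-order equations \eqref{eq:1stOrderSymP1b} and \eqref{eq:1stOrderSymP1d} then split into two groups: one group sets the derivatives of the off-diagonal entries $A^{12}_2$ and $A^{21}_1$ to zero, so that $A^{12}_2=c_1$ and $A^{21}_1=c_2$ are constants; the other group reduces to the compatibility relations $\dev A^{11}_1/\dev q=\dev A^{12}_1/\dev p$ and $\dev A^{12}_1/\dev q=\dev A^{22}_2/\dev p$. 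By the Poincaré lemma these are precisely the integrability conditions guaranteeing the local existence of a single potential $K(p,q)$ with $A^{11}_1=\dev^2 K/\dev p^2$, $A^{12}_1=A^{21}_2=\dev^2 K/\dev p\,\dev q$ and $A^{22}_2=\dev^2 K/\dev q^2$, which is \eqref{eq:1stOrderSymP1EXPL}. Finally one checks that the second-order system \eqref{eq:1stOrderSymP1c} is automatically satisfied by this Hessian ansatz, so it imposes nothing new.

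The expansion in the first part is the main bookkeeping burden: the delicate point is to track how each $(\mlambda+\mdev)$ acts — on the bracket coefficients in $\{\xi(p_i)_{\mlambda}p_j\}_1$ through the left Leibniz rule, versus on the factor $A^{ab}_i$ in $\{{p_i}_{\mlambda}\xi(p_j)\}_1$ — so that the first-derivative contributions are sorted correctly between \eqref{eq:1stOrderSymP1b} and \eqref{eq:1stOrderSymP1d} and nothing is double counted. In the second part the genuine step is recognising the first-order constraints as exactness conditions for the Hessian of $K$ and separating the constant off-diagonal entries; once this is seen, verifying that \eqref{eq:1stOrderSymP1c} is redundant is a routine consistency check rather than an independent computation.
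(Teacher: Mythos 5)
Your proposal is correct and follows essentially the same route as the paper: the symmetry condition is imposed on the brackets of generators (the paper writes the term $\xi(\{{p_i}_{\mlambda}p_j\}_1)$ explicitly, which vanishes for the constant bracket exactly as you observe), the coefficients of the four monomial types $\lambda_a\lambda_b$, $\lambda_a\dev_bp_l$, $\dev_ap_l\dev_bp_m$, $\dev_{ab}p_l$ are set to zero to obtain \eqref{eq:1stOrderSymP1}, and for $d=n=2$ the algebraic equations \eqref{eq:1stOrderSymP1a} give $A^{11}_2=A^{22}_1=0$, $A^{12}_1=A^{21}_2$ before the first-order equations are integrated to the Hessian form \eqref{eq:1stOrderSymP1EXPL}. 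Your treatment of \eqref{eq:1stOrderSymP1d} as carrying the same information as \eqref{eq:1stOrderSymP1b}, and of \eqref{eq:1stOrderSymP1c} as an automatic consequence, also matches the paper's closing remarks in its proof.
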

\begin{proof}Recalling that $\xi(p_i)=A^{ab}_i\dev_ap_b$ and $\xi(f)=\mdev^I\left(A^{ab}_i\dev_ap_b\right)\dev f/\dev p_{iI}$ we compute $\xi(\{{p_i}_{\mlambda}p_j\}-\{A^{ab}_i\dev_ap_b{}_{\mlambda}p_j\}-\{p_i{}_{\mlambda}A^{ab}_j\dev_ap_b\}$ and set to 0 the coefficients of the four terms $\lambda_a\lambda_b$, $\lambda_a\dev_bp_l$, $\dev_ap_l\dev_bp_m$, and $\dev_{ab}p_l$. This procedure gives the set of equations \eqref{eq:1stOrderSymP1}; for the case $d=n=2$ we explicitly write down the algebraic equations \eqref{eq:1stOrderSymP1a} that imply $A^{11}_2=0$, $A^{22}_1=0$, and $A^{12}_1=A^{21}_2$. The complete solution is then easily found using \eqref{eq:1stOrderSymP1b}. Indeed, equations \eqref{eq:1stOrderSymP1d} turn out to be equivalent to \eqref{eq:1stOrderSymP1b} and \eqref{eq:1stOrderSymP1c} are differential consequences of that.
\end{proof}
\begin{lemma}\label{thm:SymP2}
 An evolutionary vector field of form \eqref{eq:VField1Ord} is a first order symmetry of the bracket \eqref{eq:defP2} if and only if the following conditions hold:
\begin{equation}\label{eq:1stOrderSymP2}
\begin{split}
A^{12}_2=A^{11}_1\qquad\qquad&A^{22}_1=0\\
A^{12}_1+A^{21}_1=A^{22}_2\qquad\qquad&\\
A^{21}_2-A^{11}_1=c_1\qquad\qquad&A^{12}_1-A^{22}_2=c_2\\
\frac{\dev A^{11}_2}{\dev q}-\frac{\dev A^{21}_2}{\dev p}=0\qquad\qquad&\frac{\dev A^{21}_2}{\dev q}-\frac{\dev A^{22}_2}{\dev p}=0
\end{split}
\end{equation}
\end{lemma}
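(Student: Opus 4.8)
The plan is to follow the scheme already used for the constant structure in Lemma \ref{thm:SymP1}, exploiting the key feature that the bracket \eqref{eq:defP2} is \emph{constant}: its coefficients are real numbers, not differential polynomials. Indeed $\{{p_1}_{\mlambda}p_2\}_2=\{{p_2}_{\mlambda}p_1\}_2=\lambda_1$, $\{{p_2}_{\mlambda}p_2\}_2=\lambda_2$ and $\{{p_1}_{\mlambda}p_1\}_2=0$. Since an evolutionary vector field annihilates constants, the left-hand side $\xi(\{{p_i}_{\mlambda}p_j\}_2)$ of the symmetry condition (property 2 in the definition of an $n$-th order symmetry) vanishes identically on the generators, so the condition to be analysed collapses to
\begin{equation*}
 \{\xi(p_i)_{\mlambda}p_j\}_2+\{{p_i}_{\mlambda}\xi(p_j)\}_2=0,\qquad i,j\in\{1,2\},
\end{equation*}
with $\xi(p_i)=A^{ab}_i\dev_a p_b$ as in \eqref{eq:VField1Ord}. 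This is the direct analogue of the reduction in Lemma \ref{thm:SymP1}, and considerably sparser than the general Mokhov computation of Theorem \ref{thm:Mokhov} because so many brackets between generators vanish.

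Next I would expand the two brackets by the PVA axioms, or equivalently via the master formula of Theorem \ref{thm:master}. For the second bracket the right Leibniz rule \eqref{eq:rLeib} and right sesquilinearity \eqref{eq:rsesqui} give, after the derivation produced by \eqref{eq:rsesqui} kills the constant coefficients,
\begin{equation*}
 \{{p_i}_{\mlambda}\xi(p_j)\}_2=\frac{\dev A^{ab}_j}{\dev p_c}\{{p_i}_{\mlambda}p_c\}_2\,\dev_a p_b+\lambda_a\{{p_i}_{\mlambda}p_b\}_2\,A^{ab}_j.
\end{equation*}
The first bracket $\{\xi(p_i)_{\mlambda}p_j\}_2$ is expanded symmetrically, using either the left Leibniz rule \eqref{eq:lLeib} with left sesquilinearity \eqref{eq:lsesqui}, or first the skewsymmetry \eqref{eq:LambdaSkew}; the only new feature is that in the left rule the operators $\mlambda+\mdev$ act also on the coefficients $A^{ab}_i$, producing genuine terms in $\dev_a p_l$ and $\dev_{ab}p_l$ from $\dev A^{ab}_i/\dev p_c$ and $\dev^2 A^{ab}_i/\dev p_c\dev p_e$.

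Adding the two contributions yields a polynomial of degree at most two in $\mlambda$ whose coefficients are differential polynomials of the four types that already appeared in the $P_1$ case: $\lambda_a\lambda_b$, $\lambda_a\dev_b p_l$, $\dev_a p_l\,\dev_b p_m$ and $\dev_{ab}p_l$. Equating each coefficient to zero produces the system. I expect the $\mlambda$-quadratic monomials $\lambda_a\lambda_b$ to give the purely algebraic relations $A^{22}_1=0$, $A^{12}_2=A^{11}_1$ and $A^{12}_1+A^{21}_1=A^{22}_2$; the mixed monomials $\lambda_a\dev_b p_l$, which involve only first derivatives of the $A^{ab}_i$, to force the gradients of $A^{21}_2-A^{11}_1$ and $A^{12}_1-A^{22}_2$ to vanish, whence these combinations are constant (the free constants $c_1,c_2$); and the $\mlambda$-independent monomials $\dev_a p_l\,\dev_b p_m$ and $\dev_{ab}p_l$ to give the two curl-type conditions $\dev_q A^{11}_2-\dev_p A^{21}_2=0$ and $\dev_q A^{21}_2-\dev_p A^{22}_2=0$, with the second-derivative equations being differential consequences of these. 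Collecting and eliminating redundancies then reproduces exactly \eqref{eq:1stOrderSymP2}.

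The conceptual content here is light and the genuine difficulty is bookkeeping, as in Lemma \ref{thm:SymP1}. The main care is required in two places: correctly tracking on which factors the total derivatives in the left Leibniz rule \eqref{eq:lLeib} act, and recognising which of the many first-order linear equations obtained are independent, so as to trim the overcomplete system down to the minimal set \eqref{eq:1stOrderSymP2} without introducing spurious constraints. The appearance of the free constants $c_1,c_2$ is the one structural difference from the $P_1$ answer: there the symmetries integrated to a single potential $K$, whereas here two of the first-order equations integrate to \emph{constant} relations rather than to a potential.
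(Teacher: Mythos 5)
Your proposal is correct and is essentially the paper's own proof: the paper likewise computes the symmetry condition directly on the generators for $d=n=2$ (exploiting, as you do, that the bracket \eqref{eq:defP2} is constant so only $\{\xi(p_i)_{\mlambda}p_j\}_2+\{{p_i}_{\mlambda}\xi(p_j)\}_2$ survives), uses the algebraic equations coming from the $\lambda_a\lambda_b$ coefficients to eliminate unknowns, and then reduces the remaining linear PDEs to the normal form \eqref{eq:1stOrderSymP2}. The only difference is instrumental: the paper performs that last reduction by computing a Janet basis with the Maple package \texttt{Janet}, whereas you propose to trim the overdetermined system by hand, which the paper itself concedes is feasible in this case (``the original system is not very complicated and could be simplified also by hand'').
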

\begin{proof}
The particular form of the bracket \eqref{eq:defP2} makes explicitly computing the symmetry condition in the case $d=n=2$ the most effective approach to the problem. As in Lemma \ref{thm:SymP1}, the coefficients of the terms $\lambda_a\lambda_b$ are algebraic equations, while the coefficients of the other terms are linear PDEs. Hence we can first reduce the number of unknowns for the differential equations. In order to simplify the set of the remaining equations, we relied to a powerful computational tool which is called a \emph{Janet basis} for the linear system of PDEs \cite{PR05}. It provides the normal form for the system, which is unique up to the ordering of variables, and can be computed using the Maple package \texttt{Janet} \cite{BC03}.
\end{proof}
In this particular case, the original system is not very complicated and could be simplified also by hand. Nevertheless, the search for a Janet basis of a linear system of PDEs will be necessary for the more involved systems we will consider next.

With the same technique used to prove Lemma \ref{thm:SymP1}, and then computing the Janet basis for the explicit formula of the symmetry conditions as we did in Lemma \ref{thm:SymP2}, we can characterize the symmetries of \eqref{eq:LPPVA}. The results are summarized in the following lemma.
\begin{lemma}\label{thm:SymLP}
An evolutionary vector field of form \eqref{eq:VField1Ord} is a first order symmetry of the bracket \eqref{eq:LPPVA} if and only if the following conditions hold:
\begin{subequations}\label{eq:1stOrderSymLP}
\begin{align}
A^{al}_j\frac{\dev}{\dev p_b}\left(p_ip_l\right)-A^{al}_i\frac{\dev}{\dev p_b}\left(p_jp_l\right)+A^{bl}_j\frac{\dev}{\dev p_a}\left(p_ip_l\right)-A^{bl}_i\frac{\dev}{\dev p_a}\left(p_jp_l\right)&=0\\\notag
A^{ba}_j\delta^l_i+A^{al}_j\delta^b_i-A^{bl}_i\delta^a_j-A^{ab}_i\delta^l_j+\frac{\dev A^{bl}_i}{\dev p_m}\frac{\dev}{\dev p_a}\left(p_mp_j\right)+\qquad\qquad&\\
-\frac{\dev A^{am}_i}{\dev p_l}\frac{\dev}{\dev p_b}\left(p_mp_j\right)-\frac{\dev A^{bm}_i}{\dev p_l}\frac{\dev}{\dev p_a}\left(p_mp_j\right)-\frac{\dev A^{bl}_j}{\dev p_m}\frac{\dev}{\dev p_a}\left(p_ip_m\right)&=0\\
\left(\frac{\dev^2A^{al}_i}{\dev p_s\dev p_m}-\frac{\dev^2A^{as}_i}{\dev p_l\dev p_m}\right)p_s\delta^b_j+\left(\frac{\dev^2A^{bm}_i}{\dev p_s\dev p_l}-\frac{\dev^2A^{bs}_i}{\dev p_l\dev p_m}\right)p_s\delta^a_j+&\\
\left(\frac{\dev^2A^{bm}_i}{\dev p_a\dev p_l}-\frac{\dev^2A^{ab}_i}{\dev p_l\dev p_m}\right)p_j+\left(\frac{\dev^2A^{al}_i}{\dev p_b\dev p_m}-\frac{\dev^2A^{ba}_i}{\dev p_l\dev p_m}\right)p_j+&\\\notag
+\left(\frac{\dev A^{al}_i}{\dev p_b}-\frac{\dev A^{ab}_i}{\dev p_l}\right)\delta^m_j+\left(\frac{\dev A^{bm}_i}{\dev p_a}-\frac{\dev A^{ba}_i}{\dev p_m}\right)\delta^l_j&=0\\\notag
\left(\frac{\dev A^{al}_i}{p_s}-\frac{\dev A^{as}_i}{\dev p_l}\right)\frac{\dev}{\dev p_b}\left(p_sp_j\right)+\left(\frac{\dev A^{bl}_i}{p_s}-\frac{\dev A^{bs}_i}{\dev p_l}\right)\frac{\dev}{\dev p_a}\left(p_sp_j\right)&=0
\end{align}
\end{subequations}
For the case $d=n=2$, the algebraic equations allow us to express $A^{21}_1$, $A^{12}_2$, and $A^{21}_2$ in terms of the remaining five functions; we then compute the Janet basis of the remaining set of linear PDEs. After this procedure, \eqref{eq:1stOrderSymLP} in the two-dimensional case is reduced to the system
\begin{subequations}
\begin{align*}
 A^{21}_1&=A^{22}_2-\frac{2q}{p}A^{22}_1&A^{12}_2&=A^{11}_1-\frac{2p}{q}A^{11}_2\\
A^{21}_2&=\frac{p^3A^{11}_2+pq^2A^{12}_1-q^3A^{22}_1}{p^2q}
\end{align*}
\begin{align}\label{eq:1stOrderSymLPdiff}
 2\frac{\dev A^{22}_2}{\dev q}p+ \frac{\dev A^{12}_1}{\dev q} p+3\frac{\dev A^{22}_1}{\dev q}q+5 A^{22}_1&=0\\
\frac{\dev A^{22}_1}{\dev p}p+2\frac{\dev A^{22}_1}{\dev q}q+2A^{22}_1 - \frac{\dev A^{22}_2}{\dev q}p&=0\\
\frac{\dev A^{11}_1}{\dev p}q -2\frac{\dev A^{11}_2}{\dev p}p -\frac{\dev A^{11}_2}{\dev q}q -2A^{11}_2&=0
\end{align}
\begin{align}
\frac{\dev A^{11}_1 }{\dev q}p^2q^2-2\frac{\dev A^{11}_2}{\dev q}p^3q+2\frac {\dev A^{22}_1}{\dev q}q^4-\frac{\dev A^{22}_2}{\dev q}pq^3+&\\\notag
2A^{11}_2p^3+4A^{22}_1q^3-A^{12}_1pq^2&=0\\
4\frac{\dev A^{22}_1}{\dev q}q^4-\frac{\dev A^{11}_2}{\dev q}p^3q+\frac{\dev A^{22}_2}{\dev p}p^2q^2-2\frac{\dev A^{22}_2}{\dev q}pq^3+&\\\notag
A^{11}_2p^3-A^{12}_1pq^2+7A^{22}_1 q^3&=0\\
\frac{\dev A^{12}_1}{\dev p}p^2q^2-2\frac{\dev A^{11}_2}{\dev q}p^3q+2\frac{\dev A^{22}_1}{\dev q}p^4-\frac{\dev A^{22}_2}{\dev q}pq^3+&\\\notag
2A^{11}_2p^3-A^{12}_1pq^2+4A^{22}_1q^3&=0
\end{align}
\end{subequations}
\end{lemma}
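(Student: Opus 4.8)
The strategy is the same as in Lemmas \ref{thm:SymP1} and \ref{thm:SymP2}: I impose the defining symmetry condition
\[
 \xi\bigl(\{{p_i}_{\mlambda}p_j\}_{LP}\bigr)=\{\xi(p_i)_{\mlambda}p_j\}_{LP}+\{{p_i}_{\mlambda}\xi(p_j)\}_{LP}
\]
on each of the three pairs of generators, with $\xi$ the evolutionary field \eqref{eq:VField1Ord}, $\xi(p_i)=A^{ab}_i\dev_ap_b$, and require the difference of the two sides to vanish identically as a polynomial in the $\lambda_\alpha$ and in the jet variables. The essential new feature compared with \eqref{eq:defP1} and \eqref{eq:defP2} is that the Lie--Poisson bracket \eqref{eq:LPPVA} is \emph{non-constant}: its coefficients depend on the $p_i$, so every application of the Leibniz and sesquilinearity rules produces extra terms in which the bracket's coefficients are differentiated.

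The left-hand side is the easiest: since $\xi$ is evolutionary it commutes with every $\dev_\alpha$ and acts as a derivation, so
\[
 \xi\bigl(\{{p_i}_{\mlambda}p_j\}_{LP}\bigr)=-\bigl(\lambda_j\,\xi(p_i)+\lambda_i\,\xi(p_j)+\dev_i\xi(p_j)\bigr).
\]
For the two brackets on the right I would use the master formula \eqref{eq:MasterFormula} (equivalently the left and right Leibniz rules \eqref{eq:lLeib}--\eqref{eq:rLeib} together with sesquilinearity) applied to \eqref{eq:LPPVA}. Because $\xi(p_i)$ is a differential polynomial of order one, the shifts $\mlambda+\mdev$ coming from sesquilinearity act on the coefficients $A^{ab}_i$ and generate terms carrying their first and second derivatives with respect to the $p$'s, together with terms proportional to $p_i$ coming from the non-constant part of the bracket. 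Assembling the three contributions gives a differential polynomial that is homogeneous of total degree two, counting $\deg\lambda_\alpha=1$ and $\deg\dev_\alpha p_b=1$.

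The four families of equations \eqref{eq:1stOrderSymLP} are then obtained by collecting the coefficients of the independent monomials $\lambda_a\lambda_b$, $\lambda_a\dev_bp_l$, $\dev_ap_l\dev_bp_m$ and $\dev_{ab}p_l$ and setting each of them to zero. Specialising to $d=n=2$, the purely algebraic relations (those from the $\lambda_a\lambda_b$ coefficients) let me solve for $A^{21}_1$, $A^{12}_2$ and $A^{21}_2$ in terms of the remaining five functions; substituting these into the differential relations leaves an overdetermined linear system of PDEs in five unknowns of $(p,q)$, whose canonical form I would obtain by computing a Janet basis with the \texttt{Janet} package \cite{BC03}, exactly as in Lemma \ref{thm:SymP2}. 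This yields the reduced system displayed in the statement.

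The main obstacle is purely computational bookkeeping. The non-constancy of \eqref{eq:LPPVA} multiplies the number of terms relative to the constant cases, and keeping track of how each $\mlambda+\mdev$ redistributes derivatives between the coefficients $A^{ab}_i$, the factors $\dev_ap_b$ and the bracket's own $p$-dependence is delicate. Likewise the final reduction of the overdetermined PDE system to its Janet normal form is not feasible by hand, so the computer algebra step is indispensable for presenting the result in the compact form above.
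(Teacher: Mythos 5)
Your proposal is correct and follows essentially the same route as the paper: apply the derivation property of the evolutionary field to the left-hand side, expand the two right-hand brackets via the master formula (Leibniz rules plus sesquilinearity), set to zero the coefficients of $\lambda_a\lambda_b$, $\lambda_a\dev_bp_l$, $\dev_ap_l\dev_bp_m$ and $\dev_{ab}p_l$ to get \eqref{eq:1stOrderSymLP}, then for $d=n=2$ eliminate $A^{21}_1$, $A^{12}_2$, $A^{21}_2$ via the algebraic ($\lambda_a\lambda_b$) relations and reduce the remaining linear PDEs to a Janet basis. This is exactly the paper's stated procedure ("the same technique used to prove Lemma \ref{thm:SymP1}, and then computing the Janet basis \ldots as we did in Lemma \ref{thm:SymP2}"), including your observation that the non-constancy of the coefficients of \eqref{eq:LPPVA} is what produces the extra $p$-dependent and derivative terms absent in the constant cases.
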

We shift now into considering the form of the Hamiltonian vector fields for each normal form of the $\lambda$-bracket. From Definition \ref{def:HamVF} we can explicitly compute the coefficients $A^{ab}_i$ of a first order Hamiltonian vector field, provided that the Hamiltonian $h$ is a function of $(p, q)$ only. Hamiltonian functions homogeneous of differential degree $p$ produce, for brackets of hydrodynamic type, vector fields of order $p+1$.

The Hamiltonian vector fields of the bracket \eqref{eq:defP1} can be immediately compared with \eqref{eq:1stOrderSymP1EXPL}, since for this easy case we have explicitly solved the symmetry conditions. For a Hamiltonian $h(p,q)$, the associated Hamiltonian vector field has form
\begin{equation}\label{eq:HamVF_P1}
 \xi_h(p_i)=A^{ab}_i\dev_ap_b=\frac{\dev^2 h}{\dev p_i\dev p_b}\delta^a_i\dev_ap_b.
\end{equation}
In particular, this means that the first cohomology group for $\{\cdot_{\mlambda}\cdot\}_1$ is not trivial, as opposite as what is known for 1-dimensional Poisson brackets of hydrodynamic type \cite{G01}. Indeed, there exists a family of non Hamiltonian symmetry depending on two arbitrary constants $c_1$ and $c_2$.

The Hamiltonian vector fields of the bracket \eqref{eq:defP2} can be easily computed, too. Their components are given in terms of the coefficients $A^{ab}_i$, which for an Hamiltonian $h(p,q)$ are
\begin{equation}\label{eq:HamVF_P2}
\begin{split}
 A^{11}_1=A^{12}_2=A^{21}_2&=\frac{\dev^2 h}{\dev p\dev q}\\
A^{21}_1=A^{22}_1&=0\\
A^{11}_2&=\frac{\dev^2 h}{\dev p^2}\\
A^{12}_1=A^{22}_2&=\frac{\dev^2h}{\dev q^2}
\end{split}
\end{equation}
There exist solutions of \eqref{eq:1stOrderSymP2} with $c_1,c_2\neq 0$, but such solutions are not Hamiltonian vector fields. For example, the vector field whose components are $A^{11}_1=A^{11}_2=A^{12}_2=A^{22}_1=A^{22}_2=0$, $A^{21}_2=c_1$, and $A^{12}_1=-A^{21}_1=c_2$ is a symmetry but cannot be a Hamiltonian vector field. Thus, the first cohomology group of $\{\cdot_{\mlambda}\cdot\}_2$ is not trivial.

In order to characterize the first cohomology group for $\{\cdot_{\mlambda}\cdot\}_{LP}$ we choose to proceed in a different way. The algebraic equations in the set of conditions \eqref{eq:1stOrderSymLP} allowed us to express the linear PDEs \eqref{eq:1stOrderSymLPdiff} in terms of 5 out of the 8 coefficients $A^{ab}_i$. We compute these 5 coefficients for a Hamiltonian vector field. They are
\begin{subequations}\label{eq:HamVF_LP}
 \begin{align}
A^{11}_1&=-\left(\frac{\dev h}{\dev p}+2p\frac{\dev^2 h}{\dev p^2}+q\frac{\dev^2 h}{\dev p\dev q}\right)\\
A^{11}_2&=-q\frac{\dev^2 h}{\dev p^2}\\
A^{12}_1&=-\left(2p\frac{\dev^2 h}{\dev p\dev q}+q\frac{\dev^2 h}{\dev q^2}\right)\\
A^{22}_1&=-p\frac{\dev^2 h}{\dev q^2}\\
A^{22}_2&=-\left(\frac{\dev h}{\dev q}+2q\frac{\dev^2 h}{\dev q^2}+\frac{\dev^2 h}{\dev p\dev q}\right) 
 \end{align}
\end{subequations}
We can regard the five equations \eqref{eq:HamVF_LP} as an overdetermined system of inhomogeneous linear PDEs for the unknown function $h$. The compatibility conditions for the functions $A^{ab}_i$ are the conditions that a symmetry of the $\lambda$-bracket must satisfy in order to be Hamiltonian. Indeed, they guarantee that a solution (i.e., a Hamiltonian) exists for a generic vector field expressed in terms of the same coefficients. The compatibility conditions may or may not have the same solution as the conditions for a vector field to be a symmetry. Of course, all the solutions of the compatibility conditions are symmetries: they are components of a Hamiltonian vector field. The converse is in general not true, namely the solutions of the symmetry conditions may not be solutions of the compatibility ones. That would mean that there exist non Hamiltonian symmetries.

The compatibility conditions among the parameters in the LHS of the system \eqref{eq:HamVF_LP} can be found using the tools of \texttt{Janet} package. We compute the Janet basis for them, getting exactly the set of equations \eqref{eq:1stOrderSymLPdiff}. That means that all the first order symmetries are Hamiltonian vector fields.

We have proved the following theorem:
\begin{thm}\label{thm:1stCoho}
The first cohomology groups of $\{\cdot_{\mlambda}\cdot\}_1$ and $\{\cdot_{\mlambda}\cdot\}_2$ are not trivial. In particular, their first order components are isomorphic to $\R^2$.
The first order component of the first cohomology group for the Poisson Vertex Algebra  $(\hat{\A},\{\cdot_{\mlambda}\cdot\}_{LP})$ is trivial.
\end{thm}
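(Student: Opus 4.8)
The plan is to compute, for each of the three normal forms, the first-order cohomology group $H^1_{[1]}$ as the quotient of the space of first-order symmetries by the space of first-order Hamiltonian vector fields (the coboundaries in $\Omega^1$). Since Lemmas \ref{thm:SymP1}, \ref{thm:SymP2} and \ref{thm:SymLP} already describe the symmetries explicitly, and since \eqref{eq:HamVF_P1}, \eqref{eq:HamVF_P2} and \eqref{eq:HamVF_LP} give the coboundaries, the theorem reduces to comparing these two families of vector fields in each case.

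For $\{\cdot_{\mlambda}\cdot\}_1$ the comparison is immediate: the general symmetry \eqref{eq:1stOrderSymP1EXPL} is parametrised by an arbitrary function $K(p,q)$ together with two constants $c_1,c_2$, whereas the Hamiltonian vector field \eqref{eq:HamVF_P1} reproduces exactly the $K$-part (upon setting $h=K$) and forces $c_1=c_2=0$. Thus, modulo coboundaries, the free function is killed and only the pair $(c_1,c_2)$ survives, giving $H^1_{[1]}\cong\R^2$. For $\{\cdot_{\mlambda}\cdot\}_2$ I would argue identically: comparing the symmetry conditions \eqref{eq:1stOrderSymP2} with the Hamiltonian coefficients \eqref{eq:HamVF_P2} shows that \eqref{eq:HamVF_P2} forces $A^{21}_2-A^{11}_1=0$ and $A^{12}_1-A^{22}_2=0$, i.e. $c_1=c_2=0$, whereas the symmetry space admits arbitrary $c_1,c_2$ — the explicit symmetry with $A^{21}_2=c_1$ and $A^{12}_1=-A^{21}_1=c_2$ (all other coefficients zero) being the witness — so again $H^1_{[1]}\cong\R^2$.

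For $\{\cdot_{\mlambda}\cdot\}_{LP}$ the strategy is dual and is where the real work lies. Rather than solving the symmetry conditions and the coboundary condition separately and comparing, I would read the five equations \eqref{eq:HamVF_LP} as an overdetermined inhomogeneous linear system for the single unknown $h$, with the five coefficients $A^{ab}_i$ (the ones in terms of which \eqref{eq:1stOrderSymLPdiff} is already phrased) playing the role of prescribed data. A symmetry is then a coboundary precisely when this system admits a solution $h$, i.e. precisely when its integrability conditions on the $A^{ab}_i$ are met. The plan is to compute these integrability conditions — again via a Janet basis, since by hand the cross-differentiations are unwieldy — and to check that they coincide, as a system, with the symmetry conditions \eqref{eq:1stOrderSymLPdiff}. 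Once this coincidence is established, every first-order symmetry is automatically Hamiltonian and $H^1_{[1]}$ is trivial.

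The main obstacle is precisely this last coincidence. For $\{\cdot_{\mlambda}\cdot\}_1$ and $\{\cdot_{\mlambda}\cdot\}_2$ the quotient is pure bookkeeping once the lemmas are in hand. For $\{\cdot_{\mlambda}\cdot\}_{LP}$, however, one must verify that the integrability conditions of \eqref{eq:HamVF_LP} reproduce \emph{exactly} — neither a weaker nor a stronger system than — the conditions \eqref{eq:1stOrderSymLPdiff}: a single surplus compatibility relation would leave a residual non-Hamiltonian symmetry and destroy the triviality claim, while a missing one would wrongly declare a coboundary. Reducing both systems to their normal forms (Janet bases) and matching them is therefore the crux of the argument.
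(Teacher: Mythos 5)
Your proposal is correct and takes essentially the same route as the paper: for $\{\cdot_{\mlambda}\cdot\}_1$ and $\{\cdot_{\mlambda}\cdot\}_2$ the paper likewise compares the symmetry lemmas with \eqref{eq:HamVF_P1} and \eqref{eq:HamVF_P2} and exhibits the two-parameter family $(c_1,c_2)$ of non-Hamiltonian symmetries, and for $\{\cdot_{\mlambda}\cdot\}_{LP}$ it also regards \eqref{eq:HamVF_LP} as an overdetermined inhomogeneous system for $h$ and verifies, by computing a Janet basis, that its compatibility conditions coincide exactly with \eqref{eq:1stOrderSymLPdiff}. The crux you single out -- that the two normal forms must match precisely, neither weaker nor stronger -- is exactly the computation the paper performs to conclude triviality.
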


\subsection{Deformations of Poisson brackets of hydrodynamic type}\label{ssec:defoLP}
\begin{deff}A \emph{$n$-th order deformation} of a PVA $(\hat{\A},\{\cdot_{\mlambda}\cdot\}_0)$ is a PVA defined by a deformed $\lambda$-bracket
\begin{equation}\label{eq:defoLambda_def}
 \{\cdot_{\mlambda}\cdot\}=\{\cdot_{\mlambda}\cdot\}_0+\sum_{k=1}^n\epsilon^k\{\cdot_{\mlambda}\cdot\}_{[k]}
\end{equation}
such that $\{\cdot_{\mlambda}\cdot\}$ is PVA-skewsymmetric and the PVA-Jacobi identity holds up to order $n$, namely
\begin{equation*}
 \{f_{\mlambda}\{g_{\mmu}h\}\}-\{g_{\mmu}\{f_{\mlambda}h\}\}-\{\{f_{\mlambda}g\}_{\mlambda+\mmu}h\}=O(\epsilon^{n+1}).
\end{equation*}
\end{deff}

A general Miura type transformation is a change of coordinates in the space of generators of the PVA. Using the grading introduced in \ref{ssec:PVACohom} we can define the \emph{Miura group} as the group of transformations of form
\begin{equation}\label{eq:Miura}
 \begin{split}
  &u^i\mapsto \tilde{u}^i=\sum_{k=0}^\infty\epsilon^kF^i_{[k]}(u;u_I)\quad|I|\leq k\\
&F^i_{[k]}\in\hat{\A},\quad\deg F^i_{[k]}=k\\
&\det\left(\frac{\dev F^i_{[0]}(p)}{\dev u^j}\right)\neq0.
 \end{split}
\end{equation}
\begin{deff}
A deformation is said to be \emph{trivial} if there exists an element $\phi_\epsilon$ of the group \eqref{eq:Miura} which pulls back $\{\cdot_{\mlambda}\cdot\}$ to $\{\cdot_{\mlambda}\cdot\}_0$,
\begin{equation*}
 \{\phi_\epsilon(a)_{\mlambda}\phi_\epsilon(b)\}_0=\phi_\epsilon\left(\{a_{\mlambda}b\}\right),\qquad\forall a,b\in\hat{\A}.
\end{equation*}
In terms of PVA cohomology, a deformed bracket is trivial if it is a coboundary in $\Omega^2(\hat{\A},\{\cdot_{\mlambda}\cdot\}_0)$.
\end{deff}
A first order deformation of \eqref{eq:defP1}, \eqref{eq:defP2} or \eqref{eq:LPPVA} is a second degree homogeneous bracket. In general, such a bracket is of the form
\begin{multline}\label{eq:LPPVADefo}
\left\{{p_i}_{\mlambda}{p_j}\right\}_{[1]}=A^{ab}_{ij}(p)\lambda_a\lambda_b+B^{a,bl}_{ij}(p)\dev_bp_l\lambda_a+\\+C^{al,bm}_{ij}(p)\dev_ap_l\dev_bp_m+D^{ab,l}(p)\dev_{ab}p_l
\end{multline}
in which each index can take values between $1$ and $d$ and we adopt the Einstein convention for the sum over repeated indices; moreover, the commas in the upper indices are inserted just for the convenience of the reader, namely to distinguish the different symmetry properties of the indices. Here, $A$, $B$, $C$ and $D$ are arbitrary functions of the $p$'s only. It should be apparent from the definition that $A^{ab}_{ij}$ and $D^{ab,l}_{ij}$ are symmetric in the exchange of $a$ and $b$ while $C^{al,bm}_{ij}$ must be symmetric in the simultaneous exchange of $(a,l)$ with $(b,m)$. The deformation depends on 108 parameters for $d=n=2$. The formalism of the Poisson Vertex Algebras makes finding the conditions on $A$, $B$, $C$ and $D$ for the bracket $\{\cdot_{\mlambda}\cdot\}_{[1]}$ to the first order deformation of \eqref{eq:defP1}, \eqref{eq:defP2}, and \eqref{eq:LPPVA} relatively simple, and anyhow straightforward. We will prove the following
\begin{thm}\label{thm:Trivia}
The first order second cohomology group for the Poisson Vertex Algebra associated to a multidimensional Poisson bracket of hydrodynamic type of form \eqref{eq:defP1}, \eqref{eq:defP2}, or \eqref{eq:LPPVA} for $d=n=2$ is trivial.
\end{thm}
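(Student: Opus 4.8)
The plan is to show that every first-order deformation $\{\cdot_{\mlambda}\cdot\}_{[1]}$ of the form \eqref{eq:LPPVADefo} which is a $2$-cocycle is in fact a $2$-coboundary, i.e.\ that it arises as $\ud_{\{\cdot_{\mlambda}\cdot\}_0}\xi$ for a first-order evolutionary vector field $\xi$ of the form \eqref{eq:VField1Ord}. Concretely this amounts to computing two linear spaces — the space of cocycles and the space of coboundaries in $\Omega^2_{[1]}$ — and checking that they coincide. I would treat the three normal forms \eqref{eq:defP1}, \eqref{eq:defP2} and \eqref{eq:LPPVA} separately, since the undeformed brackets differ and, in particular, \eqref{eq:LPPVA} is non-constant and will produce by far the heaviest system.

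First I would impose the cocycle conditions on the ansatz \eqref{eq:LPPVADefo}. PVA-skewsymmetry \eqref{eq:LambdaSkew} at order $\epsilon$ relates the coefficient functions $A$, $B$, $C$, $D$ and their derivatives, cutting down the $108$ free functions considerably; these relations are read off by matching the coefficients of the independent monomials $\lambda_a\lambda_b$, $\dev_bp_l\lambda_a$, $\dev_ap_l\dev_bp_m$ and $\dev_{ab}p_l$. The genuine cocycle condition is the linearized PVA-Jacobi identity: since $\{\cdot_{\mlambda}\cdot\}_0$ already satisfies \eqref{eq:LambdaJac}, the order-$\epsilon$ part of the Jacobi expression must vanish, which is exactly $\ud_{\{\cdot_{\mlambda}\cdot\}_0}\{\cdot_{\mlambda}\cdot\}_{[1]}=0$. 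Expanding the three double brackets with the master formula \eqref{eq:MasterFormula} and collecting the coefficient of each independent monomial in $\mlambda$, $\mmu$ and the jet variables yields a large but linear, overdetermined system of PDEs on $A$, $B$, $C$, $D$. I would reduce this system to a normal form using a Janet basis, exactly as in Lemmas \ref{thm:SymP2} and \ref{thm:SymLP}, obtaining an explicit parametrization of the cocycle space.

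Next I would compute the coboundary space. A first-order trivial deformation represents the coboundary $\ud_{\{\cdot_{\mlambda}\cdot\}_0}\xi$ (up to sign), i.e.\ the Lie derivative $\L_\xi\{\cdot_{\mlambda}\cdot\}_0$ of the undeformed bracket along a vector field with components $X_i=A^{ab}_i(p)\dev_ap_b$ as in \eqref{eq:VField1Ord}, which involves the eight free functions $A^{ab}_i(p)$ and their first derivatives. Carrying this out — most directly by letting $\xi$ act on $\{{p_i}_{\mlambda}p_j\}_0$ and reorganizing the result into the standard shape \eqref{eq:LPPVADefo} — expresses the coboundary coefficients $A$, $B$, $C$, $D$ in terms of the $A^{ab}_i$. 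The theorem then reduces, for each bracket, to the statement that the image of this coboundary map fills out the entire cocycle space. I would establish this by comparing Janet bases: showing that the system defining the cocycles and the system obtained by eliminating the $A^{ab}_i$ from the coboundary parametrization cut out the same solution variety, equivalently that for a generic cocycle one can solve back for a suitable $\xi$. For the constant cases \eqref{eq:defP1} and \eqref{eq:defP2} this is a manageable check; the real content of the theorem lies in the Lie--Poisson case \eqref{eq:LPPVA}.

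The main obstacle I expect is neither conceptual nor subtle but the size and rigidity of the computation. The $108$-parameter ansatz, together with the non-constant coefficients of \eqref{eq:LPPVA} — which force every term of the master formula to generate extra $\dev$-derivatives of $p$ — produces an overdetermined PDE system far too large to treat by hand. The crux is therefore to tame it with the Janet-basis machinery and to verify reliably the coincidence of the cocycle and coboundary varieties; once both are in normal form, the triviality of $H^2_{[1]}$ follows by inspection. A secondary point to keep track of is that skewsymmetry must be imposed consistently on both the deformation and the coboundaries, so that the two spaces are compared inside the same space of skewsymmetric degree-two brackets.
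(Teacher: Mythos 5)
Your proposal follows essentially the same route as the paper: impose skewsymmetry and the linearized PVA--Jacobi identity on the ansatz \eqref{eq:LPPVADefo} to parametrize the cocycles (via Janet bases), parametrize the coboundaries by the eight functions of a first-order vector field/Miura transformation, and conclude by checking that the compatibility (elimination) conditions of the coboundary system coincide with the cocycle conditions. Your infinitesimal Lie-derivative description of the coboundary map is just the order-$\epsilon$ expansion of the paper's Miura transformation, so the two computations agree.
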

\subsection{Proof of the Theorem \ref{thm:Trivia}}
In order to prove the theorem, we will proceed along two paths that are ultimately going to meet. First, we impose to the deformation  \eqref{eq:LPPVADefo} the constraints needed to get a first order deformed bracket, namely the skewsymmetry and the fulfillment of the PVA--Jacobi identity up to order $\epsilon$. Then, we shift to consider the trivial deformations of \eqref{eq:defP1}, \eqref{eq:defP2} and \eqref{eq:LPPVA}, namely the ones which are obtained by a Miura transformation of the bracket itself. We will show that the compatibility conditions of the latter ones and the reduced system of the former coincide. That means that all the compatible deformations of order 1 are trivial. While the condition of skewsymmetry is independent from the particular form of the undeformed bracket, both the trivial deformations and the PVA--Jacobi identities must be computed for each undeformed bracket. Hence, the full proof of the theorem is split in several lemmas.
\begin{lemma}\label{thm:defoskew}
A first order deformation of the $\lambda$-bracket of a PVA for $d=n$ is skewsymmetric if and only if the following conditions hold:
\begin{subequations}\label{eq:defoskew}
\begin{gather}\label{eq:1ODS1}
A^{ab}_{ij}=-A^{ab}_{ji}\\ \label{eq:1ODS2}
\frac{\dev A^{ab}_{ij}}{\dev p_l}=\frac{1}{2}\left(B^{a,bl}_{ij}-B^{a,bl}_{ji}\right)=\frac{1}{2}\left(B^{b,al}_{ij}-B^{b,al}_{ji}\right)\\ \label{eq:1ODS3}
B^{a,bl}_{ij}+B^{b,al}_{ji}=B^{b,al}_{ij}+B^{a,bl}_{ji}=2D^{ab,l}_{ij}+2D^{ab,l}_{ji}\\ \label{eq:1ODS4}
\frac{\dev B^{a,bm}_{ij}}{\dev p_l}+\frac{\dev B^{b,al}_{ji}}{\dev p_m}=\frac{\dev B^{b,al}_{ij}}{\dev p_m}+\frac{\dev B^{a,bm}_{ji}}{\dev p_l}=2C^{al,bm}_{ij}+2C^{al,bm}_{ji}
\end{gather}
\end{subequations}
\end{lemma}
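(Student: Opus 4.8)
The plan is to impose the PVA-skewsymmetry relation \eqref{eq:LambdaSkew} on the generic first-order deformation \eqref{eq:LPPVADefo} and to read off the constraints on $A$, $B$, $C$ and $D$ by comparing the coefficients of independent jet monomials on the two sides. Since the statement is an equivalence and each comparison is a genuine matching of coefficients of linearly independent expressions, necessity and sufficiency are obtained simultaneously.

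First I would exploit the fact that $A^{ab}_{ij}$, $B^{a,bl}_{ij}$, $C^{al,bm}_{ij}$ and $D^{ab,l}_{ij}$ are functions of the $p$'s alone. A total derivative $\dev_\alpha$ acting on any of them produces only terms $\frac{\dev(\cdot)}{\dev p_l}\dev_\alpha p_l$, and a second derivative produces $\frac{\dev(\cdot)}{\dev p_l}\dev_{\alpha\beta}p_l+\frac{\dev^2(\cdot)}{\dev p_l\dev p_m}\dev_\alpha p_l\,\dev_\beta p_m$. Hence the whole computation stays within the finite set of monomials $\lambda_a\lambda_b$, $\lambda_a\dev_b p_l$, $\dev_a p_l\,\dev_b p_m$ and $\dev_{ab}p_l$, which are linearly independent as functions on the jet space; this is exactly what justifies a coefficient-by-coefficient comparison. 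I would then compute the right-hand side ${}_{\to}\{{p_i}_{-\mlambda-\mdev}p_j\}$ term by term, replacing each $\mlambda^N$ by the operator $(-\mlambda-\mdev)^N$ acting to the right on the corresponding coefficient. The crucial — and only nonroutine — feature is that the quadratic part $A^{ab}_{ij}\lambda_a\lambda_b$ does not merely reproduce itself: expanding $(\lambda_a+\dev_a)(\lambda_b+\dev_b)A^{ab}_{ij}$ feeds a term linear in $\lambda$ carrying one factor $\dev p$ (which enters the $B$-level equation), a term quadratic in first derivatives (entering the $C$-level equation) and a term in $\dev_{ab}p_l$ (entering the $D$-level equation); likewise $(-\lambda_a-\dev_a)(B^{a,bl}_{ij}\dev_b p_l)$ contributes to the $C$- and $D$-level monomials. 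Finally I would set $\{{p_j}_{\mlambda}p_i\}$, written in the form \eqref{eq:LPPVADefo} with $i$ and $j$ interchanged, equal to minus this expression.

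Matching the four families of monomials then yields the four groups of conditions. The coefficient of $\lambda_a\lambda_b$ gives immediately the antisymmetry \eqref{eq:1ODS1}; the coefficient of $\lambda_a\dev_b p_l$, after using the symmetry of $A$ in its upper indices, gives \eqref{eq:1ODS2}. For the remaining two families the $A$-derivatives that appear must be eliminated using \eqref{eq:1ODS2}: after this substitution the symmetrized coefficient of $\dev_{ab}p_l$ collapses to \eqref{eq:1ODS3}, and the symmetrized coefficient of $\dev_a p_l\,\dev_b p_m$ collapses to \eqref{eq:1ODS4}. The main obstacle is purely organizational: one must keep careful track of the symmetrizations forced by $\dev_{ab}p_l=\dev_{ba}p_l$ and by the symmetry properties of $A$, $C$ and $D$ in their paired indices, and of the several places where the expansion of the higher-$\lambda$ terms leaks into the lower-order monomials. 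Once this bookkeeping is arranged, \eqref{eq:1ODS1}--\eqref{eq:1ODS4} follow by elementary linear algebra.
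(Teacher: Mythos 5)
Your proposal is correct and takes essentially the same route as the paper: the paper's proof likewise computes $\{{p_i}_{\mlambda}{p_j}\}_{[1]}+{}_\to\{{p_j}_{-\mdev-\mlambda}{p_i}\}_{[1]}$ and sets to zero the coefficients of $\lambda_a\lambda_b$, $\lambda_a\dev_bp_l$, $\dev_ap_l\dev_bp_m$ and $\dev_{ab}p_l$. Your extra bookkeeping---the observation that expanding $(-\mlambda-\mdev)^N$ makes the $A$- and $B$-terms leak into the lower-order monomials, and the elimination of the $A$-derivatives via \eqref{eq:1ODS2} to reach \eqref{eq:1ODS3}--\eqref{eq:1ODS4}---simply makes explicit what the paper leaves implicit.
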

\begin{proof}
We compute $\{{p_i}_{\mlambda}{p_j}\}_{[1]}+{}_\to\{{p_j}_{-\mdev-\mlambda}{p_i}\}_{[1]}$ and set equal to zero respectively the coefficients of $\lambda_a\lambda_b$, $\lambda_a\dev_bp_l$, $\dev_ap_l\dev_bp_m$ and $\dev_{ab}p_l$.
\end{proof}
In particular, for the case $d=n=2$, the condition of skewsymmetry is equivalent to impose the following form for the parameters of \eqref{eq:LPPVADefo}:
\begin{subequations}\label{eq:S-APar}
\begin{align}\label{eq:AS}
 A^{ab}_{ij(S)}&=\frac{1}{2}\left(A^{ab}_{ij}+A^{ab}_{ji}\right)\stackrel{\text{\eqref{eq:1ODS1}}}{=}0\\\label{eq:AA}
A^{ab}_{12(A)}&=\frac{1}{2}\left(A^{ab}_{12}-A^{ab}_{21}\right)=\tilde{A}^{ab}\\ \label{eq:BS}
B^{a,bl}_{ij(S)}&=\frac{1}{2}\left(B^{a,bl}_{ij}+B^{ab,l}_{ji}\right)=\tilde{B}^{a,bl}_{ij}\\ \label{eq:BA}
B^{a,bl}_{12(A)}&=\frac{1}{2}\left(B^{a,bl}_{12}-B^{ab,l}_{21}\right)\stackrel{\text{\eqref{eq:1ODS2}}}{=}\frac{\dev \tilde{A}^{ab}}{\dev p_l}\\ \label{eq:CS}
C^{al,bm}_{ij(S)}&=\frac{1}{2}\left(C^{al,bm}_{ij}+C^{al,bm}_{ji}\right)\\ \notag
&\stackrel{\text{\eqref{eq:1ODS4}}}{=}\frac{1}{4}\left(\frac{\dev \left(\tilde{B}^{a,bm}_{ij}+B^{a,bm}_{ij(A)}\right)}{\dev p_l}+\frac{\dev\left( \tilde{B}^{b,al}_{ij}-B^{b,al}_{ij(A)}\right)}{\dev p_m}\right)
\end{align}
\begin{align}\label{eq:CA}
C^{al,bm}_{12(A)}&=\frac{1}{2}\left(C^{al,bm}_{12}-C^{al,bm}_{21}\right)=\tilde{C}^{al,bm}\\ \label{eq:DS}
D^{ab,l}_{ij(S)}&=\frac{1}{2}\left(D^{ab,l}_{ij}+D^{ab,l}_{ji}\right)\\ \notag
&\stackrel{\text{\eqref{eq:1ODS3}}}{=}\frac{1}{4}\left(\tilde{B}^{a,bl}_{ij}+B^{a,bl}_{ij(A)}+\tilde{B}^{b,al}_{ij}-B^{b,al}_{ij(A)}\right)\\ \label{eq:DA}
D^{ab,l}_{12(A)}&=\frac{1}{2}\left(D^{ab,l}_{12}-D^{ab,l}_{21}\right)=\tilde{D}^{ab,l}.
\end{align}
\end{subequations}
Imposing the skewsymmetry condition reduces the number of free parameters (now they are the functions denoted with the tilde) to 43.
\begin{lemma}\label{thm:1stOrderDefoP1}
A homogeneous $\lambda$-bracket of degree 2 of form \eqref{eq:LPPVADefo} is a first order deformation of the bracket \eqref{eq:defP1} if and only if the following conditions hold:
\begin{subequations}\label{eq:1stOrderDefoP1}
\begin{gather}
 \sum_{\sigma(a,b,c)}\left[D^{ab,i}_{jk}\delta^c_i-D^{ab,k}_{ij}\delta^c_k+\frac{1}{2}\left(B^{a,bk}_{ij}+B^{a,bk}_{ji}\right)\delta^c_k\right]=0\\
 \notag B^{c,bi}_{jk}\delta^a_i+B^{c,ai}_{jk}\delta^b_i-\left(B^{a,bj}_{ik}-B^{a,bj}_{ki}\right)\delta^c_j+\left(B^{a,ck}_{ij}-2D^{ac,k}_{ij}\right)\delta^b_k+\\
 +\left(B^{b,ck}_{ij}-2D^{bc,k}_{ij}\right)\delta^a_k+2D^{ab,k}_{ji}\delta^c_k=0\\
 \sum_{\sigma(a,b,c)}\left(2C^{ak,bl}_{ij}-\frac{\dev D^{ab,l}_{ij}}{\dev p_k}-\dev{\dev D^{ab,k}_{ij}}{\dev p_l}\right)\delta^c_k=0\\ \notag
 2C^{ai,cl}_{jk}\delta^b_i+2C^{bi,cl}_{jk}\delta^a_i+\left(\frac{\dev B^{a,ck}_{ij}}{\dev p_l}-\frac{\dev B^{a,cl}_{ij}}{\dev p_k}-2\frac{\dev D^{ac,k}_{ij}}{\dev p_l}+2C^{ak,cl}_{ij}\right)\delta^b_k+\\
 +\left(\frac{\dev B^{b,ck}_{ij}}{\dev p_l}-\frac{\dev B^{b,cl}_{ij}}{\dev p_k}-2\frac{\dev D^{bc,k}_{ij}}{\dev p_l}+2C^{bk,cl}_{ij}\right)\delta^a_k+2\frac{\dev D^{ab,k}_{ji}}{\dev p_l}\delta^c_k=0\\ \notag
 \frac{\dev B^{b,cl}_{jk}}{\dev p_i}\delta^a_i-\frac{\dev B^{a,cl}_{ik}}{\dev p_j}\delta^b_j+2\left(C^{bk,cl}_{ij}-\frac{\dev D^{bc,k}_{ij}}{\dev p_l}\right)\delta^a_k-2\left(C^{ak,cl}_{ji}-\frac{\dev D^{ac,k}_{ji}}{\dev p_l}\right)\delta^b_k+\\
 +\frac{\dev}{\dev p_l}\left(B^{ab,k}_{ij}-2D^{ab,k}_{ij}\right)\delta^c_k=0\\ \notag
 2\frac{\dev D^{bc,l}_{jk}}{\dev p_i}\delta^a_i+\left(\frac{\dev B^{a,bk}_{ij}}{\dev p_l}-\frac{\dev B^{a,bl}_{ij}}{\dev p_k}+2C^{ak,bl}_{ij}-2\frac{\dev D^{ab,k}_{ij}}{\dev p_l}\right)\delta^c_k+\\ \notag
 +\left(\frac{\dev B^{a,ck}_{ij}}{\dev p_l}-\frac{\dev B^{a,cl}_{ij}}{\dev p_k}+2C^{ak,cl}_{ij}-2\frac{\dev D^{ac,k}_{ij}}{\dev p_l}\right)\delta^b_k+\\+2\left(C^{bk,cl}_{ij}+C^{ck,bl}_{ij}-\frac{\dev D^{bc,l}_{ij}}{\dev p_k}-\frac{\dev D^{bc,k}_{ij}}{\dev p_l}\right)\delta^a_k=0\\
 \sum_{\sigma(al,bm,cn)}\left(2\frac{\dev^2 C^{ak,bm}_{ij}}{\dev p_l\dev p_n}-\frac{\dev^2 C^{al,bm}_{ij}}{\dev p_k\dev p_n}-\frac{\dev^3D^{ab,k}_{ij}}{\dev p_l\dev p_m\dev p_n}\right)\delta^c_k=0 
\end{gather}
\begin{gather}
 \notag
 2\frac{\dev C^{bl,cm}_{ij}}{\dev p_i}\delta^a_i+\left(2\frac{\dev C^{bl,ck}_{ij}}{\dev p_m}+\frac{\dev C^{cm,bk}_{ij}}{\dev p_l}-2\frac{\dev^2D^{bc,k}_{ij}}{\dev p_l\dev p_m}-2\frac{\dev C^{bl,cm}_{ij}}{\dev p_k}\right)\delta^a_k+\\ \notag
 +\left(\frac{\dev^2B^{a,bk}_{ij}}{\dev p_l\dev p_m}-\frac{\dev^2B^{a,bl}_{ij}}{\dev p_k\dev p_m}+2\frac{\dev C^{ak,bl}_{ij}}{\dev p_m}-2\frac{\dev^2D^{ab,k}_{ij}}{\dev p_l\dev p_m}\right)\delta^c_k+\\
 +\left(\frac{\dev^2B^{a,ck}_{ij}}{\dev p_l\dev p_m}-\frac{\dev^2B^{a,cl}_{ij}}{\dev p_k\dev p_m}+2\frac{\dev C^{ak,cl}_{ij}}{\dev p_m}-2\frac{\dev^2D^{ac,k}_{ij}}{\dev p_l\dev p_m}\right)\delta^b_k=0\\ \notag
 \left(\frac{\dev C^{ak,bl}_{ij}}{\dev p_m}-\frac{\dev^2 D^{ab,l}_{ij}}{\dev p_m\dev p_k}+\frac{\dev C^{bk,al}_{ij}}{\dev p_m}-\frac{\dev^2 D^{ab,k}_{ij}}{\dev p_m\dev p_l}\right)\delta^c_k+\\ \notag
+\left(\frac{\dev C^{ck,al}_{ij}}{\dev p_m}+\frac{\dev C^{ak,cm}_{ij}}{\dev p_l}-\frac{\dev C^{al,cm}_{ij}}{\dev p_k}-\frac{\dev^2 D^{ac,k}_{ij}}{\dev p_m\dev p_l}\right)\delta^b_k+\\
+\left(\frac{\dev C^{ck,bl}_{ij}}{\dev p_m}+\frac{\dev C^{bk,cm}_{ij}}{\dev p_l}-\frac{\dev C^{bl,cm}_{ij}}{\dev p_k}-\frac{\dev^2 D^{bc,k}_{ij}}{\dev p_m\dev p_l}\right)\delta^a_k=0
\end{gather}
\end{subequations}
Repeated indices are summated according to Einstein's rule; $\sum_{\sigma(a,b,c)}$ means the complete symmetrization with respect to the listed indices (or couples of indices).
\end{lemma}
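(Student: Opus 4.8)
The plan is to read off the conditions \eqref{eq:1stOrderDefoP1} as the order-$\epsilon$ part of the PVA--Jacobi identity \eqref{eq:LambdaJac}. Being a first order deformation amounts to two requirements: skewsymmetry of the deformed bracket, which has already been reduced to \eqref{eq:defoskew} in Lemma \ref{thm:defoskew}, and the validity of \eqref{eq:LambdaJac} modulo $\epsilon^2$. I would write $\{\cdot_{\mlambda}\cdot\}=\{\cdot_{\mlambda}\cdot\}_0+\epsilon\{\cdot_{\mlambda}\cdot\}_{[1]}$ with $\{\cdot_{\mlambda}\cdot\}_0$ given by \eqref{eq:defP1} and $\{\cdot_{\mlambda}\cdot\}_{[1]}$ by \eqref{eq:LPPVADefo}, evaluate \eqref{eq:LambdaJac} on the three generators $p_i,p_j,p_k$, and collect the coefficient of $\epsilon$. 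The resulting expression is the sum of the six cross terms mixing one $\{\cdot_{\mlambda}\cdot\}_0$ with one $\{\cdot_{\mlambda}\cdot\}_{[1]}$. The decisive simplification is that on generators $\{{p_i}_{\mlambda}p_j\}_0=\delta_{ij}\lambda_i$ is a constant in $\R[\mlambda]$, so that every term in which $\{\cdot_{\mlambda}\cdot\}_0$ acts on a pair of generators yields a constant, and the $\{\cdot_{\mlambda}\cdot\}_{[1]}$-bracket of a constant vanishes by the Leibniz rule. Three of the six terms therefore drop out and the order-$\epsilon$ condition collapses to
\begin{equation*}
 \{{p_i}_{\mlambda}\{{p_j}_{\mmu}p_k\}_{[1]}\}_0-\{{p_j}_{\mmu}\{{p_i}_{\mlambda}p_k\}_{[1]}\}_0=\{\{{p_i}_{\mlambda}p_j\}_{[1]}{}_{\mlambda+\mmu}p_k\}_0.
\end{equation*}

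I would then evaluate the three surviving compositions explicitly by means of the master formula (Theorem \ref{thm:master}). Because $\{{p_i}_{\mlambda}p_l\}_0=\delta_{il}\lambda_i$ is constant, the right Leibniz rule and right sesquilinearity show that $\{{p_i}_{\mlambda}\,\cdot\,\}_0$ acts on any differential polynomial $X$ simply as the derivation $\lambda_i\sum_N\mlambda^N\,\dev X/\dev p_{i,N}$, the operators $(\mlambda+\mdev)^N$ reducing to $\mlambda^N$ since they hit a constant; symmetrically, the right-hand composition $\{\,\cdot\,{}_{\mlambda+\mmu}p_k\}_0$ is produced by the left Leibniz rule and left sesquilinearity, carrying the factor $(\lambda_k+\mu_k+\dev_k)$ and the operators $(-\mlambda-\mmu-\mdev)^M$ acting on the coefficients. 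Substituting the ansatz \eqref{eq:LPPVADefo} and performing these differentiations turns each side into a differential polynomial that is homogeneous of graded degree $3$, i.e. a polynomial in $\mlambda,\mmu$ and in the jet variables $\dev_a p_l$, $\dev_{ab}p_l$ and $\dev_{abc}p_l$.

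The equations \eqref{eq:1stOrderDefoP1} are then obtained, exactly as in the proof of Theorem \ref{thm:Mokhov}, by moving everything to one side and setting to zero the coefficient of each independent monomial type occurring at degree $3$: the part purely polynomial in $\mlambda,\mmu$, the parts of $\mlambda,\mmu$-degree two and one multiplying a single first jet, the cubic monomial in the first jets, the $\mlambda,\mmu$-linear terms multiplying a second jet, the product of a first and a second jet, and finally the lone third jet $\dev_{abc}p_l$. In each coefficient the Kronecker deltas $\delta^c_k$ are the direct image of the factor $\delta_{il}$ in $\{\cdot_{\mlambda}\cdot\}_0$ and the index shifts come from the accompanying $\lambda_i$; the symmetrisations $\sum_{\sigma(\,\cdot\,)}$ reflect the freedom to permute the interchangeable $\mlambda,\mmu$-labels inside a given monomial, which after combination with the $\lambda\leftrightarrow\mu$ symmetric counterparts produces the fully symmetrised form displayed.

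The difficulty here is entirely computational, not conceptual. The main obstacle will be the careful expansion of $(\mlambda+\mdev)^N$ and $(-\mlambda-\mmu-\mdev)^M$ together with the accounting of which total derivatives act on which of the coefficient functions $A,B,C,D$, and the fact that a single monomial---for instance the one carrying $\lambda_a\mu_b\,\dev_c p_l$---gathers contributions from all three surviving terms and must be fully assembled and symmetrised before its coefficient can be extracted. Once the reduced identity above is set up and the derivation rule for $\{{p_i}_{\mlambda}\,\cdot\,\}_0$ is fixed, the remainder is an algorithmic if lengthy expansion of the same nature as the one already performed for Theorem \ref{thm:Mokhov}.
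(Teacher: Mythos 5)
Your proposal is correct and follows essentially the same route as the paper: expand the PVA--Jacobi identity on the three generators to first order in $\epsilon$, and obtain the equations \eqref{eq:1stOrderDefoP1} by collecting the coefficients of the independent monomials in $\mlambda$, $\mmu$ and the jet variables up to total degree three. Your additional observation that three of the six cross terms in \eqref{eq:1ODPVAJac} vanish identically --- because $\{{p_i}_{\mlambda}p_j\}_1=\delta_{ij}\lambda_i$ is constant and the $\lambda$-bracket of a constant vanishes by the Leibniz rules --- is a valid sharpening of the paper's displayed formula, in which those terms are written out but are zero in any case.
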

\begin{proof}
When computing the PVA-Jacobi identity for $\{\cdot_{\mlambda}\cdot\}$, we end up with a degree 0 term in $\epsilon$ which is the PVA-Jacobi identity for the undeformed bracket $\{\cdot_{\mlambda}\cdot\}_1$, plus a degree 1 term which reads
\begin{multline}\label{eq:1ODPVAJac}
 \epsilon\bigg(\left\{{p_i}_{\mlambda}\left\{{p_j}_{\mmu}{p_k}\right\}_1\right\}_{[1]}+\left\{{p_i}_{\mlambda}\left\{{p_j}_{\mmu}{p_k}\right\}_{[1]}\right\}_1+\\
-\left\{{p_j}_{\mmu}\left\{{p_i}_{\mlambda}{p_k}\right\}_1\right\}_{[1]}-\left\{{p_j}_{\mmu}\left\{{p_i}_{\mlambda}{p_k}\right\}_{[1]}\right\}_1+\\
-\left\{{\left\{{p_i}_{\mlambda}{p_j}\right\}_{[1]}}_{\mlambda+\mmu}{p_k}\right\}_1-\left\{{\left\{{p_i}_{\mlambda}{p_j}\right\}_1}_{\mlambda+\mmu}{p_k}\right\}_{[1]}\bigg)
\end{multline}
and terms of higher order that are discharged. The sets of equations \eqref{eq:1stOrderDefoP1} are then obtained collecting the homogeneous terms in $\lambda$, $\mu$ and derivatives of $p$, up to the third degree.
\end{proof}
We apply to \eqref{eq:1stOrderDefoP1} the skewsymmetry conditions \eqref{eq:S-APar}; all the algebraic relations, that can be found by direct inspection, among the 43 parameters are given in Appendix \ref{app:DefoPar1}. There are still 9 functions (according to our choice, $\tilde{A}^{11}$, $\tilde{A}^{12}$, $\tilde{A}^{22}$, $\tilde{B}^{1,12}_{11}$, $\tilde{B}^{1,22}_{11}$, $\tilde{B}^{2,11}_{11}$, $\tilde{B}^{2,21}_{22}$, $\tilde{B}^{2,11}_{22}$, and $\tilde{B}^{1,22}_{22}$) which must satisfy the following set of linear PDEs in order to be the components of the first order deformed bracket.
\begin{subequations}\label{eq:PVAJ_P1}
\begin{align}
 \frac{\dev \tilde{B}^{2,11}_{11}}{\dev q}&=\frac{\dev\tilde{B}^{1,22}_{11}}{\dev p}+2\frac{\dev^2\tilde{A}^{22}}{\dev p^2}\\
 \frac{\dev \tilde{B}^{1,22}_{22}}{\dev p}&=\frac{\dev\tilde{B}^{2,11}_{22}}{\dev q}-2\frac{\dev^2\tilde{A}^{11}}{\dev q^2}.
\end{align}
\end{subequations}
The same procedure can be repeated for the deformations of \eqref{eq:defP2}. In this case we do not start looking for the general set of conditions for any $d$, but compute explictly the PVA--Jacobi identity at the first order \eqref{eq:1ODPVAJac} for $\{\cdot_{\mlambda}\cdot\}_2$ and $\{\cdot_{\mlambda}\cdot\}_{[1]}$, having imposed the form \eqref{eq:S-APar} to the parameters of the deformation. All the 43 parameters can be expressed as linear combinations and derivatives of just 9 of them, namely $\tilde{A}^{11}$, $\tilde{A}^{12}$, $\tilde{A}^{22}$, $\tilde{B}^{1,11}_{11}$, $\tilde{B}^{1,21}_{11}$, $\tilde{B}^{1,12}_{22}$, $\tilde{B}^{2,11}_{22}$, $\tilde{B}^{2,12}_{22}$, and $\tilde{B}^{2,21}_{22}$. The formulas for the remaining can be found solving linear algebraic equations and are explicitly given in Appendix \ref{app:DefoPar2}. The nine parameters we are left with must satisfy the following pair of linear PDEs:
\begin{subequations}\label{eq:PVAJ_P2}
\begin{align}
 \frac{\dev\tilde{B}^{1,12}_{22}}{\dev q}+\frac{\dev\tilde{B}^{2,11}_{22}}{\dev q}&=\frac{\dev \tilde{B}^{2,12}_{22}}{\dev p}\\
 \frac{\dev \tilde{B}^{1,11}_{11}}{\dev q}+2\frac{\dev^2\tilde{A}^{11}}{\dev q^2}+\frac{\dev^2\tilde{A}^{22}}{\dev p^2}&=2\frac{\dev\tilde{B}^{2,12}_{22}}{\dev q}+\frac{\dev\tilde{B}^{2,21}_{22}}{\dev q}+\frac{\dev\tilde{B}^{1,21}_{11}}{\dev p}+4\frac{\dev^2\tilde{A}^{12}}{\dev p\dev q}.
\end{align}
\end{subequations}
Finally, we perform the same computation of Lemma \ref{thm:1stOrderDefoP1} for the third class of $\lambda$-brackets.
\begin{lemma}\label{thm:1stOrderDefoLP}
 A homogeneous $\lambda$-bracket of degree 2 of form \eqref{eq:LPPVADefo} is a first order deformation of the bracket \eqref{eq:LPPVA} if and only if the following conditions hold:
\begin{subequations}\label{eq:1stOrderDefo}
\begin{gather} \label{eq:PVAlll}
 D^{ab,c}_{ji}p_k+D^{ab,c}_{jk}p_i+\left(D^{ab,l}_{ji}\delta^c_k+D^{ab,l}_{jk}\delta^c_i\right)p_l+\pc (a,b,c)=0\\ \notag
2\left(A^{bc}_{ik}\delta^a_j+A^{ac}_{ik}\delta^b_j\right)+2\left(A^{ab}_{ji}\delta^c_k-A^{ab}_{ik}\delta^c_j+A^{ab}_{kj}\delta^c_i\right)+\\ \notag
-\left(B^{a,bc}_{ki}-B^{a,bc}_{ik}\right)p_j-\left(B^{c,ab}_{jk}-B^{c,ba}_{jk}\right)p_i+\\ \label{eq:PVAllm}
-\left[\left(B^{a,bl}_{ki}-B^{a,bl}_{ik}\right)\delta^c_j+B^{c,bl}_{jk}\delta^a_i+B^{c,al}_{jk}\delta^b_i\right]p_l+\\ \notag
-\left[\left(2D^{bc,l}_{ji}-B^{c,bl}_{ji}\right)\delta^a_k+\left(2D^{ca,l}_{ji}-B^{c,al}_{ji}\right)\delta^b_k+2D^{ab,l}_{ji}\delta^c_k\right]p_l+\\ \notag
-\left(2D^{cb,a}_{ji}-B^{c,ba}_{ji}+2D^{ca,b}_{ji}-B^{c,ab}_{ji}+2D^{ab,c}_{ji}\right)p_k=0\\ \label{eq:PVAd3}
\sum_{\sigma(a,b,c)}\left[\left(\frac{\dev D^{ab,l}_{ij}}{\dev p_m}+\frac{\dev D^{ab,m}_{ij}}{\dev p_l}-2C^{am,bl}_{ij}\right)p_m\delta^c_k\right.+\\ \notag+\left.\left(\frac{\dev D^{ab,l}_{ij}}{\dev p_c}+\frac{\dev D^{ab,c}_{ij}}{\dev p_l}-2C^{ac,bl}_{ij}\right)\right]=0\\ \notag
\left(C^{ba,cl}_{ji}+C^{ab,cl}_{ji}\right)p_k-\left(C^{ba,cl}_{jk}+C^{ab,cl}_{jk}\right)p_i-\frac{\dev}{\dev p_l}\left(D^{ab,c}_{ji}+D^{bc,a}_{ji}+D^{ca,b}_{ji}\right)p_k+\\ \notag
-\left(D^{bc,a}_{jk}+D^{ac,b}_{jk}\right)\delta^l_i-D^{ab,c}_{ji}\delta^l_k-\left(D^{bc,l}_{jk}\delta^a_i+D^{ca,l}_{jk}\delta^b_i+D^{ab,l}_{jk}\delta^c_i\right)+\\ \label{eq:PVAlld}
+\left[\left(C^{bm,cl}_{ji}\delta^a_k+C^{am,cl}_{ji}\delta^b_k\right)-\left(C^{bm,cl}_{jk}\delta^a_i+C^{am,cl}_{jk}\delta^b_i\right)+\right.\\ \notag
\left.-\frac{\dev}{\dev p_l}\left(D^{ab,m}_{ji}\delta^c_k+D^{bc,m}_{ji}\delta^a_k+D^{ca,m}_{ji}\delta^b_k\right)\right]p_m=0
\end{gather}
\begin{gather}\label{eq:PVAlmd}
B^{b,cl}_{ik}\delta^a_j-B^{a,cl}_{jk}\delta^b_i+B^{b,cl}\delta^a_k-B^{a,cl}_{ij}\delta^b_k+B^{b,al}_{jk}\delta^c_i+\\ \notag
+B^{a,cb}_{ik}\delta^l_j-B^{b,ca}_{jk}\delta^l_i+\left(2D^{ab,c}_{ij}-B^{a,bc}_{ij}\right)+
+\frac{\dev B^{a,cl}_{ik}}{\dev p_b}p_j-\frac{\dev B^{b,cl}_{jk}}{\dev p_a}p_i+\\ \notag
+\left[2C^{ab,cl}_{ji}-2C^{ba,cl}_{ij}+2\frac{\dev D^{cb,a}_{ij}}{\dev p_l}-2\frac{\dev D^{ca,b}_{ji}}{\dev p_l}+\frac{\dev}{\dev p_l}\left(2D^{ab,c}_{ij}-B^{a,bc}_{ij}\right)\right]p_k+\\ \notag
+\left[\frac{\dev B^{a,cl}_{ik}}{\dev p_m}\delta^b_j-\frac{\dev B^{b,cl}_{jk}}{\dev p_m}\delta^a_i+2\left(\frac{\dev D^{cb,m}_{ij}}{\dev p_l}-C^{bm,cl}_{ij}\right)\delta^a_k+\right.\\ \notag\left. -2\left(\frac{\dev D^{ab,m}_{ji}}{\dev p_l}-C^{am,cl}_{ji}\right)\delta^b_k+\frac{\dev}{\dev p_l}\left(2D^{ab,m}_{ij}-B^{a,bm}_{ij}\right)\delta^c_k\right]=0\\ \notag
\left(C^{ac,bl}_{ji}+C^{ab,cl}_{ji}-\frac{\dev D^{ab,c}_{ji}}{\dev p_l}-\frac{\dev D^{ac,b}_{ji}}{\dev p_l}+\frac{\dev D^{bc,a}_{ij}}{\dev p_l}+\frac{\dev D^{bc,l}_{ij}}{\dev p_a}-C^{ba,cl}_{ij}-C^{ca,bl}_{ij}\right)p_k+\\ \label{eq:PVAld2}
-\frac{\dev D^{bc,l}_{jk}}{\dev p_a}p_i-D^{bc,a}_{jk}\delta^l_i-D^{ab,l}_{jk}\delta^c_i-D^{ac,l}_{jk}\delta^b_i+D^{bc,l}_{ji}\delta^a_k+\\ \notag
+\left[-\frac{\dev D^{bc,l}_{jk}}{p_m}\delta^a_i+\left(\frac{\dev D^{bc,l}_{ij}}{\dev p_m}-\frac{\dev D^{bc,m}_{ij}}{\dev p_l}-C^{bm,cl}_{ij}-C^{cm,bl}_{ij}\right)\delta^a_k+\right.\\ \notag
\left.+\left(C^{am,bl}_{ji}-\frac{\dev D^{ab,m}_{ji}}{\dev p_l}\right)\delta^c_k+\left(C^{am,cl}_{ji}-\frac{\dev D^{ac,m}_{ji}}{\dev p_l}\right)\delta^b_k\right]p_m=0\\ \notag
-2\frac{\dev}{\dev p_a}\left(C^{bl,cm}_{jk}p_i\right)+2\frac{\dev C^{bl,cm}_{ij}}{\dev p_a}p_k+2C^{bl,cm}_{ji}\delta^a_k\\ \notag
+\frac{\dev}{\dev p_m}\left(2C^{ac,bl}_{ji}-2C^{ca,bl}_{ij}+2\frac{\dev D^{bc,a}_{ij}}{\dev p_l}-2\frac{\dev D^{ab,c}_{ji}}{\dev p_l}\right)p_k+\\ \label{eq:PVAldd}
+\frac{\dev}{\dev p_l}\left(2C^{ab,cm}_{ji}-2C^{ba,cm}_{ij}+2\frac{\dev D^{bc,a}_{ij}}{\dev p_m}-2\frac{\dev D^{ac,b}_{ji}}{\dev p_m}\right)p_k+\\ \notag
-2C^{bl,am}_{jk}\delta^c_i-2C^{cm,al}_{jk}\delta^b_i-2C^{bl,ca}_{jk}\delta^m_i-2C^{cm,ba}_{jk}\delta^l_i+\\ \notag
+2\left(C^{ac,bl}_{ji}-\frac{\dev D^{ab,c}_{ji}}{\dev p_l}\right)\delta^m_k+2\left(C^{ab,cm}_{ji}-\frac{\dev D^{ac,b}_{ji}}{\dev p_m}\right)\delta^l_k=0\\ \label{eq:PVAddd}
\sum_{\sigma(al,bm,cn)}\left[\frac{\dev}{\dev p_n}\left(\frac{\dev C^{al,bm}_{ij}}{\dev p_c}p_k-2\frac{\dev C^{ac,bm}_{ij}}{\dev p_l}p_k+\frac{\dev^2 D^{ab,c}_{ij}}{\dev p_m\dev p_l}p_k\right)+\right.\\ \notag
\left.+p_s\frac{\dev}{\dev p_n}\left(\frac{\dev C^{al,bm}_{ij}}{\dev p_s}-2\frac{\dev C^{as,bm}_{ij}}{\dev p_l}+\frac{\dev^2 D^{ab,s}_{ij}}{\dev p_m\dev p_l}\right)\delta^c_k\right]=0
\end{gather}
\begin{multline}\label{eq:PVAd2d}
 \frac{\dev}{\dev p_m}\left[\left(\frac{\dev D^{ab,l}_{ij}}{\dev p_c}+\frac{\dev D^{ab,c}_{ij}}{\dev p_l}-C^{ac,bl}_{ij}-C^{bc,al}_{ij}\right)p_k\right]+\\
+\left(\frac{\dev C^{al,cm}_{ij}}{\dev p_b}+\frac{\dev C^{bl,cm}_{ij}}{\dev p_a}-\frac{\dev C^{ab,cm}_{ij}}{\dev p_l}-\frac{\dev C^{ba,cm}_{ij}}{\dev p_l}+\right.\\
\left.-\frac{\dev C^{al,cb}_{ij}}{\dev p_m}-\frac{\dev C^{bl,ca}_{ij}}{\dev p_m}+\frac{\dev^2 D^{ac,b}_{ij}}{\dev p_l\dev p_m}+\frac{\dev^2 D^{bc,a}_{ij}}{\dev p_l\dev p_m}\right)p_k+\\
+p_n\left[\left(\frac{\dev^2 D^{ab,l}_{ij}}{\dev p_m\dev p_n}+\frac{\dev^2 D^{ab,n}_{ij}}{\dev p_l\dev p_m}-\frac{\dev C^{an,bl}_{ij}}{\dev p_m}-\frac{\dev C^{bn,al}_{ij}}{\dev p_m}\right)\delta^c_k+\right.\\
\left.\left(\frac{\dev^2 D^{ac,n}_{ij}}{\dev p_l\dev p_m}+\frac{\dev C^{al,cm}_{ij}}{\dev p_n}-\frac{\dev C^{an,cm}_{ij}}{\dev p_l}-\frac{\dev C^{cn,al}_{ij}}{\dev p_m}\right)\delta^b_k+\right.\\
\left.\left(\frac{\dev^2 D^{ac,n}_{ij}}{\dev p_l\dev p_m}+\frac{\dev C^{al,cm}_{ij}}{\dev p_n}-\frac{\dev C^{an,cm}_{ij}}{\dev p_l}-\frac{\dev C^{cn,al}_{ij}}{\dev p_m}\right)\delta^b_k+\right.\\
\left.\left(\frac{\dev^2 D^{bc,n}_{ij}}{\dev p_l\dev p_m}+\frac{\dev C^{bl,cm}_{ij}}{\dev p_n}-\frac{\dev C^{bn,cm}_{ij}}{\dev p_l}-\frac{\dev C^{cn,bl}_{ij}}{\dev p_m}\right)\delta^a_k\right]=0
\end{multline}
\end{subequations}
The notation $\pc (a,b,c)$ means cyclic permutations of the indices $(a,b,c)$.
\end{lemma}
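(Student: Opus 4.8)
The plan is to follow verbatim the strategy of Lemma \ref{thm:1stOrderDefoP1}, now with the nonconstant undeformed bracket \eqref{eq:LPPVA} replacing \eqref{eq:defP1}. First I would substitute $\{\cdot_{\mlambda}\cdot\}=\{\cdot_{\mlambda}\cdot\}_{LP}+\epsilon\{\cdot_{\mlambda}\cdot\}_{[1]}$ into the PVA--Jacobi identity \eqref{eq:LambdaJac} evaluated on the three generators $p_i,p_j,p_k$, and expand in powers of $\epsilon$. The $\epsilon^0$ term is the PVA--Jacobi identity for $\{\cdot_{\mlambda}\cdot\}_{LP}$ alone, which vanishes identically because \eqref{eq:LPPVA} is the $\lambda$-bracket of the genuine Lie--Poisson structure \eqref{eq:LPbra} and hence satisfies PVA--Jacobi by Theorem \ref{thm:FourPVA}; the $\epsilon^{\geq 2}$ terms are discarded since we only require a \emph{first order} deformation. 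What remains to be set to zero is the six-term expression \eqref{eq:1ODPVAJac}, with $\{\cdot_{\mlambda}\cdot\}_1$ there replaced by $\{\cdot_{\mlambda}\cdot\}_{LP}$.

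The computational core is to evaluate these six nested brackets by the master formula \eqref{eq:MasterFormula}. Three of them have the Lie--Poisson bracket on the outside and the deformation \eqref{eq:LPPVADefo} on the inside, and three have the reverse composition. In every case the outer bracket is a derivation of $\hat{\A}$ in its second slot, so it must be applied both to the $p$-dependent coefficients $A,B,C,D$ (resp. to the coefficients $p_i,p_j$ of the Lie--Poisson bracket) and to the jet variables appearing in the inner bracket, with the sesquilinearity rules \eqref{eq:sesquilDef} turning each derivation $\dev_\alpha$ into a shift by $\lambda_\alpha$ or $\mu_\alpha$; for instance $\{{p_i}_{\mlambda}\dev_j p_k\}_{[1]}=(\dev_j+\lambda_j)\{{p_i}_{\mlambda}p_k\}_{[1]}$. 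The decisive difference from the flat cases \eqref{eq:defP1}--\eqref{eq:defP2} is that $\{{p_i}_{\mlambda}{p_j}\}_{LP}$ itself carries the linear factors $p_i\lambda_j+p_j\lambda_i$ and, crucially, the order-zero term $\dev_i p_j$: differentiating it by $\dev/\dev p_l$ produces Kronecker deltas, and applying $(\mlambda+\mdev)$ to the $p$-linear coefficients produces many extra terms, so the total polynomial reaches a higher degree in the jet variables than in the constant cases.

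After the expansion the result is a polynomial in $\lambda_a$, $\mu_b$ and in the jet coordinates $p_l$, $\dev_a p_l$, $\dev_{ab}p_l$, together with products such as $p_l\,\dev_a p_m$. I would then collect the coefficient of each linearly independent monomial and set it to zero. Grouping the monomials by their $(\mlambda,\mmu)$-bidegree and by the type of $p$-jet they multiply produces exactly the blocks \eqref{eq:PVAlll}--\eqref{eq:PVAd2d}: the symmetrizations $\sum_{\sigma}$ and the cyclic sums $\pc(a,b,c)$ come from the invariance of the underlying monomials under relabelling of the free vector indices, while the explicit $p_l$-prefactors visible in \eqref{eq:PVAllm}, \eqref{eq:PVAlld} and the later identities are precisely the trace of the nonconstant nature of \eqref{eq:LPPVA}.

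The main obstacle is not conceptual but the scale of the bookkeeping. Because the Lie--Poisson bracket is nonconstant, the family of distinct monomials is markedly larger than for \eqref{eq:defP1} or \eqref{eq:defP2}, and one must take care (i) to apply $(\mlambda+\mdev)^N$ and $(-\mlambda-\mdev)^M$ to the correct factors, remembering that under the left Leibniz rule \eqref{eq:lLeib} the shifted derivations also act on the partial derivatives $\dev f/\dev u^i_M$, and (ii) to avoid double-counting monomials that become equal only after the integration-by-parts identities built into the $(\mlambda+\mdev)$-calculus. As already in Lemma \ref{thm:SymP2}, the reliable way to certify that the list \eqref{eq:1stOrderDefo} is both complete and irredundant is to carry out the expansion symbolically and read off the coefficients, rather than simplifying by hand at intermediate stages.
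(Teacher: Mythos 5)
Your proposal coincides with the paper's own proof: the paper simply ``performs the same computation of Lemma \ref{thm:1stOrderDefoP1}'' for the bracket \eqref{eq:LPPVA}, i.e.\ it expands the PVA--Jacobi identity in $\epsilon$, discards the vanishing $\epsilon^0$ part and the higher orders, evaluates the six nested brackets of \eqref{eq:1ODPVAJac} (with $\{\cdot_{\mlambda}\cdot\}_{LP}$ in place of $\{\cdot_{\mlambda}\cdot\}_1$) via the master formula \eqref{eq:MasterFormula}, and obtains \eqref{eq:1stOrderDefo} by collecting the coefficients of the independent monomials in $\mlambda$, $\mmu$ and the derivatives of $p$, the undifferentiated $p$'s surviving as prefactors exactly as you describe. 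Your remarks on the extra terms produced by the nonconstant, $p$-linear coefficients of the Lie--Poisson bracket correctly identify the only substantive difference from the constant cases \eqref{eq:defP1}--\eqref{eq:defP2}.
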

\paragraph{Remark}Let us consider the trivial case $d=1$. The undeformed bracket reads $\{p_{\lambda}p\}_{LP}=-2p\lambda-p'$ (the prime means the only derivative of $p$, namely wrt $x$), which is the so-called \emph{Virasoro-Magri PVA} with central charge 0 (see Ex. 1.18 in \cite{BdSK09}). We easily get the well known result, shown for instance in \cite{dGMS05}, that such deformations do not exist in the scalar case. From the skewsymmetry conditions we get (now the indices have become useless) $A=0$, $2D=B$, $2C=B'$; moreover, \eqref{eq:PVAlll} is enough to get $D=0$, hence $B=C=0$.
\paragraph*{}
We follow the same approach as for the bracket $\{\cdot_{\mlambda}\cdot\}_1$, setting $d=2$ and expressing the parameters of the deformation according to \eqref{eq:S-APar}. In order to find the algebraic relations among the 43 parameters of the deformation we use the Mathematica package \texttt{SYM} \cite{DT05}. We get an overdetermined system of 45 equations for 9 unknown functions $\tilde{A}^{11}$, $\tilde{A}^{12}$, $\tilde{A}^{22}$, $\tilde{B}^{1,22}_{11}$, $\tilde{B}^{2,11}_{11}$, $\tilde{B}^{1,21}_{11}$, $\tilde{B}^{2,11}_{22}$, $\tilde{B}^{1,22}_{22}$, and $\tilde{B}^{2,12}_{22}$. The expressions for the remaining ones in terms of these nine are left to the Appendix \ref{app:DefoParLP}. The Janet basis of the system of PDEs according which the deformed $\lambda$-bracket is a PVA up to the first order are the following two equations
\begin{subequations}\label{eq:PVAJ_LP}
\begin{multline}\label{eq:condPVA1}
-\frac{3}{4}p^2\tilde{A}^{11}+\frac{3}{4}q^2\tilde{A}^{22}-\frac{3}{4}q^3\tilde{B}^{1,22}_{11}-\frac{3}{4}p^3\tilde{B}^{2,11}_{22}-\frac{3}{8}q^3\frac{\dev\tilde{A}^{22}}{\dev q}+\frac{3}{8}p^3\frac{\dev\tilde{A}^{11}}{\dev p}+\\
-\frac{1}{4}p^2q^2\frac{\dev\tilde{B}^{2,11}_{11}}{\dev p}-\frac{1}{2}p^3q\frac{\dev\tilde{B}^{1,22}_{22}}{\dev p}-\frac{1}{2}pq^2\tilde{B}^{2,11}_{11}-\frac{1}{4}pq^2\tilde{B}^{1,21}_{11}-\frac{1}{4}p^2q\tilde{B}^{2,12}_{22}+\\
-\frac{1}{2}p^2q\tilde{B}^{1,22}_{22}-p^2q^2\frac{\dev^2\tilde{A}^{11}}{\dev q^2}-\frac{1}{2}p^3q\frac{\dev^2\tilde{A}^{11}}{\dev p\dev q}+p^2q^2\frac{\dev^2\tilde{A}^{22}}{\dev p^2}+\frac{1}{2}pq^3\frac{\dev^2\tilde{A}^{22}}{\dev p\dev q}+\\
+\frac{5}{8}p^2q\frac{\dev\tilde{A}^{11}}{\dev q}+\frac{1}{4}p^2q^2\frac{\dev\tilde{B}^{2,12}_{22}}{\dev q}-\frac{1}{4}pq^2\frac{\dev\tilde{A}^{12}}{\dev q}-\frac{1}{2}pq^3\frac{\dev\tilde{B}^{2,11}_{11}}{\dev q}+\frac{1}{2}p^3q\frac{\dev\tilde{B}^{2,11}_{22}}{\dev q}+\\+\frac{1}{2}pq^3\frac{\dev\tilde{B}^{1,22}_{11}}{\dev p}
+\frac{1}{4}p^2q\frac{\dev\tilde{A}^{12}}{\dev p}-\frac{1}{4}p^2q^2\frac{\dev\tilde{B}^{1,22}_{22}}{\dev q}+\frac{1}{4}p^2q^2\frac{\dev\tilde{B}^{1,21}_{11}}{\dev p}-\frac{5}{8}pq^2\frac{\dev\tilde{A}^{22}}{\dev p}=\\=0
\end{multline}
and
\begin{multline}\label{eq:condPVA2}
-5p^2q^2\frac{\dev^2\tilde{A}^{12}}{\dev p\dev q}-2p^3q\frac{\dev^2\tilde{A}^{12}}{\dev p^2}+6p^2q^2\frac{\dev^2\tilde{A}^{11}}{\dev q^2}+5p^3q\frac{\dev^2\tilde{A}^{11}}{\dev p\dev q}+2pq^3\frac{\dev^2\tilde{A}^{22}}{\dev p \dev q}+\\
+\frac{11}{4}p^2\tilde{A}^{11}-\frac{7}{4}q^2\tilde{A}^{22}+\frac{15}{4}q^3\tilde{B}^{1,22}_{11}+\frac{3}{4}p^3\tilde{B}^{2,11}_{22}-\frac{1}{8}q^3\frac{\dev\tilde{A}^{22}}{\dev q}-\frac{19}{8}p^3\frac{\dev\tilde{A}^{11}}{\dev p}+\\
+p^4\frac{\dev^2\tilde{A}^{11}}{\dev p^2}+q^4\frac{\dev^2\tilde{A}^{22}}{\dev q^2}+\frac{5}{4}p^2q^2\frac{\dev\tilde{B}^{2,11}_{11}}{\dev p}+\frac{1}{2}p^3q\frac{\dev\tilde{B}^{1,22}_{22}}{\dev p}+\frac{5}{2}pq^2\tilde{B}^{2,11}_{11}+\frac{9}{4}pq^2\tilde{B}^{1,21}_{11}+\\
-\frac{3}{4}p^2q\tilde{B}^{2,12}_{22}+\frac{1}{2}p^2q\tilde{B}^{1,22}_{22}+2q^4\frac{\dev\tilde{B}^{1,22}_{11}}{\dev q}+2pq^3\frac{\dev\tilde{B}^{1,21}_{11}}{\dev q}-2p^3q\frac{\dev\tilde{B}^{2,12}_{22}}{\dev p}-2p^4\frac{\dev\tilde{B}^{2,11}_{22}}{\dev p}+\\
-2pq^3\frac{\dev^2\tilde{A}^{12}}{\dev q^2}-\frac{17}{8}p^2q\frac{\dev\tilde{A}^{11}}{\dev q}-\frac{9}{4}p^2q^2\frac{\dev\tilde{B}^{2,12}_{22}}{\dev q}+\frac{1}{4}pq^2\frac{\dev\tilde{A}^{12}}{\dev q}+\frac{5}{2}pq^3\frac{\dev\tilde{B}^{2,11}_{11}}{\dev q}+\\
-\frac{7}{2}p^3q\frac{\dev\tilde{B}^{2,11}_{22}}{\dev q}+\frac{1}{2}pq^3\frac{\dev\tilde{B}^{1,22}_{11}}{\dev p}-\frac{5}{4}p^2q\frac{\dev\tilde{A}^{12}}{\dev p}+\frac{1}{4}p^2q^2\frac{\dev\tilde{B}^{1,22}_{22}}{\dev q}+\frac{3}{4}p^2q^2\frac{\dev\tilde{B}^{1,21}_{11}}{\dev p}+\\+
\frac{13}{8}pq^2\frac{\dev\tilde{A}^{22}}{\dev p}=0.
\end{multline}
\end{subequations}
\paragraph*{}
Now, let us consider the trivial deformations of \eqref{eq:defP1}, \eqref{eq:defP2}, and \eqref{eq:LPPVA}, namely the deformed brackets given by performing a general Miura transformation \eqref{eq:Miura} of the first order to the undeformed brackets. Such a change of coordinates will have the form
\begin{equation*}
 p_i\mapsto P_i=p_i+\sum_{j,k=1,2}\epsilon F^{jk}_i(p,q)\dev_jp_k
\end{equation*}
and thus depends on 8 arbitrary functions of $(p_1\equiv p,p_2\equiv q)$. We compute $\{{P_i}(p)_{\mlambda}{P_j}(p)\}_{1,2,LP}$, which is in all three cases very straightforward. We start by the expansion to the order $\epsilon$,
\begin{equation*}
 \{{P_i}_{\mlambda}{P_j}\}_{1,2,LP}=
  \{{p_i}_{\mlambda}{p_j}\}_{1,2,LP}+\epsilon\left(\left\{{F^{al}_i\dev_ap_l}_{\mlambda}{p_j}\right\}_{1,2,LP}+\left\{{p_i}_{\mlambda}{F^{al}_j\dev_ap_l}\right\}_{1,2,LP}\right)+O(\epsilon^2)
\end{equation*}
and then we use the master formula \eqref{eq:MasterFormula} for the two latter brackets. The expression we get is written in terms of the `old' coordinates; up to the first order, we can invert the transformation by $p_i=P_i-\epsilon F^{al}_i(P)\dev_aP_l$, getting the formula for the deformed bracket $\{{P_i(p)}_{\mlambda}{P_j(p)}\}_{1,2,LP}=\{{P_i}_{\mlambda}{P_j}\}_{1,2,LP}+\epsilon\{{P_i}_{\mlambda}{P_j}\}_{[1]}$. For convenience, we provide only the 9 parameters of each first order deformed bracket we have chosen to express all the other ones in terms of. For deformations of \eqref{eq:defP1} we have:
\begin{subequations}\label{eq:MiuraPVAP1}
\begin{align}
\tilde{A}^{11}&=F^{11}_2\\
2\tilde{A}^{12}&=F^{21}_2-F^{12}_1\\
\tilde{A}^{22}&=-F^{22}_1\\
B^{1,12}_{11}&=2\frac{\dev F^{12}_1}{\dev p}-2\frac{\dev F^{11}_1}{\dev q}\\
B^{1,22}_{11}&=2\frac{\dev F^{22}_1}{\dev p}-\frac{\dev F^{21}_1}{\dev q}\\
B^{2,11}_{11}&=-\frac{\dev F^{21}_1}{\dev p}\\
B^{2,21}_{22}&=2\frac{\dev F^{11}_2}{\dev q}-2\frac{\dev F^{22}_2}{\dev p}\\
B^{2,11}_{22}&=2\frac{\dev F^{11}_2}{\dev q}-\frac{\dev F^{12}_2}{\dev p}\\
B^{2,12}_{22}&=-\frac{\dev F^{12}_2}{\dev q}
\end{align}
\end{subequations}
Following the same procedure, the parameters we chose for the deformations of \eqref{eq:defP2} are
\begin{subequations}\label{eq:MiuraPVAP2}
\begin{align}
\tilde{A}^{11}&=-F^{11}_1+F^{12}_2\\
2\tilde{A}^{12}&=F^{22}_2-F^{21}_1-F^{12}_1\\
\tilde{A}^{22}&=-F^{22}_1\\
B^{1,11}_{11}&=2\frac{\dev F^{11}_1}{\dev q}-2\frac{\dev F^{12}_1}{\dev p}\\
B^{1,21}_{11}&=2\frac{\dev F^{21}_1}{\dev q}-\frac{\dev F^{22}_1}{\dev p}\\
B^{1,12}_{22}&=2\frac{\dev F^{12}_2}{\dev p}-2\frac{\dev F^{11}_2}{\dev q}\\
B^{2,11}_{22}&=2\frac{\dev F^{11}_2}{\dev q}-\frac{\dev F^{12}_2}{\dev p}-\frac{\dev F^{21}_2}{\dev p}\\
B^{2,12}_{22}&=\frac{\dev F^{12}_2}{\dev q}-\frac{\dev F^{21}_2}{\dev q}\\
B^{2,21}_{22}&=2\frac{\dev F^{21}_2}{\dev q}-2\frac{\dev F^{22}_2}{\dev p}
\end{align}
\end{subequations}
Finally, for the trivial deformations of \eqref{eq:LPPVA} we get
\begin{subequations}\label{eq:MiuraPVALP}
\begin{align}\label{eq:MPVA1}
\tilde{A}^{11}&=qF^{11}_1-2pF^{11}_2-qF^{12}_2\\ \label{eq:MPVA2}
2\tilde{A}^{12}&=pF^{11}_1+2q F^{12}_1-pF^{12}_2+q F^{21}_2-2p F^{21}_2-qF^{22}_2\\ \label{eq:MPVA3}
\tilde{A}^{22}&=pF^{21}_1+2q F^{22}_1-pF^{22}_2\\ \label{eq:MPVA4}
B^{1,21}_{11}&=F^{12}_1-2q\frac{\dev F^{21}_1}{\dev q}+p\frac{\dev F^{12}_1}{\dev p}-2p\frac{\dev F^{21}_1}{\dev p}+q\frac{\dev F^{22}_1}{\dev p}\\ \label{eq:MPVA5}
B^{12,2}_{11}&=F^{22}_1+p\frac{\dev F^{12}_1}{\dev q}+2p\frac{\dev F^{21}_1}{\dev q}-q\frac{\dev F^{22}_1}{\dev q}-4p\frac{\dev F^{22}_1}{\dev p}\\ \label{eq:MPVA6}
B^{2,11}_{11}&=-F^{12}_1-2p\frac{\dev F^{11}_1}{\dev q}+p\frac{\dev F^{12}_1}{\dev p}+2p\frac{\dev F^{21}_1}{\dev p}+q\frac{\dev F^{22}_1}{\dev p}\\ \label{eq:MPVA7}
B^{2,12}_{22}&= F^{21}_2+p\frac{\dev F^{11}_2}{\dev q}-2q\frac{\dev F^{12}_2}{\dev q}+q\frac{\dev F^{21}_2}{\dev q}-2p\frac{\dev F^{12}_2}{\dev p}\\ \label{eq:MPVA8}
B^{2,11}_{22}&=F^{11}_2+q\frac{\dev F^{21}_2}{\dev p}+2q\frac{\dev F^{12}_2}{\dev p}-p\frac{\dev F^{11}_2}{\dev p}-4q\frac{\dev F^{11}_2}{\dev q}\\ \label{eq:MPVA9}
B^{2,12}_{22}&=-F^{21}_2-2q\frac{\dev F^{22}_2}{\dev p}+q\frac{\dev F^{21}_2}{\dev q}+2q\frac{\dev F^{12}_2}{\dev q}+p\frac{\dev F^{11}_2}{\dev q}
\end{align}
\end{subequations}
In the three sets of equations \eqref{eq:MiuraPVAP1}, \eqref{eq:MiuraPVAP2}, and \eqref{eq:MiuraPVALP} we have dropped the tilde from the parameters $B$'s because, by definition \eqref{eq:S-APar}, we have $\tilde{B}^{a,bc}_{ii}=B^{a,bc}_{ii}$.
Since the three brackets we have just defined are the Miura transformed of the undeformed ones, they are a first order deformation of a PVA bracket; the sets of coefficients satisfy, as it can be easily checked, the corresponding PVA--Jacobi identities up to the first order.

We can regard each set of equations \eqref{eq:MiuraPVAP1}, \eqref{eq:MiuraPVAP2}, and \eqref{eq:MiuraPVALP} as an inhomogeneous linear system of 9 PDEs for the 8 unknown functions $F$'s. A solution of the system, if there exists, is the set of the eight parameters of a Miura transformation which produces a given coboundary.

The compatibility conditions for \eqref{eq:MiuraPVAP1} are \eqref{eq:PVAJ_P1}; the compatibility conditions for \eqref{eq:MiuraPVAP2} are \eqref{eq:PVAJ_P2}. Computing the compatibility conditions for \eqref{eq:MiuraPVALP} we get a system of two second order differential equations, whose Janet basis is exactly \eqref{eq:condPVA1} and \eqref{eq:condPVA2}.

That means that a generic first order cocycle, i.e. a first order deformed bracket, can be written in terms of the nine parameters if and only if they satisfy the corresponding pair of linear PDEs \eqref{eq:PVAJ_P1}, \eqref{eq:PVAJ_P2} or \eqref{eq:PVAJ_LP}. On the other hand, the same conditions allow to find the eight parameters of a Miura transformations for which we get that cocycle. It follows that every cocycle in $\Omega^2_{[1]}(\hat{\A},\{\cdot_{\mlambda}\cdot\})$ is a coboundary, so that $H^2_{[1]}(\hat{\A},\{\cdot_{\mlambda}\cdot\})=0$, for $(\hat{\A},\{\cdot_{\mlambda}\cdot\})$ a $2$-dimensional Poisson Vertex Algebra of hydrodynamic type with $2$ derivations.

\section{Concluding remarks}
In this paper we have formulated the theory of multidimensional Poisson Vertex Algebras, showing how it can be applied to the study of evolutionary Hamiltonian PDEs. In particular, we have proved a first result in the theory of deformations of multidimensional Poisson brackets of hydrodynamic type, namely the characterization of the first order first and second cohomology groups for the $d=n=2$ normal forms of the bracket, according to the classification by Ferapontov and collaborators \cite{FOS11}. 

We have proved that only the Lie-Poisson $\lambda$-bracket can have a trivial first cohomology group, while the second cohomology group is trivial for all the first order deformations of the normal forms of $\lambda$-brackets of hydrodynamic type.  We will devote further investigations to higher order deformations of the $\lambda$-brackets, aiming to characterize the full second cohomology group, in the spirit of \cite{G01}, \cite{dGMS05} and \cite{DZ}.
A deeper analysis of the classification of the normal forms of the Poisson brackets of hydrodynamic type seems to be an important task in order to provide general results. For example, we notice that a constant Poisson bracket with generating metrics $g^2=\lambda g^1$ cannot be reduced to either \eqref{eq:defP1} or \eqref{eq:defP2}. 
\appendix
\section{Proof of Lemma \ref{lemma:PVAJacobi}}\label{app:proofPVAJacobi}
Let us consider the three generators $u^i(x)$, $u^j(y)$ and $u^k(z)$. For convenience, we drop the boldface typesetting used in Section \ref{ssec:mPVA} to denote that the variables $x,y,z$ are coordinates in $\R^d$. Let us consider the double Fourier-like transform
\begin{equation}
\int e^{\mlambda\cdot(x-z)}e^{\mmu\cdot(y-z)}\{u^i(x),\{u^j(y),u^k(z)\}\}\ud x\ud y 
\end{equation}
 The first step is to expand the outer bracket , which gives
\begin{multline*}
\int e^{\mlambda\cdot(x-z)}e^{\mmu\cdot(y-z)}\left(\mdev_z^L\{u^i(x),u^l(z)\}\right)\frac{\dev \{u^j(y),u^k(z)\}}{\dev u^l_L}\ud x\ud y\\
=\int e^{\mlambda\cdot(x-z)}\frac{\dev}{\dev u^l_L}\left(e^{\mmu\cdot(y-z)}\{u^j(y),u^k(z)\}\right)\left(\mdev_z^L\{u^i(x),u^l(z)\}\right)\ud x\ud y.
\end{multline*}
If we perform the integration with respect to $y$, which appears only in the first parenthesis, we get by definition the $\lambda$-bracket (with parameter $\mmu$) of the two generators $u^j$ and $u^k$. Thus, we have got the partial result
\begin{multline*}
\int e^{\mlambda\cdot(x-z)}e^{\mmu\cdot(y-z)}\{u^i(x),\{u^j(y),u^k(z)\}\}\ud x\ud y\\=\int e^{\mlambda\cdot(x-z)}\{u^i(x),\{u^j_{\mmu}u^k\}(z)\}\ud x  .
\end{multline*}
Let us for simplicity denote $\{u^j_{\mmu}u^k\}(z)=g(z)$. A step backwards in the computation brings us back to
\begin{multline*}
\int e^{\mlambda\cdot(x-z)}\frac{\dev g(z)}{\dev u^l_L}\left(\mdev_z^L\{u^i(x),u^l(z)\}\right)\ud x\\
=\binom{L}{T}\int \frac{\dev g(z)}{\dev u^l_L}e^{\mlambda\cdot(x-z)}\left(\mdev_z^TP^{li}_S(z)\right)\mdev_z^{L-T+S}\delta(x-z)\ud x
\end{multline*}
where the second line is obtained by simply expanding the derivations of the bracket. By substituting as usual $\mdev_z\delta(x-z)$ with $(-\mdev_x)\delta(x-z)$ and integrating by parts we get
\begin{multline*}
 \binom{L}{T} \frac{\dev g(z)}{\dev u^l_L}\mlambda^{L-T+S}\left(\mdev_z^TP^{li}_S(z)\right)\\
=\frac{\dev g(z)}{\dev u^l_L}(\mlambda+\mdev_z)^TP^{li}_S(z)\mlambda^S\\
=\{u^i_{\mlambda}g\}
\end{multline*}
where the last equality is given by \eqref{eq:MasterRight}. Summarizing, we have
\begin{equation}\label{eq:JacobiFour1}
\int e^{\mlambda\cdot(x-z)}e^{\mmu\cdot(y-z)}\{u^i(x),\{u^j(y),u^k(z)\}\}\ud x\ud y=\{u^i_{\mlambda}\{u^j_{\mmu}u^k\}\}. 
\end{equation}
The second term for the Jacobi identity among three coordinate functions is the same with $u^i(x)$ replaced by $u^j(y)$. The same computations hold provided the switching, and this gives as second term of the Fourier transform of the Jacobi identity
\begin{equation}\label{eq:JacobiFour2}
\int e^{\mlambda\cdot(x-z)}e^{\mmu\cdot(y-z)}\{u^j(y),\{u^i(x),u^k(z)\}\}\ud x\ud y=\{u^j_{\mmu}\{u^j_{\mlambda}u^k\}\}.
\end{equation}
The RHS term of the PVA-Jacobi identity is more complicated to achieve. As before, let us start from expanding the usual formula for the Poisson bracket
\begin{multline*}
\int e^{\mlambda\cdot(x-z)}e^{\mmu\cdot(y-z)}\{\{u^i(x),u^j(y)\},u^k(z)\}\\
=\int e^{\mlambda\cdot(x-z)}e^{\mmu\cdot(y-z)}\frac{\dev \{u^i(x),u^j(y)\}}{\dev u^l_L(y)}\mdev^L_y\{u^l(y),u^k(z)\}\ud x\ud y\\
=\int e^{\mlambda\cdot(x-z)}e^{\mmu\cdot(y-z)}\frac{\dev \{u^i(x),u^j(y)\}}{\dev u^l_L(y)}\mdev^L_y\left(P^{kl}_M(z)\mdev_z^M\delta(y-z)\right)\ud x\ud y\\
=\int e^{\mlambda\cdot(x-z)}e^{\mmu\cdot(y-z)}\frac{\dev \{u^i(x),u^j(y)\}}{\dev u^l_L(y)}P^{kl}_M(z)\mdev^L_y\mdev_z^M\delta(y-z)\ud x\ud y
\end{multline*}
The derivative with respect to $y$ in the third does not act on $P^{kl}_M(z)$, so we could move it further. Moreover, for convenience we can trade $\mdev^L_y\mdev^M_z\delta(y-z)$ for $(-1)^{|L|}(-\mdev_y)^{M+L}\delta(y-z)$ exchanging two times the variables respect to which we derive the Dirac's delta. It allows us to integrate by parts the delta's derivatives, in order to get
\begin{multline*}
=(-1)^{|L|}\int \mdev_{y}^{L+M}\left(e^{\mmu\cdot(y-z)}\frac{\dev \{u^i(x),u^j(y)\}}{\dev u^l_L(y)}\right)e^{\mlambda\cdot(x-z)}P^{kl}_M(z)\delta(y-z)\ud x\ud y\\
=(-1)^{|L|}\binom{L+M}{T}\int\mdev_y^T\left(\frac{\dev \{u^i(x),u^j(y)\}}{\dev u^l_L(y)}\right)\mmu^{L+M-T} e^{\mmu\cdot(y-z)}e^{\mlambda\cdot(x-z)}\cdot\\\cdot P^{kl}_M(z)\delta(y-z)\ud x\ud y\\
=(-1)^{|L|}\binom{L+M}{T}\int\mdev_z^T\left(\frac{\dev \{u^i(x),u^j(z)\}}{\dev u^l_L(z)}\right)\mmu^{L+M-T} e^{\mlambda\cdot(x-z)}P^{kl}_M(z)\ud x.
 \end{multline*}
From the form for $\{u^i(x),u^j(z)\}$ we see that the partial derivatives act only on the coefficients $P^{ji}_N$. So, we get that our expression is equal to
\begin{multline*}
 (-1)^{|L|}\binom{L+M}{T}\mmu^{L+M-T}\cdot\\\cdot\int\mdev_z^T\left(\frac{\dev P^{ji}_N(z)}{\dev u^l_L(z)}\mdev^N_z\delta(x-z)\right) e^{\mlambda\cdot(x-z)} P^{kl}_M(z)\ud x
\end{multline*}
Basically we repeat the computation applying the same rules for multiderivatives of product and the integration by parts of the Dirac's delta and we end, after the integration, with
\begin{equation*}
(-1)^{|L|}\binom{L+M}{T}\binom{T}{R}P^{kl}_M(z)\mmu^{L+M-T}\mlambda^{T-R+N}\mdev_z^R\frac{\dev P^{ji}_N(z)}{\dev u^l_L(z)}. 
\end{equation*}
The rules for the product of binomials hold also in the multiindices case, since they are only a product of ordinary binomials. It means, by slightly abusing the notation, that
\begin{equation*}
\binom{A}{B}\binom{B}{C}=\frac{A!}{B!(A-B)!}\frac{B!}{C!(B-C)!}=\binom{A}{A-B,C} 
\end{equation*}
In our case, calling $L+M-T=Q$, we get
\begin{multline*}
 (-1)^{|L|}\binom{L+M}{Q,R}P^{kl}_M(z)\mmu^Q\mlambda^{L+M-Q-R}\mdev^R\frac{\dev P^{ji}_N(z)}{\dev u^l_L(z)} \mlambda^N\\
=(-1)^{|L|}P^{kl}_M(z)(\mlambda+\mmu+\mdev)^{L+M}\frac{\dev P^{ji}_N(z)}{\dev u^l_L(z)} \mlambda^N.
\end{multline*}
Finally, we can adsorb the sign in front of the expression, and we get
\begin{equation*}
 P^{kl}_M(z)(\mlambda+\mmu+\mdev)^M(-\mlambda-\mmu-\mdev)^L\frac{\dev P^{ji}_N(z)}{\dev u^l_L(z)} \mlambda^N
\end{equation*}
which is clearly the expression in terms of \eqref{eq:MasterLeft} of $\{u^i_{\mlambda} u^j\}_{\mlambda+\mmu}u^k\}$, the RHS of the PVA-Jacobi identity. We have finally proved that taking the double Fourier transform with respect to $e^{\mlambda\cdot(x-z)+\mmu\cdot(y-z)}$ of the Jacobi identity for the Poisson bracket of the generators gives the PVA-Jacobi identity among them.
\section{Components of the 1st order deformation of the $\lambda$-bracket for $d=2$}\label{app:DefoPar}
\subsection{Deformation of \eqref{eq:defP1}}\label{app:DefoPar1}
\begin{align*}
B^{1,11}_{11}&=0&B^{2,22}_{22}&=0\\
B^{2,22}_{11}&=0&B^{1,11}_{22}&=0\\
B^{2,21}_{11}&=0&B^{1,12}_{22}&=0\\
\tilde{B}^{1,11}_{12}&=0&\tilde{B}^{2,22}_{12}&=0\\
B^{1,21}_{11}&=-B^{2,11}_{11}&B^{2,12}_{22}&=-B^{1,22}_{22}\\
\tilde{B}^{1,12}_{12}&=-B^{2,11}_{22}+\frac{\dev \tilde{A}^{11}}{\dev q}&\tilde{B}^{2,21}_{12}&=-B^{1,22}_{11}-\frac{\dev \tilde{A}^{22}}{\dev p}\\
\tilde{B}^{2,11}_{12}&=-\frac{1}{2}B^{1,12}_{11}-\frac{\dev \tilde{A}^{12}}{\dev p}&\tilde{B}^{1,22}_{12}&=-\frac{1}{2}B^{2,21}_{22}+\frac{\dev \tilde{A}^{12}}{\dev q}\\
\tilde{B}^{1,21}_{12}&=\frac{\dev \tilde{A}^{12}}{\dev p}&\tilde{B}^{2,12}_{12}&=-\frac{\dev \tilde{A}^{12}}{\dev q}\\
B^{2,12}_{11}&=B^{1,22}_{11}+2\frac{\dev \tilde{A}^{22}}{\dev q}&B^{1,21}_{22}&=B^{2,11}_{22}-2\frac{\dev \tilde{A}^{11}}{\dev p}
\end{align*}
\begin{align*}
 \tilde{D}^{11,1}&=0&\tilde{D}^{22,2}&=0\\
\tilde{D}^{12,1}&=-\frac{1}{8}B^{1,12}_{11}&\tilde{D}^{12,2}&=\frac{1}{8}B^{2,21}_{22}\\
\tilde{D}^{22,1}&=\frac{1}{2}\left(B^{2,11}_{22}-\frac{\dev\tilde{A}^{11}}{\dev q}\right)&\tilde{D}^{22,2}&=-\frac{1}{2}\left(B^{1,22}_{11}+\frac{\dev\tilde{A}^{22}}{\dev q}\right)
\end{align*}
\begin{align*}
 \tilde{C}^{11,11}&=0&\tilde{C}^{22,22}&=0\\
\tilde{C}^{11,12}&=\frac{1}{4}\left(\frac{\dev B^{2,11}_{22}}{\dev p}-\frac{\dev^2\tilde{A}^{11}}{\dev p\dev q}\right)&\tilde{C}^{21,22}&=-\frac{1}{4}\left(\frac{\dev B^{1,22}_{11}}{\dev q	}+\frac{\dev^2\tilde{A}^{22}}{\dev p\dev q}\right)\\
\tilde{C}^{11,21}&=-\frac{1}{8}\frac{\dev B^{1,12}_{11}}{\dev p}&\tilde{C}^{12,22}&=\frac{1}{8}\frac{\dev B^{2,21}_{22}}{\dev p}\\
\tilde{C}^{12,12}&=\frac{1}{2}\left(\frac{\dev B^{2,11}_{22}}{\dev q}-\frac{\dev^2\tilde{A}^{11}}{\dev q^2}\right)&\tilde{C}^{21,21}&=-\frac{1}{2}\left(\frac{\dev B^{1,22}_{11}}{\dev p}+\frac{\dev^2\tilde{A}^{22}}{\dev p^2}\right)\\
\tilde{C}^{12,21}&=0&\tilde{C}^{11,22}&=\frac{1}{8}\left(\frac{\dev B^{2,21}_{22}}{\dev p}-\frac{\dev B^{2,12}_{11}}{\dev q}\right)
\end{align*}
\subsection{Deformation of \eqref{eq:defP2}}\label{app:DefoPar2}
\begin{align*}
B^{1,11}_{22}&=0&B^{2,21}_{11}&=0\\
B^{1,12}_{11}&=0&B^{2,22}_{11}&=0\\
B^{2,22}_{22}&=0&B^{1,21}_{22}&=-\left(B^{1,12}_{22}+B^{2,11}_{22}\right)\\
B^{1,22}_{11}&=-\frac{\dev\tilde{A}^{22}}{\dev q}&B^{2,11}_{11}&=\frac{\dev\tilde{A}^{22}}{\dev p}\\
B^{2,12}_{11}&=\frac{\dev\tilde{A}^{22}}{\dev q}&B^{1,22}_{22}&-\left(B^{2,12}_{22}+B^{2,21}_{22}\right)\\
\tilde{B}^{2,21}_{11}&=0&\tilde{B}^{2,22}_{12}&=0\\
\tilde{B}^{1,12}_{12}&=-\frac{1}{2}B^{1,11}_{11}&\tilde{B}^{1,11}_{12}&=-\frac{1}{2}B^{1,12}_{22}\\
\tilde{B}^{1,22}_{12}&=-B^{2,11}_{11}+\frac{\dev\tilde{A}^{12}}{\dev q}&\tilde{B}^{2,21}_{12}&=\frac{1}{2}\left(B^{1,21}_{11}+\frac{\dev\tilde{A}^{22}}{\dev p}\right)\\
\tilde{B}^{2,11}_{12}&=B^{2,12}_{22}+\frac{1}{2}B^{2,21}_{22}-\frac{\dev\tilde{A}^{11}}{\dev q}+\frac{\dev\tilde{A}^{12}}{\dev p}&\tilde{B}^{2,12}_{12}&=B^{2,11}_{22}-\frac{1}{2}B^{1,21}_{11}-\frac{\dev\tilde{A}^{12}}{\dev q}-\frac{1}{2}\frac{\dev\tilde{A}^{22}}{\dev p}
\end{align*}
\begin{align*}
 \tilde{D}^{11,1}&=\frac{1}{4}B^{1,12}_{22}&\tilde{D}^{11,2}&=-\frac{1}{4}B^{1,11}_{11}\\
\tilde{D}^{12,1}&=\frac{1}{8}\left(B^{1,11}_{11}-B^{2,21}_{22}\right)&\tilde{D}^{12,2}&=-\frac{1}{8}\left(B^{1,21}_{11}+\frac{\dev\tilde{A}^{22}}{\dev p}\right)\\
\tilde{D}^{22,1}&=-\frac{1}{2}\left(B^{1,21}_{11}+\frac{\dev\tilde A^{22}}{\dev p}\right)&\tilde D^{22,2}&=0
\end{align*}
\begin{align*}
 \tilde C^{11,11}&=\frac{1}{4}\frac{\dev B^{1,12}_{22}}{\dev p}&\tilde C^{12,22}&=-\frac{1}{8}\left(\frac{\dev B^{1,21}_{11}}{\dev q}+\frac{\dev^2\tilde A^{22}}{\dev p\dev q}\right)\\
\tilde C^{21,22}&=\frac{1}{8}\left(\frac{\dev B^{1,21}_{11}}{\dev q}+\frac{\dev^2\tilde A^{22}}{\dev p\dev q}\right)&\tilde C^{22,22}&=0\\
\tilde C^{11,21}&=\frac{1}{8}\left(\frac{\dev B^{1,11}_{11}}{\dev p}-\frac{\dev B^{2,21}_{22}}{\dev p}\right)&\tilde C^{11,12}&=\frac{1}{8}\left(\frac{\dev B^{1,12}_{22}}{\dev q}-\frac{\dev B^{1,11}_{11}}{\dev p}\right)\\
\tilde C^{12,12}&=-\frac{1}{4}\frac{\dev B^{1,11}_{11}}{\dev q}&\tilde C^{21,21}&=\frac{1}{4}\left(\frac{\dev B^{1,21}_{11}}{\dev p}+\frac{\dev^2\tilde A^{22}}{\dev p^2}\right)
\end{align*}
\begin{align*}
\tilde C^{11,22}&=\frac{1}{8}\left(2\frac{\dev B^{1,11}_{11}}{\dev q}-2\frac{\dev B^{2,12}_{22}}{\dev q}-\frac{\dev B^{2,21}_{22}}{\dev q}-2\frac{\dev B^{1,21}_{11}}{\dev p}+2\frac{\dev^2\tilde A^{11}}{\dev q^2}-4\frac{\dev^2\tilde A^{12}}{\dev p\dev q}\right)\\
\tilde C^{12,21}&=-\frac{1}{8}\left(2\frac{\dev B^{2,12}_{22}}{\dev q}+\frac{\dev B^{1,11}_{11}}{\dev q}-\frac{\dev B^{1,21}_{11}}{\dev p}+2\frac{\dev^2\tilde A^{22}}{\dev p^2}+2\frac{\dev^2\tilde A^{11}}{\dev q^2}-4\frac{\dev^2\tilde A^{12}}{\dev p\dev q}\right)
\end{align*}

\subsection{Deformation of \eqref{eq:LPPVA}}\label{app:DefoParLP}
\begin{align*}
B^{1,11}_{11}&=\frac{q^2}{p^2}\left(\frac{\dev \tilde{A}^{22}}{\dev q}+2 B^{1,22}_{11}\right)+\frac{q}{p}\left(-2 \frac{\dev \tilde{A}^{12}}{\dev q}+\frac{5}{2} \frac{\dev \tilde{A}^{22}}{\dev p}+2 B^{1,21}_{11}+B^{2,11}_{11}\right)+\\
&-\left(-\frac{\dev \tilde{A}^{11}}{\dev q}+\frac{\dev \tilde{A}^{12}}{\dev p}+B^{1,22}_{22}+B^{2,12}_{22}\right)+\frac{p \left(\frac{\dev \tilde{A}^{11}}{\dev p}-2 B^{2,11}_{22}\right)}{2 q}+\\
&+\frac{\tilde{A}^{12}}{p}-\frac{2 q \tilde{A}^{22}}{p^2}-\frac{\tilde{A}^{11}}{q}\\
B^{2,22}_{22}&=\frac{p^2}{q^2}\left(-\frac{\dev\tilde{A}^{11}}{\dev p}+2 B^{2,11}_{22}\right)+\frac{p}{q} \left(-\frac{5}{2} \frac{\dev\tilde{A}^{11}}{\dev q}+2 \frac{\dev \tilde{A}^{12}}{\dev p}+B^{1,22}_{22}+2 B^{2,12}_{22}\right)+\\
&-\left(\frac{\dev\tilde{A}^{22}}{\dev p}-\frac{\dev\tilde{A}^{12}}{\dev q}+B^{1,21}_{11}+B^{2,11}_{11}\right)+\frac{q \left(-\frac{\dev\tilde{A}^{22}}{\dev q}-2 B^{1,22}_{11}\right)}{2 p}+\\
&+\frac{2 p\tilde{A}^{11}}{q^2}-\frac{\tilde{A}^{12}}{q}+\frac{\tilde{A}^{22}}{p}
\end{align*}
\begin{align*}
B^{1,11}_{22}&=B^{2,22}_{11}=0\\
B^{1,12}_{11}&=-\frac{2 p}{q}B^{1,11}_{11}\\
B^{2,21}_{22}&=-\frac{2 q}{p} B^{2,22}_{22}\\
B^{2,12}_{11}&=2 \frac{p^2}{q^2}B^{1,11}_{11}-\frac{p}{q}\left(2 B^{1,21}_{11}+2 B^{2,11}_{11}\right)-B^{1,22}_{11}\\
B^{1,21}_{22}&=2 \frac{q^2}{p^2}B^{2,22}_{22}-\frac{q}{p}\left(2 B^{1,22}_{22}+2 B^{2,12}_{22}\right)-B^{2,11}_{22}\\
B^{2,21}_{11}&=\frac{p}{q}\left(B^{1,21}_{11}+B^{2,11}_{11}\right)-\frac{p^2}{q^2}B^{1,11}_{11}\\
B^{1,12}_{22}&=\frac{q}{p}\left(B^{1,22}_{22}+B^{2,12}_{22}\right)-\frac{q^2}{p^2}B^{2,22}_{22}\\
\tilde{B}^{1,11}_{12}&=-\frac{1}{2} \left(\frac{q}{p}\left(B^{1,22}_{22}+B^{2,12}_{22}\right)-\frac{q^2}{p^2} B^{2,22}_{22}\right)\\
\tilde{B}^{2,22}_{12}&=-\frac{1}{2} \left(\frac{p}{q}(B^{1,21}_{11}+B^{2,11}_{11})-\frac{p^2}{q^2} B^{1,11}_{11}\right)
\end{align*}
\begin{align*}
\tilde{B}^{1,12}_{12}&=\frac{1}{2} \left(-\frac{2q}{p}B^{2,22}_{22}-B^{1,11}_{11}+2 B^{1,22}_{22}+2 B^{2,12}_{22}\right)\\
\tilde{B}^{2,21}_{12}&=\frac{1}{2} \left(-\frac{2 p}{q}B^{1,11}_{11}+2 B^{1,21}_{11}+2 B^{2,11}_{11}-B^{2,22}_{22}\right)\\
\tilde{B}^{1,22}_{12}&=-\frac{\dev \tilde{A}^{12}}{\dev q}+2\frac{\dev \tilde{A}^{22}}{\dev p}+ \frac{q}{p} \frac{\dev \tilde{A}^{22}}{\dev q}-\frac{2}{p}\tilde{A}^{22}+(2 B^{1,21}_{11}+B^{2,11}_{11})-\frac{3 p}{2q} B^{1,11}_{11}+\frac{2q}{p} B^{1,22}_{11}\\
\tilde{B}^{2,11}_{12}&=-2\frac{\dev \tilde{A}^{11}}{\dev q}+\frac{\dev \tilde{A}^{12}}{\dev p}-\frac{p}{q}\frac{\dev \tilde{A}^{11}}{\dev p}+\frac{2}{q}\tilde{A}^{11}+(B^{1,22}_{22}+2 B^{2,12}_{22})+\frac{2p}{q} B^{2,11}_{22}-\frac{3 q}{2p} B^{2,22}_{22}\\
\tilde{B}^{1,21}_{12}&=\frac{1}{2} \left(-\frac{4 \tilde{A}^{11}}{q}+4 \frac{\dev \tilde{A}^{11}}{\dev q}-2 \frac{\dev \tilde{A}^{12}}{\dev p}+\frac{2 p \frac{\dev \tilde{A}^{11}}{\dev p}}{q}-\frac{4 p B^{2,11}_{22}}{q}+\frac{3 q B^{2,22}_{22}}{p}+2 B^{1,11}_{11}-3 B^{1,22}_{22}-5 B^{2,12}_{22}\right)\\
\tilde{B}^{2,12}_{12}&=\frac{1}{2} \left(\frac{4 \tilde{A}^{22}}{p}+2 \frac{\dev \tilde{A}^{12}}{\dev q}-4 \frac{\dev \tilde{A}^{22}}{\dev p}-\frac{2 q \frac{\dev \tilde{A}^{22}}{\dev q}}{p}+\frac{3 p B^{1,11}_{11}}{q}-\frac{4 q B^{1,22}_{11}}{p}-5 B^{1,21}_{11}-3 B^{2,11}_{11}+2 B^{2,22}_{22}\right)
\end{align*}
\begin{align*}
\tilde{C}^{11,11}_{12}&=\frac{1}{4} \frac{\dev }{\dev p}\left(\frac{q (B^{1,22}_{22}+B^{2,12}_{22})}{p}-\left(\frac{q}{p}\right)^2 B^{2,22}_{22}\right)\\
\tilde{C}^{22,22}_{12}&=-\frac{1}{4} \frac{\dev }{\dev q}\left(\frac{p (B^{1,21}_{11}+B^{2,11}_{11})}{q}-\left(\frac{p}{q}\right)^2 B^{1,11}_{11}\right)\\
\tilde{C}^{11,12}_{12}&=-\frac{1}{8} \left(\frac{\dev }{\dev q}\left(\left(\frac{q}{p}\right)^2 B^{2,22}_{22}-\frac{q (B^{1,22}_{22}+B^{2,12}_{22})}{p}\right)+\frac{\dev }{\dev p}\left(-\frac{2 q B^{2,22}_{22}}{p}+B^{1,11}_{11}+2 B^{1,22}_{22}+2 B^{2,12}_{22}\right)\right)\\
\tilde{C}^{21,22}_{12}&=\frac{1}{8} \left(\frac{\dev }{\dev q}\left(-\frac{2 p B^{1,11}_{11}}{q}+2 B^{1,21}_{11}+2 B^{2,11}_{11}+B^{2,22}_{22}\right)+\frac{\dev }{\dev p}\left(\left(\frac{p}{q}\right)^2 B^{1,11}_{11}-\frac{p (B^{1,21}_{11}+B^{2,11}_{11})}{q}\right)\right)\\
\tilde{C}^{12,12}_{12}&=-\frac{1}{4} \frac{\dev }{\dev q}\left(-\frac{2 q B^{2,22}_{22}}{p}+B^{1,11}_{11}+2 B^{1,22}_{22}+2 B^{2,12}_{22}\right)\\
\tilde{C}^{21,21}_{12}&=\frac{1}{4} \frac{\dev }{\dev p}\left(-\frac{2 p B^{1,11}_{11}}{q}+2 B^{1,21}_{11}+2 B^{2,11}_{11}+B^{2,22}_{22}\right)\\
\tilde{C}^{12,22}_{12}&=-\frac{1}{8} \frac{\dev (B^{1,21}_{11}+B^{2,11}_{11}+2 B^{2,22}_{22})}{\dev q}\\
\tilde{C}^{11,21}_{12}&=\frac{1}{8} \frac{\dev (2 B^{1,11}_{11}+B^{1,22}_{22}+B^{2,12}_{22})}{\dev p}\\
\tilde{C}^{11,22}_{12}&=\frac{1}{4} \left(2 \left(\frac{\dev ^2\tilde{A}^{11}}{\dev q^2}-\frac{\dev ^2\tilde{A}^{22}}{\dev p^2}\right)-\frac{\dev }{\dev p}\left(\frac{q \left(\frac{\dev \tilde{A}^{22}}{\dev q}+2 B^{1,22}_{11}\right)}{p}-\frac{2 \tilde{A}^{22}}{p}-\frac{3 p B^{1,11}_{11}}{2 q}+3 B^{1,21}_{11}+2 B^{2,11}_{11}\right)\right.\\
&+\left.\frac{\dev }{\dev q}\left(\frac{p \left(\frac{\dev \tilde{A}^{11}}{\dev p}-2B^{2,11}_{22}\right)}{q}-\frac{2 \tilde{A}^{11}}{q}+\frac{3 q B^{2,22}_{22}}{2 p}+2 B^{1,11}_{11}-B^{1,22}_{22}-2 B^{2,12}_{22}\right)+\frac{p }{q}\frac{\dev B^{1,11}_{11}}{\dev p}\right)\\
\tilde{C}^{12,21}_{12}&=\frac{1}{8} \left(4 \frac{\dev ^2\tilde{A}^{11}}{\dev q^2}-4 \frac{\dev ^2\tilde{A}^{22}}{\dev p^2}+\right.\\
&+\left.\frac{\dev }{\dev p}\left(-\frac{2 q \left(\frac{\dev \tilde{A}^{22}}{\dev q}+2 B^{1,22}_{11}\right)}{p}+\frac{4 \tilde{A}^{22}}{p}+\frac{3 p B^{1,11}_{11}}{q}-5 B^{1,21}_{11}-3 B^{2,11}_{11}+2 B^{2,22}_{22}\right)\right.\\
&\left.+\frac{\dev }{\dev q}\left(\frac{2 p \left(\frac{\dev \tilde{A}^{11}}{\dev p}-2B^{2,11}_{22}\right)}{q}-\frac{4 \tilde{A}^{11}}{q}+\frac{3 q B^{2,22}_{22}}{p}+2 B^{1,11}_{11}-3 B^{1,22}_{22}-5 B^{2,12}_{22}\right)\right.\\
&\left.+\frac{2 q \frac{\dev B^{2,22}_{22}}{\dev q}}{p}\right)
\end{align*}
\begin{align*}
\tilde{D}^{11,1}_{12}&=\frac{1}{4} \left(\frac{q (B^{1,22}_{22}+B^{2,12}_{22})}{q}-\frac{q^2 B^{2,22}_{22}}{q^2}\right)\\
\tilde{D}^{22,2}_{12}&=-\frac{1}{4} \left(\frac{q (B^{1,21}_{11}+B^{2,11}_{11})}{q}-\frac{q^2 B^{1,11}_{11}}{q^2}\right)\\
\tilde{D}^{11,2}_{12}&=-\frac{1}{4} \left(-\frac{2 q B^{2,22}_{22}}{q}+B^{1,11}_{11}+2 B^{1,22}_{22}+2 B^{2,12}_{22}\right)\\
\tilde{D}^{22,1}_{12}&=\frac{1}{4} \left(-\frac{2 q B^{1,11}_{11}}{q}+2 B^{1,21}_{11}+2 B^{2,11}_{11}+B^{2,22}_{22}\right)\\
\tilde{D}^{12,1}_{12}&=\frac{1}{8} (2 B^{1,11}_{11}+B^{1,22}_{22}+B^{2,12}_{22})\\
\tilde{D}^{12,2}_{12}&=-\frac{1}{8} (B^{1,21}_{11}+B^{2,11}_{11}+2 B^{2,22}_{22})
\end{align*}

\section*{Acknowledgements}
I want to thank in particular prof.~B.~Dubrovin for introducing me to the subject of the deformation of Poisson brackets of hydrodynamic type, and for the accurate supervision of my research. I'm very grateful for the discussions with D.~Valeri, A.~De Sole and F.~Magri about Poisson Vertex Algebras during the the series of seminars in Febraury 2013 at Universit\`a degli Studi di Milano--Bicocca and with P.~Lorenzoni for his useful advice about multidimensional Poisson brackets. I wish to express heartfelt thanks to the anonimous referee for suggesting important improvements both in the structure and in the content of this paper. 

This work is partially supported by the European Research Council Advanced Grant FroM--PDE and by PRIN 2010--11 Grant “Geometric and analytic 
theory of Hamiltonian systems in finite and infinite dimensions” of Italian Ministry of Universities and Researches.
\bibliography{biblio}

\end{document}